\title{Reduction of multisymplectic manifolds}
\author{Casey Blacker}\date{}
\theoremstyle{plain}
\newtheorem{theorem}{Theorem}[section]
\newtheorem*{theorem*}{Theorem}
\newtheorem{lemma}[theorem]{Lemma}
\newtheorem{proposition}[theorem]{Proposition}
\theoremstyle{definition}
\newtheorem{definition}[theorem]{Definition}
\newtheorem{example}[theorem]{Example}
\theoremstyle{remark}
\newtheorem{remark}{Remark}[section]
\let\bibtexi\i
\newcommand{\Z}{\mathbb{Z}}
\newcommand{\R}{\mathbb{R}}
\newcommand{\C}{\mathbb{C}}
\renewcommand{\i}{\mathrm{i}}
\newcommand{\g}{\mathfrak{g}}
\renewcommand{\t}{\mathfrak{t}}
\newcommand{\Hom}{\mathrm{Hom}}
\newcommand{\Ad}{\mathrm{Ad}}
\newcommand{\F}{\mathcal{F}}
\renewcommand{\L}{\mathcal{L}}
\newcommand{\X}{\mathfrak{X}}
\renewcommand{\subset}{\subseteq}
\renewcommand{\d}{\mathrm{d}}
\begin{document}\maketitle

\begin{abstract}
	We extend the Marsden--Weinstein--Meyer symplectic reduction theorem to the setting of multisymplectic manifolds. In this context, we investigate the dependence of the reduced space on the reduction parameters. With respect to a distinguished class of multisymplectic moment maps, an exact stationary phase approximation and nonabelian localization theorem are also obtained.
\end{abstract}

\let\thefootnote\relax
\footnote{\textit{Date.} January 28, 2021}
\footnote{\textit{2020 Mathematics Subject Classification.} 53D05, 53D20, 70S05, 70S10}
\footnote{\textit{Key words and phrases.} multisymplectic geometry, moment maps, Duistermaat--Heckman theorems.}

\tableofcontents


\section{Introduction}

A $k$-plectic structure on a smooth manifold $M$ is a closed $(k+1)$-form $\omega\in\Omega^{k+1}(M)$ which is nondegenerate in the sense that the assignment $X\mapsto\iota_X\omega$ defines an inclusion of vector bundles $TM\hookrightarrow\Lambda^kT^*M$. This extends the familiar construction of a symplectic structure on $M$, constituting the case $k=1$, to the setting of higher degree forms. As symplectic geometry forms the language of classical mechanics, so multisymplectic geometry provides a framework for classical field theories \cite{RyvkinWurzbacher19,Roman-Roy09,deLeondeDiegoSantamariaMerino03}. We may thus characterize multisymplectic manifolds, together with the attendant dynamical formalism, as the mathematical extensions of physical classical field theories.

Recall that, while the cotangent bundle $T^*M$ of a smooth manifold $M$ exhibits a canonical symplectic structure \cite{Marsden92}, it is not always the case that $M$ itself admits a symplectic form. In contrast,  when $\dim M\geq 7$ and $3\leq k\leq \dim M-2$, the $k$-plectic structures on $M$ are generic in the space of closed $(k+1)$-forms \cite[Theorem 2.2]{Martinet70} (see also the discussion around \cite[Theorem 3.11]{RyvkinWurzbacher19}). It is thus natural to anticipate substantially weaker results in the general multisymplectic setting as compared with that of the symplectic. Indeed, this will prove to be the case with regard to the failure of the multisymplectic reduction procedure to ensure the nondegeneracy of the reduced form, as described below.

Before outlining the structure of this paper, we first recall the reduction theorem in the original symplectic setting. We refer to \cite[Chapter 2]{Marsden92} for a more thorough review of this material according to the motivating physical perspective.

The smooth functions $f\in C^\infty(M)$ on a symplectic manifold $(M,\omega)$ individually encode infinitesimal symmetries $X\in\X(M)$, $\L_X\omega=0$, according to the relation $\d f=\iota_X\omega$. When a family of smooth functions $(\mu_\xi)_{\xi\in\g}$, linear in $\xi\in\g$, determines in this manner the fundamental vector fields $\underline\xi\in\X(M)$ of the action of a Lie group $G$ on $(M,\omega)$, the action of $G$ is said to be a (weakly) \emph{Hamiltonian action} and the assignment $\mu:M\to\g^*$, encoding the functions $(\mu_\xi)_{\xi\in\g}$ through pointwise contraction $\mu_\xi=\langle\mu,\xi\rangle$, is called an associated (weak) \emph{moment map}. We may remove the designation ``weak'' by imposing a Lie algebra homomorphism condition on the assignment $\xi\mapsto\mu_\xi$ with respect to a natural Lie bracket on the space of smooth functions $C^\infty(M)$ associated to the symplectic structure $\omega$.

The symplectic reduction theorem, due independently to Marsden--Weinstein \cite{MarsdenWeinstein74} and Meyer \cite{Meyer73}, exploits this interaction between functions and symmetries to systematically remove certain degrees of freedom on a symplectic manifold $(M,\omega)$ in the presence of a Hamiltonian action of $G$ and an associated moment map $\mu:M\to\g^*$. Here we present the formulation of Marsden and Weinstein.

\begin{theorem*}[Marsden--Weinstein]
	Let $G$ be a Lie group acting symplectically on the symplectic manifold $(M,\omega)$. Let $\mu:M\to\g^*$ be a moment map for the action, and $\lambda\in\g^*$ a regular value of $\lambda$. Suppose that $G_\lambda$ acts freely and properly on the manifold $\mu^{-1}(\lambda)$. Then if $i_\lambda:\mu^{-1}(\lambda)\to M$ is the inclusion, there is a unique symplectic structure $\omega_\lambda$ on the reduced phase space $M_\lambda$ such that $\pi_\lambda^*\omega_\lambda=i_\lambda^*\omega$ where $\pi_\lambda$ is the projection of $\mu^{-1}(\lambda)$ onto $M_\lambda$.
\end{theorem*}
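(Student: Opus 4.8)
The plan is to construct $M_\lambda$ as the quotient $\mu^{-1}(\lambda)/G_\lambda$, descend the restricted form $i_\lambda^*\omega$ through the projection $\pi_\lambda$, and then verify that the descended form is symplectic. Throughout, write $Z=\mu^{-1}(\lambda)$ and fix a point $z\in Z$. First I would establish the smooth structures: since $\lambda$ is a regular value of $\mu$, the level set $Z$ is an embedded submanifold of $M$ with $T_zZ=\ker\d\mu_z$. Equivariance of the moment map, $\mu(g\cdot z)=\Ad^*_{g^{-1}}\mu(z)$, shows that $G_\lambda$ preserves $Z$ because each $g\in G_\lambda$ fixes $\lambda$, so the hypothesis that $G_\lambda$ acts freely and properly on $Z$ makes $M_\lambda=Z/G_\lambda$ a smooth manifold and $\pi_\lambda\colon Z\to M_\lambda$ a surjective submersion, indeed a principal $G_\lambda$-bundle.

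The heart of the argument is a pointwise computation in symplectic linear algebra. From the defining relation $\d\mu_\xi=\iota_{\underline\xi}\omega$ one reads off that $\ker\d\mu_z$ is the $\omega_z$-orthogonal complement of the orbit tangent space $T_z(G\cdot z)=\{\underline\xi_z:\xi\in\g\}$; that is, $T_zZ=(T_z(G\cdot z))^{\omega_z}$. I would also record the identification $T_z(G_\lambda\cdot z)=T_z(G\cdot z)\cap T_zZ$, which expresses the $G_\lambda$-orbit directions as precisely those orbit directions tangent to the level set.

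Next I would descend the form. The restriction $i_\lambda^*\omega$ is $G_\lambda$-invariant because $\omega$ is $G$-invariant and $G_\lambda$ preserves $Z$; and it is horizontal for $\pi_\lambda$ since, for any $\xi\in\g_\lambda$ and any $v\in T_zZ$, we have $(i_\lambda^*\omega)(\underline\xi,v)=\d\mu_\xi(v)=0$ as $v\in\ker\d\mu_z$. A basic form descends uniquely through the submersion $\pi_\lambda$, yielding the unique $\omega_\lambda$ with $\pi_\lambda^*\omega_\lambda=i_\lambda^*\omega$. Uniqueness and closedness both follow from the injectivity of $\pi_\lambda^*$ on forms, since $\pi_\lambda^*\d\omega_\lambda=\d i_\lambda^*\omega=i_\lambda^*\d\omega=0$.

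The main obstacle is nondegeneracy of $\omega_\lambda$, and this is where the symplectic structure of $\omega$ is essential. Under the identification $T_{[z]}M_\lambda\cong T_zZ/T_z(G_\lambda\cdot z)$, suppose $v\in T_zZ$ satisfies $\omega_z(v,w)=0$ for all $w\in T_zZ$, so that $v\in(T_zZ)^{\omega_z}$. Applying the orthogonal-complement identity twice gives $(T_zZ)^{\omega_z}=((T_z(G\cdot z))^{\omega_z})^{\omega_z}=T_z(G\cdot z)$, whence $v\in T_zZ\cap T_z(G\cdot z)=T_z(G_\lambda\cdot z)$ and therefore $[v]=0$. This double-orthogonal step, which uses nondegeneracy of $\omega$ to equate a subspace with its double $\omega$-perp, is exactly the ingredient with no counterpart in the higher multisymplectic setting, and I expect it to be the crux of the proof as well as the locus of the anticipated failure of nondegeneracy for general $k$.
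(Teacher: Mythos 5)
Your proof is correct, and it is essentially the argument the paper itself relies on: the paper does not prove this classical theorem directly (it is quoted as background, with citations to Marsden--Weinstein and Meyer), but the proof of its multisymplectic generalization, Theorem \ref{thm:reduction}, follows exactly your descent scheme --- equivariance of $\mu$ gives $G_\lambda$-invariance of the level set, horizontality follows from $\iota_{\underline\xi}\, i_\lambda^*\omega = i_\lambda^*\d\mu_\xi = 0$, and uniqueness and closedness follow from injectivity of $\pi_\lambda^*$. Your final double-orthogonal-complement step, $(T_zZ)^{\omega_z} = \bigl((T_z(G\cdot z))^{\omega_z}\bigr)^{\omega_z} = T_z(G\cdot z)$, is precisely the symplectic-specific ingredient that the paper singles out as having no counterpart for $k>1$: it is the reason the reduced form in Theorem \ref{thm:reduction} is only claimed to be closed, and may in general be degenerate.
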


The symplectic reduction theorem has been adapted to the settings of contact structures \cite{deLeonLainzValcazar19,Willett02}, cosymplectic manifolds \cite{Albert89}, polysymplectic manifolds \cite{MarreroRoman-RoySalgadoVilarino15}, certain higher Poisson structures\cite{BursztynMartinezAlbaRubio19}, Courant algebroids\cite{BursztynCavalcantiGualtieri07}, and quasi-Hamiltonian $G$-spaces\cite{AlekseevMalkinMeinrenken98}. Reduction schemes have also appeared in the multisymplectic approach to classical field theory \cite{Sniatycki04,MarsdenMontgomeryMorrisonThompson86,CastrillonLopezRatiuShkoller00}. Reduction of general multisymplectic manifolds is discussed in \cite[p.\ xxiv]{OrtegaRatiu04} and \cite{MarsdenWeinstein01}, and the topic is treated in depth in \cite{Echeverria-EnriquezMunoz-LecandaRoman-Roy18}.

The aim of this paper is to extend the Marsden--Weinstein--Meyers symplectic reduction theorem to the multisymplectic setting, and to investigate the dependence of the reduced space on the reduction parameters.

We begin in Section \ref{sec:manifolds} with a review of the theory of multisymplectic manifolds, broadly following the exposition of Ryvkin and Wurzbacher \cite{RyvkinWurzbacher19} and the earlier paper of Cantrijn, Ibort, and de Le{\'o}n \cite{CantrijnIbortdeLeon99}. The main idea for our purposes is that certain $(k-1)$-forms $\alpha\in\Omega^{k-1}(M)$ determine multisymplectic symmetries $X\in\X(M)$, $\L_X\omega=0$, in a manner analogous to the symplectic case: namely, $\d\alpha=\iota_X\omega$.

A key difference between our treatment and the usual conventions is that we define the $k$-plectic \emph{bracket} on the space of Hamiltonian $(k-1)$-forms $\Omega^{k-1}(M)$ by means of the Lie derivative, $\{\alpha,\beta\}=\L_{X_\alpha}\beta$, instead of the interior derivative, $\{\alpha,\beta\}'=\iota_{X_\alpha}\d\beta$. The difference, an exact term $\d\iota_{X_\alpha}\beta$, vanishes in the symplectic case. With respect to our conventions, the space of Hamiltonian forms $\Omega_H^{k-1}(M)$ inherits the structure of a \emph{Leibniz algebra}, also called a \emph{Loday algebra}, which are defined in \cite{LodayPirashvili93} to consist of a vector space $V$ and a bilinear mapping $[\,,]:V\times V\to V$ satisfying the Jacobi identity but not necessarily antisymmetric. The usual multisymplectic bracket $\{\,,\}'$, on the other hand, is bilinear and antisymmetric, but does not satisfy the Jacobi identity. Both constructions $\{\,,\}$ and $\{\,,\}'$ descend to identical Lie brackets on the quotient of the space of Hamiltonian forms by the exact forms $\Omega_H^{k-1}(M)/\mathrm{im}\,\d_{k-2}$.

In Section \ref{sec:Hamiltonian_systems} we present a theory of multisymplectic moment maps and Hamiltonian actions. Our construction is similar to the \emph{covariant momentum map} of classical field theory \cite{GotayIsenbergMarsdenMontgomery1999} and is broadly consistent with the framework presented by Echeverr\'{\bibtexi}a-Enr\'{\bibtexi}quez, Mu\~{n}oz-Lecanda, and Rom\'{a}n-Roy \cite{Echeverria-EnriquezMunoz-LecandaRoman-Roy18}. We consider the $k$-plectic moment map as a $\g^*$-valued differential form $\mu\in\Omega^{k-1}(M,\g^*)$ encoding a $\g$-parameterized family of Hamiltonian $(k-1)$-forms $\mu_\xi=\langle\mu,\xi\rangle\in\Omega^{k-1}(M)$ which collectively generate the action of a group of symmetries $G$, subject to a certain Leibniz algebra compatibility condition.

Here we introduce the particularly tractable class of \emph{split moment maps}: those that factor as $\mu=\nu\wedge\eta\in\Omega^{k-1}(M)$, where $\nu:M\to\g^*$ is a smooth function and $\eta\in\Omega^{k-1}(M)$ is a closed form. Informally, we consider $\nu$ as analogous to a symplectic $\g^*$-valued moment map, and view $\eta$ as an auxiliary form. The justification for this construction is twofold: first, it provides a multisymplectic environment in which the resemblance with the symplectic formalism is more transparent; second, imposes a minimal condition under which multisymplectic $G$-spaces may be studied by means of a vector-valued map $\nu:M\to\g^*$. Indeed, this class of moment maps is both convenient to work with and provides an approach by which the statements of many classical results of symplectic geometry, in terms of $\g^*$-valued functions, may be interpreted in the multisymplectic context.

In Section \ref{sec:reduction} we present our first main result:

\setcounter{section}{4}
\setcounter{theorem}{0}
\begin{theorem}[Multisymplectic reduction]
	Let $(M,\omega,G,\mu)$ be a $k$-plectic Hamiltonian $G$-space with moment map $\mu$, let $\phi\in\Omega^{k-1}(M,\g^*)$ be a closed form, and let $M_\phi= \mu^{-1}(\phi)/G_\phi$. If $\mu^{-1}(\phi)\subset M$ is an embedded submanifold and $G$ acts freely on $\mu^{-1}(\phi)$, then there is a unique, closed $\omega_\phi\in\Omega^{k+1}(M_\phi)$ satisfying $i^*\omega = \pi^*\omega_\phi$, where $i:\mu^{-1}(\phi)\to M$ is the inclusion and $\pi:\mu^{-1}(\phi)\to M_\phi$ is the quotient map.
	\begin{center}
		\begin{tikzpicture}
			\node (A) at (0,0) {$\mu^{-1}(\phi)$};
			\node (B) at (1.8,0) {$M$};
			\node (C) at (0,-1.8) {$M_\phi$};

			\draw[->] (A) to node[above] {$i$} (B);
			\draw[->] (A) to node[left] {$\pi$} (C);
		\end{tikzpicture}
	\end{center}
\end{theorem}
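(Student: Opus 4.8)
The plan is to run the standard descent argument of symplectic reduction, adapted so that the only conclusion drawn is closedness of the reduced form—no nondegeneracy is claimed, which is exactly the feature the introduction warns must fail in the multisymplectic setting. I take for granted that the stated hypotheses, together with the action being proper, make $\pi:\mu^{-1}(\phi)\to M_\phi$ a principal $G_\phi$-bundle, in particular a surjective submersion; this is what permits forms to descend and makes $\pi^*$ injective on forms. The entire problem then reduces to showing that $i^*\omega\in\Omega^{k+1}(\mu^{-1}(\phi))$ is \emph{basic} for this bundle, that is, both horizontal and $G_\phi$-invariant, whence it equals $\pi^*$ of a unique form $\omega_\phi$ on $M_\phi$.

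First I would record the two facts that single out $G_\phi$ as the correct reduction group. By equivariance of the moment map (Section 3), $g\cdot\mu^{-1}(\phi)=\mu^{-1}(g\cdot\phi)$, so the stabilizer $G_\phi$ of $\phi$ preserves the level set and its fundamental vector fields $\underline\xi$, $\xi\in\g_\phi$, are tangent to $\mu^{-1}(\phi)$. Moreover, since $G$ acts by multisymplectomorphisms, $\omega$ is $G$-invariant, hence $i^*\omega$ is $G_\phi$-invariant. Because the action is free, the vertical distribution of $\pi$ is spanned pointwise by the $\underline\xi$ with $\xi\in\g_\phi$, of dimension $\dim G_\phi$; so invariance supplies one of the two basic conditions directly.

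The crux is horizontality. For $\xi\in\g_\phi$, tangency of $\underline\xi$ lets me commute pullback past contraction and then invoke the moment map condition $\iota_{\underline\xi}\omega=\d\mu_\xi$:
\[
	\iota_{\underline\xi}(i^*\omega)=i^*(\iota_{\underline\xi}\omega)=i^*\d\mu_\xi=\d\,i^*\langle\mu,\xi\rangle.
\]
Since $\mu$ and $\phi$ agree pointwise on $\mu^{-1}(\phi)$ and pullback is pointwise, $i^*\mu=i^*\phi$, so the right-hand side equals $\d\,i^*\langle\phi,\xi\rangle=i^*\langle\d\phi,\xi\rangle$, which vanishes precisely because $\phi$ is closed. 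This is the single place the closedness hypothesis on $\phi$ enters, and it is the multisymplectic analogue of the fact that a constant $\lambda\in\g^*$ is automatically annihilated by $\d$ in the symplectic theorem. With both basic conditions in hand, the basic-form correspondence for principal bundles yields the unique $\omega_\phi$ with $\pi^*\omega_\phi=i^*\omega$, and uniqueness is immediate from injectivity of $\pi^*$.

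Closedness is then a one-line consequence of the construction: $\pi^*\d\omega_\phi=\d\pi^*\omega_\phi=\d\,i^*\omega=i^*\d\omega=0$ because $\omega$ is $k$-plectic, and injectivity of $\pi^*$ forces $\d\omega_\phi=0$. I expect the main obstacle to be not any single computation but the bookkeeping around the level set: pinning down the action of $G$ on $\g^*$-valued $(k-1)$-forms that defines $G_\phi$, deducing $g\cdot\mu^{-1}(\phi)=\mu^{-1}(g\cdot\phi)$ and the resulting tangency of the $\underline\xi$, and justifying $i^*\mu=i^*\phi$ at the level of forms rather than of pointwise values. Once those are secured, the descent is formally identical to the symplectic case, with nondegeneracy deliberately omitted.
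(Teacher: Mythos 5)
Your proposal is correct and follows essentially the same route as the paper's proof: horizontality of $i^*\omega$ via $\iota_\xi i^*\omega = i^*\d\mu_\xi = i^*\d\phi_\xi = 0$ for $\xi\in\g_\phi$ (this being where closedness of $\phi$ enters), invariance from the multisymplectic action, descent along the smooth quotient, and uniqueness plus closedness of $\omega_\phi$ from injectivity of $\pi^*$. The only cosmetic difference is that you phrase the descent through the basic-forms correspondence for the principal $G_\phi$-bundle, while the paper invokes the smoothness of the quotient map directly.
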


In contrast to the symplectic case, the reduced form $\omega_\phi\in\Omega^{k-1}(M_\phi)$ may be degenerate. This property is descriptive of the polysymplectic case as well \cite{MarreroRoman-RoySalgadoVilarino15}. Additionally, we describe the reduction of forms and vector fields in Theorem \ref{thm:reduction_of_dynamics}, and obtain some further results on the split Hamiltonian case in Proposition \ref{prop:split_moment_map_free_action}.

In Section \ref{sec:DH} we investigate the dependence of the reduced space $(M_\phi,\omega_\phi)$ on the closed form $\phi\in\Omega(M,\g^*)$. Our guiding model is the foundational result of Duistermaat and Heckman \cite{DuistermaatHeckman82}.

\begin{theorem*}[Duistermaat--Heckman]
	Suppose that $(M,\omega)$ is a symplectic manifold equipped with the Hamiltonian action of a torus $T$, and that $\mu:M\to\t^*$ is an associated moment map. If $\lambda,\lambda_0\in\mathfrak{t}^*$ lie in the same connected component $C$ of the set of regular values of $\mu$, then
	\[
		[\omega_\lambda] = [\omega_{\lambda_0}] + \langle c,\lambda-\lambda_0\rangle
	\]
	where $c\in H^2(M_\lambda,\t)$ denotes the (common) Chern class of the fibrations $q_\lambda:\mu^{-1}(\lambda)\to M_\lambda$, $\lambda\in C$, and we have used the canonical identification of the $H^2(M_\lambda,\R)$ along any $\lambda$-path in $C$ from $\lambda_0$ to $\lambda$.
\end{theorem*}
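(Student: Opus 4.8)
The plan is to prove the stronger infinitesimal assertion that $\lambda\mapsto[\omega_\lambda]$ is affine with constant derivative $c$, and then to integrate along a path in $C$. Fix $\lambda_0\in C$ and abbreviate $Z=\mu^{-1}(\lambda_0)$ and $B=M_{\lambda_0}$. Since $\lambda_0$ is a regular value and $T$ is abelian (so $T_\lambda=T$), the projection $q_{\lambda_0}:Z\to B$ is a principal $T$-bundle, and by the Ehresmann fibration theorem the level sets $\mu^{-1}(\lambda)$, and hence the reduced spaces $M_\lambda$, assemble into a locally trivial fibre bundle over $C$. This furnishes both the canonical identifications $H^2(M_\lambda,\R)\cong H^2(B,\R)$ used in the statement and the fact that the Chern class $c$ is locally constant, hence common. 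It therefore suffices to prove the formula for $\lambda$ in a neighborhood of $\lambda_0$.

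The heart of the argument is a connection-based normal form. First I would choose a $T$-invariant connection one-form $\theta\in\Omega^1(Z,\t)$ on $Z\to B$. Because $T$ is abelian its curvature is basic, $\d\theta=q_{\lambda_0}^*\beta$ for a closed $\beta\in\Omega^2(B,\t)$ whose class represents the Chern class $c$ (with the normalization fixing the signs below). On the model $Z\times\t^*$, writing $t$ for the tautological $\t^*$-valued coordinate and $p:Z\times\t^*\to Z$ for the projection, I would introduce the \emph{Duistermaat--Heckman form}
\[
	\tilde\omega = p^*(i^*\omega) + \d\langle t,\theta\rangle,
\]
where $i:Z\hookrightarrow M$ is the inclusion and $\langle t,\theta\rangle$ denotes the pairing of the function $t$ with the pullback of $\theta$. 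A direct check shows $\tilde\omega$ is closed and $T$-invariant, that it is nondegenerate near $Z\times\{0\}$ (the wedge term $\langle\d t\wedge\theta\rangle$ pairs the $\t^*$-directions nondegenerately against the fibre directions of $Z\to B$, which are exactly the degenerate subspace of $i^*\omega$), and that $(z,t)\mapsto\lambda_0+t$ is, up to a sign, a moment map for the residual $T$-action. The equivariant coisotropic embedding theorem then identifies a $T$-invariant neighborhood of $Z$ in $(M,\omega)$ with a neighborhood of $Z\times\{0\}$ in $(Z\times\t^*,\tilde\omega)$, intertwining the two moment maps.

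With this model in hand the computation is immediate. For small $t$, the level set $\mu^{-1}(\lambda_0+t)$ corresponds to $Z\times\{t\}$, on which $\tilde\omega$ restricts to $i^*\omega+\langle t,\d\theta\rangle=q_{\lambda_0}^*\bigl(\omega_{\lambda_0}+\langle t,\beta\rangle\bigr)$. By uniqueness of the reduced form, the reduced symplectic structure at $\lambda_0+t$ is $\omega_{\lambda_0}+\langle t,\beta\rangle$ under the model identification, whence $[\omega_{\lambda_0+t}]=[\omega_{\lambda_0}]+\langle c,t\rangle$. This is the asserted formula for $\lambda$ near $\lambda_0$; since the derivative $c$ depends only on the class of the connection and is locally constant on $C$, integrating along a $\lambda$-path in $C$ and composing the canonical identifications of $H^2(M_\lambda,\R)$ yields the global statement.

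The main obstacle I anticipate is the equivariant coisotropic embedding step, which is a $T$-equivariant Moser argument comparing $\omega$ with $\tilde\omega$ along $Z$: one must produce the interpolating isotopy equivariantly, control it on a neighborhood, and verify that the two moment maps agree so that the reduced forms genuinely correspond under the identification. The remaining delicate point is purely a matter of bookkeeping — confirming that the identifications of $H^2(M_\lambda,\R)$ supplied by parallel transport in the Ehresmann fibration over $C$ are precisely those making $c$ and the endpoint classes comparable, so that the local affine formula globalizes consistently along any $\lambda$-path.
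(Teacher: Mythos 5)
Your argument is correct, but it follows a genuinely different route from the one this paper is built on. The paper does not reprove the theorem---it is quoted from \cite{DuistermaatHeckman82}---and the argument it adopts and generalizes in Section \ref{sec:DH} is the original variational one: trivialize the family of level sets $\mu^{-1}(\ell)\to\ell$ over a segment of regular values, differentiate the reduced form in that trivialization to obtain $\pi^*\partial_\psi\omega_\phi = \d\hspace{1pt}i^*\iota_{\tilde\psi}\omega$ (Lemma \ref{lem:general_variation}, which is exactly Equation 2.3 of \cite{DuistermaatHeckman82}), and then observe that in the symplectic torus case $i^*\iota_{\tilde\psi}\omega$ is a connection $1$-form on the $T$-bundle $\mu^{-1}(\lambda)\to M_\lambda$, so the derivative of $[\omega_\lambda]$ is a curvature class representing $\langle c,\cdot\rangle$, independent of all choices. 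You instead take the equivariant coisotropic embedding theorem as the key lemma, constructing the model $\bigl(Z\times\t^*,\,p^*i^*\omega+\d\langle t,\theta\rangle\bigr)$ and reading off the affine dependence from the restriction to $Z\times\{t\}$. Your normal form buys strictly more than is asked: it identifies $\omega$ itself near the level set, making the affine formula an identity of forms in the model rather than of derivatives of cohomology classes. The variational route buys economy and portability: it requires no Moser-type argument, only the differentiation identity plus the connection interpretation, and for precisely this reason it is the approach that survives the passage to the multisymplectic setting in Theorem \ref{thm:multisymplectic_variation}, where no Darboux- or coisotropic-embedding-type normal form exists and where the trivialization hypothesis and the strongly conjugate distribution $\underline\t^*$ are imposed by hand to play the role your model supplies automatically in the symplectic case. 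Two small points to tighten in your write-up: the Ehresmann step requires $\mu$ to be proper over $C$ (automatic when $M$ is compact, as in \cite{DuistermaatHeckman82}), and a regular value only guarantees a locally free $T$-action, so the principal-bundle and Chern-class assertions need freeness or the orbifold (V-manifold) formulation that Duistermaat and Heckman use.
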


Specifically, our aim is to describe the variation of the reduced form $\omega_\phi\in\Omega^{k+1}(M_\phi)$ with respect to infinitesimal variations of the closed parameter $\phi\in\Omega^{k-1}(M,\g^*)$, subject to certain assumptions on the structure of $(M,\omega)$ in the vicinity of $\mu^{-1}(\phi)$. To this end, we introduce the notion of \emph{conjugate distributions} on a multisymplectic manifold, in terms of which we present our second main result:

\setcounter{section}{5}
\setcounter{theorem}{5}
\begin{theorem}[Variation of the multisymplectic reduced space]
	Let $T$ be a torus, let $(M,\omega,T,\mu)$ be a $k$-plectic Hamiltonian $T$-space, fix a $T$-invariant closed form $\phi\in\Omega^{k-1}(M,\t^*)$ such that $\mu^{-1}(\phi)\subset M$ is an embedded submanifold on which $T$ acts freely, choose an open subset $C\subset\t^*$, let $\eta\in\Omega^{k-1}(M)$ be $T$-invariant, and write $P=C\wedge\eta+\phi$. If
	\begin{enumerate}[i.]
		\item the diagram
			\begin{center}
				\begin{tikzpicture}
					\node (A) at (0,0) {$\mu^{-1}(P)$};
					\node (B) at (2.5,0) {$\mu^{-1}(\phi) \times P$};
					\node (C) at (0,-1.8) {$P$};

					\draw[->] (A) to node[above] {$\sim$} (B);
					\draw[->] (A) to node[left] {$\mu$} (C);
					\draw[->] (B) to node[below right] {$\pi_2$} (C);
				\end{tikzpicture}
			\end{center}
			is a trivialization of a family of $T$-principal bundles modeled on $\mu^{-1}(\phi)$, and
		\item the fundamental distribution $\underline\t$ is strongly conjugate to a distribution $\underline\t^*\subset TM$ with respect to $\eta$,
	\end{enumerate}
	then, 
	\[
		\partial_\lambda\, [\omega_\psi] = \langle c,\lambda\rangle \wedge [\eta_\psi],		\hspace{1.5cm}\lambda\in C,\;\psi\in P
	\]
	where $c\in\Omega^2(M_\phi,\t)$ is the Chern class of the model space $\mu^{-1}(\phi)\to M_\phi$.
\end{theorem}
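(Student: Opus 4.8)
The plan is to mimic the classical Duistermaat–Heckman argument, replacing the single symplectic $2$-form with the $(k+1)$-form $\omega$ and carefully tracking where the auxiliary factor $\eta$ intervenes. First I would exploit hypothesis (i) to fix, once and for all, an identification $\mu^{-1}(P)\cong\mu^{-1}(\phi)\times P$ of $T$-principal bundles over $P$, so that all the fibers $\mu^{-1}(\psi)$, $\psi\in P$, are identified with the single model space $\mu^{-1}(\phi)$ and the reduced spaces $M_\psi$ with the fixed reduced space $M_\phi$. Under this identification the family $\{\omega_\psi\}_{\psi\in P}$ becomes a $P$-parameterized family of closed $(k+1)$-forms on the \emph{fixed} manifold $M_\phi$, and the derivative $\partial_\lambda[\omega_\psi]$ acquires an unambiguous meaning in the fixed cohomology $H^{k+1}(M_\phi,\R)$. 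I would then choose a $T$-invariant connection $\theta\in\Omega^1(\mu^{-1}(\phi),\t)$ on the model bundle $\mu^{-1}(\phi)\to M_\phi$, whose curvature represents the Chern class $c\in\Omega^2(M_\phi,\t)$.

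Next I would compute the derivative directly. Writing $P=C\wedge\eta+\phi$ and recalling $\mu_\xi=\langle\mu,\xi\rangle$ with $\d\mu_\xi=\iota_{\underline\xi}\omega$, the moment map condition on the submanifold $\mu^{-1}(\psi)$ for $\psi=\lambda\wedge\eta+\phi$ pins down the pullback $i_\psi^*\omega$ in terms of $\lambda$ and $\eta$. The key computation is to differentiate the defining relation $i_\psi^*\omega=\pi^*\omega_\psi$ in the direction $\lambda\in C\subset\t^*$. Here is where hypothesis (ii) enters decisively: the strong conjugacy of the fundamental distribution $\underline\t$ to $\underline\t^*$ with respect to $\eta$ is precisely the structural condition that lets me split the variation of $i_\psi^*\omega$ into a horizontal piece, which descends to $M_\phi$, and a vertical piece proportional to $\d\langle\lambda,\theta\rangle\wedge\eta_\psi$. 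Pulling this through the connection and applying the Chern–Weil identification of $[\d\theta]$ with $c$, the vertical contribution should yield exactly $\langle c,\lambda\rangle\wedge[\eta_\psi]$, while the horizontal contribution is $\lambda$-independent and so drops out under $\partial_\lambda$. Integrating this infinitesimal statement over a path in $C$ recovers the affine dependence, in direct analogy with the symplectic theorem.

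The main obstacle I anticipate is making the splitting in the previous paragraph both rigorous and canonical. In the symplectic case the normal geometry of $\mu^{-1}(\lambda)$ is controlled by the nondegeneracy of $\omega$, which furnishes a clean horizontal–vertical decomposition of $T\mu^{-1}(\lambda)$ relative to the orbit directions; in the multisymplectic setting $\omega$ may be degenerate, and this is exactly the failure flagged after the reduction theorem. The notion of conjugate distributions is evidently introduced to supply, by hand, the structure that nondegeneracy would otherwise provide for free, so the heart of the proof is verifying that strong conjugacy with respect to $\eta$ is strong enough to (a) guarantee that the relevant decomposition exists and is $T$-invariant, and (b) ensure that the curvature term appears wedged against $\eta_\psi$ rather than against some uncontrolled residual form. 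A secondary subtlety is checking that $[\eta_\psi]$ and $c$ are genuinely constant over $P$ under the trivialization of (i), so that the right-hand side depends on $\psi$ only through the stated data; this is the analogue of the ``canonical identification of cohomology along a $\lambda$-path'' invoked in the classical statement, and I would establish it by a Moser-type or homotopy argument using the $T$-invariance of $\eta$ and $\phi$.
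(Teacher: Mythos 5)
Your skeleton coincides with the paper's: use hypothesis (i) to make $\partial_\lambda[\omega_\psi]$ meaningful on the fixed space $M_\phi$, differentiate the defining relation $i_\psi^*\omega=\pi^*\omega_\psi$ along the parameter direction (this is exactly Lemma \ref{lem:general_variation}, which gives $\pi^*\partial_\psi\omega_\phi=\d\,i^*\iota_{\tilde\psi}\omega$), and recognize the result as a curvature term wedged with $[\eta_\psi]$ via Chern--Weil. But there is a genuine gap precisely where you defer to ``the heart of the proof'': you never say how strong conjugacy produces the splitting, and the one concrete step you do commit to --- fix an \emph{arbitrary} $T$-invariant connection $\theta$ first, then split the variation relative to it --- is backwards and would fail. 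For a generic invariant connection there is no identity relating $\iota_\lambda\omega$ to $\theta$ and $\eta$, so the variation $\d\,i^*\iota_\lambda\omega$ cannot be ``pulled through'' it. The paper's Lemma \ref{lem:multisymplectic_variation_main_lemma} instead \emph{constructs} the connection from the conjugacy data: strong conjugacy supplies a $2$-form $\sigma\in\Omega^2(M)$ with $\iota_Y\iota_X\omega=\sigma(X,Y)\,\eta$ for $X\in\underline\t$, $Y\in\underline\t^*$; after averaging $\sigma$ over the compact torus one sets $\alpha_\lambda=\iota_\lambda\sigma$ (contraction with the vector field $\underline\lambda\in\underline\t^*$ attached to $\lambda$ by the conjugation), and this specific $\alpha\in\Omega^1(M,\t)$ satisfies the two identities on which everything rests: $\iota_\lambda\omega=\alpha_\lambda\wedge\eta$ and $\alpha(\underline\xi)=-\xi$. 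The second makes $i^*\alpha$ a principal connection on $\mu^{-1}(\phi)\to M_\phi$; the first, together with $\d\eta=0$, yields the exact (not merely cohomological) computation
\[
	\d\, i^*\iota_\lambda\omega \;=\; \d\, i^*(\alpha_\lambda\wedge\eta) \;=\; \langle \d\, i^*\alpha,\lambda\rangle\wedge i^*\eta \;=\; \pi^*\big(\langle F_\alpha,\lambda\rangle\wedge\eta_\phi\big),
\]
which settles both of your anticipated obstacles (a) and (b) at once. Without this construction your argument does not close.

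Two smaller points. The descent of $\eta$ to a closed form $\eta_\phi$ on $M_\phi$ needs no Moser-type or homotopy argument: along $\mu^{-1}(\phi)$ the form $\eta$ lies in the image of the conjugation pairing, hence is horizontal, and it is $T$-invariant by hypothesis, so it descends directly (Lemma \ref{lem:multisymplectic_variation_main_lemma}, part iii). Likewise the constancy of $c$ over $P$ is immediate, since the Chern class is an invariant of the $T$-principal bundle and hypothesis (i) identifies every bundle $\mu^{-1}(\psi)\to M_\psi$ with the fixed model $\mu^{-1}(\phi)\to M_\phi$; this is all the paper needs in the proof of Theorem \ref{thm:multisymplectic_variation}. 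Finally, no integration over a path in $C$ is required: the theorem asserts only the infinitesimal identity $\partial_\lambda[\omega_\psi]=\langle c,\lambda\rangle\wedge[\eta_\psi]$.
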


In Section \ref{sec:localization} we return to the setting of split moment maps and exhibit an exact stationary phase approximation and a nonabelian localization theorem. The statements and proofs follow easily by emulating the symplectic case. Such an approach may prove fruitful in extending further symplectic results to the multisymplectic setting.

We conclude in Section \ref{sec:outlook} with some ideas for further development. Specifically, we consider stronger and weaker reduction theorems, further analysis of split moment maps, interactions with more sophisticated multisymplectic realizations of the symplectic moment map, applications to infinite-dimensional symplectic manifolds, and multisymplectic quantization.

Throughout the text we frequently invoke a closed form $\eta\in\Omega^{k-1}(M)$. In every instance the symplectic case is retrieved by taking $k=1$ and $\eta=1\in\Omega^0(M)$.

\vspace{.5cm}\noindent
\textbf{Notation and Conventions.} All manifolds $M$ are assumed to be $C^\infty$ and all Lie groups $G$ are assumed to be compact. In particular, all actions are proper. Given an action of $G$ on $M$, we define the fundamental vector field associated to $\xi\in\g$ at each point $x\in M$ by $\underline\xi_x  = \frac{\d}{\d t}\hspace{1pt}e^{-t\xi}\hspace{1pt} x \hspace{1pt}\big|_{t=0}$, so that $\xi\mapsto\underline\xi$ is a Lie algebra homomorphism. We denote the interior product and Lie derivative with respect to $\underline\xi\in\X(M)$ by $\iota_\xi$ and $\L_\xi$, respectively, and adopt a similar convention in Sections \ref{sec:DH} and \ref{sec:localization} with respect to elements $\lambda,\tau\in\g^*$. We denote the natural pairing on $\g^*\otimes\g$ by $\langle\,,\rangle$. For $\phi\in\Omega^*(M,\g^*)$ and $\xi\in\g$ we write $\phi_\xi$ for the pointwise contraction $\langle\phi,\xi\rangle\in\Omega^*(M)$. Thus if $\mu\in\Omega^{k-1}(M,\g^*)$ is a $k$-plectic moment map, then $\tilde\mu:\xi\mapsto\mu_\xi$ is the associated comoment map. A Hamiltonian vector field $X\in\X(M)$ is associated to a Hamiltonian form $\alpha\in\Omega_H^{k-1}(M)$ by the relation $\d\alpha=\iota_X\omega$, and the bracket is defined by $\{\alpha,\beta\}=\L_{X_\alpha}\beta$. In the symplectic case, this is $\{f,h\}=-\omega(X_f,X_h)$. 

\setcounter{section}{1}
\setcounter{theorem}{0}


\section{Multisymplectic manifolds}\label{sec:manifolds}

In this section we present the basic elements of multisymplectic manifolds. We refer to \cite{CantrijnIbortdeLeon99,RyvkinWurzbacher19} for further background on multisymplectic manifolds and to \cite{ForgerPauflerRomer03} for multivector calculus.

Let $M$ be a smooth manifold.

\begin{definition}
	A $(k+1)$-form $\omega\in\Omega^{k+1}(M)$ is said to be a \emph{$k$-plectic structure} on $M$ if it is closed and nondegenerate, in the sense that the map
	\begin{align*}
		\iota\,\omega:	TM	&\to	\Lambda^k\, T^*\! M	\\
				X	&\mapsto	\iota_X\omega
	\end{align*}
	is an inclusion of vector bundles on $M$. A \emph{multisymplectic structure} is a $k$-plectic structure for some $k\geq 1$. 
\end{definition}

If $\omega$ is only known to be closed then we say that $\omega$ is a \emph{premultisymplectic structure} on $M$.

\begin{example}\label{eg:standard_examples_manifolds}
	\begin{enumerate}[i.]
		\item If $(M^{2n},\sigma)$ is a symplectic manifold, then $\sigma^\ell$ is a $(2\ell-1)$-plectic structure on $M$ for $1\leq\ell\leq n$. This class of multisymplectic manifold is investigated in \cite{BarronShafiee19}. 
		\item Let $G$ be a semisimple Lie group and $\langle\,,\rangle$ an $\Ad$-invariant metric on $\g$. For example, we may take $\langle\,,\rangle$ to be the Killing metric. The unique bi-invariant form $\omega\in\Omega^3(G)$ satisfying
			\[
				\omega(X,Y,Z) = \big\langle[X,Y],Z\big\rangle,	\hspace{.8cm}X,Y,Z\in T_1G\cong\g
			\]
			at $1\in G$, is a $2$-plectic structure on $G$. Equivalently, we define
			\[
				\omega = -\langle\d\theta,\theta\rangle = -\langle\d\bar\theta,\bar\theta\rangle,
			\]
			where $\theta,\bar\theta\in\Omega^1(G,\g)$ are the left and right Maurer--Cartan forms on $G$, respectively.
		\item Let $\pi_E:E\to\Sigma$ be a smooth fiber bundle. We will say that an element $\gamma\in\Lambda^kT_x^*E$ is \emph{$\ell$-semihorizontal} if
			\[
				\iota_{v_1}\ldots\iota_{v_{\ell+1}}\gamma = 0
			\]
			for all vertical tangent vectors $v_1,\ldots,v_{\ell+1} \in \ker\d(\pi_E)_x$, and denote by $\pi:\Lambda_\ell^k T^* E\to E$ the bundle of $\ell$-semihorizontal $k$-forms on $E$. The \emph{canonical $k$-form} $\theta\in\Omega^k(\Lambda_\ell^k\hspace{1pt}T^*E)$ is given by
			\[
				\theta_\gamma(X_1,\ldots,X_k) = \gamma(\pi_*X_1,\ldots,\pi_*X_k),	\hspace{.8cm}X_i\in T_\gamma(\Lambda_\ell^k\hspace{1pt}T^*E),
			\]
			and the \emph{canonical $k$-plectic structure} on $\Lambda_\ell^kT^*E$ is defined to be $-\d\theta\in\Omega^k(\Lambda_\ell^k\hspace{1pt}T^*E)$.

			The space of semihorizontal $k$-forms $\Lambda_1^k\hspace{1pt}T^*E$ plays a foundational role in certain approaches to classical field theories \cite{deLeondeDiegoSantamariaMerino03,Roman-Roy09} and in this respect may be considered to represent the fundamental example of a multisymplectic manifold.
	\end{enumerate}
\end{example}

See \cite[Section 3]{RyvkinWurzbacher19} for an abundance of further examples.

Recall that a \emph{$k$-multivector field} on $M$ is a section $X\in\X^k(M)$ of the $k$th exterior power of the tangent bundle $\Lambda^k TM$. The \emph{Schouten bracket} $[\,,]$ is the extension of the Lie bracket from $\X(M)$ to a bilinear form on $\X^*(M)=\Lambda^*TM$ according to the rule that
\[
	[X_1\wedge\ldots\wedge X_k,\,Y_1\wedge\ldots\wedge Y_\ell] = \sum_{i,j} \,(-1)^{i+j}\,[X_i,Y_j]\wedge \hat{X}_i\wedge \hat{Y}_j
\]
where $\hat{X}_i$ designates the expression $X_1\wedge\ldots\wedge X_k$ with the omission of the $i$th term. The interior product and Lie derivative extend to this setting according to the rules
\[
	\iota_X = \iota_{X_k}\ldots\iota_{X_1}
\]
and
\[
	\L_X = \d\iota_X - (-1)^k \,\iota_X\d.
\]
See \cite[Appendix A]{ForgerPauflerRomer03} for more information on the Schouten bracket and for various identities of multivector field operations.

\begin{definition}
	A smooth transformation $\phi:M\to M$ is called a \emph{multisymplectomorphism}, or a \emph{symmetry}, of $(M,\omega)$ if $\phi^*\omega = \omega$. The action of a Lie group $G$ on $M$ is said to be a \emph{multisymplectic action} if it acts by multisymplectomorphisms. A vector field $X\in\X(M)$ is called a \emph{multisymplectic vector field}, or an \emph{infinitesimal symmetry}, if $\L_X\omega=0$.
\end{definition}

If $X\in\X(M)$ is multisymplectic, then the closedness of $\omega$ implies $\d\iota_X\omega = \L_X\omega = 0$, so that $\iota_X\omega$ is locally exact. The case of global exactness is distinguished by the following definition.

\begin{definition}
	Let $\alpha\in\Omega^{k-\ell}(M)$ and $X\in\X^\ell(M)$ for some $\ell\leq k$. If $\d\alpha=\iota_X\omega$, then we say that $\alpha$ is a \emph{Hamiltonian form} for $X$, and that $X$ is a \emph{Hamiltonian multivector field} for $\alpha$.
\end{definition}

We will denote by $\Omega_H^{k-\ell}(M)$ the space of Hamiltonian $(k-\ell)$-forms. Less frequently, we write $\X_H^\ell(M)$ for the space of Hamiltonian $\ell$-vector fields on $M$.

\begin{remark}
\begin{enumerate}
	\item The $(k-1)$-form $\alpha\in\Omega^{k-1}(M)$ is Hamiltonian if and only if $\d\alpha_x$ lies in the image of the inclusion $\iota\,\omega_x:T_xM\hookrightarrow\Lambda^{k-1}T_x^*M$ for every $x\in M$. In this situation, by the nondegeneracy of $\iota\,\omega$, the vector field $X\in\X(M)$ associated to $\alpha$ is uniquely defined, and the preceding discussion shows that $X$ is an infinitesimal symmetry of $(M,\omega)$. The Hamiltonian forms associated to $X$ consists precisely of $\alpha+\beta\in\Omega^{k-1}(M)$ for $\d\beta=0$.
	\item In the symplectic setting, every function is a Hamiltonian $0$-form. For $\ell=1$, a comparison of the rank of $TM$ with that of $\Lambda^k T^*M$ shows this property to be unique to the cases $k=1$ and $k=n-1$, where $n$ is the dimension of $M$.
\end{enumerate}
\end{remark}

\begin{proposition}\label{prop:vanishing_vector_field}
	If $M$ is compact and $\alpha\in\Omega_H^{k-1}(M)$ is proportional to a closed form $\eta\in\Omega^{k-1}(M)$, so that $\alpha = f\eta$ for some $f\in C^\infty(M)$, then the vanishing set of the associated $X\in\X_H(M)$ is nonempty.
\end{proposition}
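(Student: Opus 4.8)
The plan is to exploit the product structure $\alpha = f\eta$ together with the compactness of $M$ to locate a point at which the Hamiltonian condition forces $X$ to vanish. First I would compute $\d\alpha$ directly. Since $\eta$ is closed, the Leibniz rule gives
\[
	\d\alpha = \d(f\eta) = \d f\wedge\eta + f\,\d\eta = \d f\wedge\eta.
\]
Thus the defining relation $\d\alpha = \iota_X\omega$ becomes $\iota_X\omega = \d f\wedge\eta$, an expression that vanishes precisely where $\d f$ does.

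Next I would invoke compactness. As $M$ is compact and $f\in C^\infty(M)$, the function $f$ attains a maximum (say) at some point $x_0\in M$, at which $\d f_{x_0}=0$. Evaluating the previous identity at $x_0$ then yields
\[
	\iota_{X_{x_0}}\omega = (\d f\wedge\eta)_{x_0} = 0.
\]
Finally I would appeal to the nondegeneracy of $\omega$: by definition of a $k$-plectic structure the map $\iota\,\omega\colon T_{x_0}M\to\Lambda^k T_{x_0}^*M$ is injective, so $\iota_{X_{x_0}}\omega = 0$ forces $X_{x_0}=0$. Hence $x_0$ belongs to the vanishing set of $X$, which is therefore nonempty.

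I do not expect a genuine obstacle here; the argument hinges entirely on the observation that the closedness of $\eta$ collapses $\d(f\eta)$ to $\d f\wedge\eta$, after which the existence of a critical point of $f$ (guaranteed by compactness) and the nondegeneracy of $\omega$ complete the proof. The only point meriting care is that a single critical point suffices—any extremum of $f$ will do—so no global information about the level sets of $f$ or the flow of $X$ is required.
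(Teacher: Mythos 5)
Your proposal is correct and follows exactly the paper's argument: the identity $\iota_X\omega = \d f\wedge\eta$ (from the closedness of $\eta$), the existence of a critical point of $f$ by compactness, and the nondegeneracy of $\omega$ forcing $X$ to vanish there. The paper states this in one sentence; you have merely expanded the same steps.
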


\begin{proof}
	The nondegeneracy of $\omega$ and the identity
	\[
		\iota_X \omega = \d\alpha = \d f\wedge\eta
	\]
	together imply that the vanishing set of $X\in \X_H(M)$ includes the critical set of $f$, which is nonempty by the compactness of $M$.
\end{proof}

If $h\in C^\infty(M)$ is a function on a symplectic manifold $(M,\omega)$ then we may consider the triple $(M,\omega,h)$ as an \emph{abstract mechanical system}. The \emph{dynamics}, or \emph{equations of motion}, of $(M,\omega,h)$ is the associated Hamiltonian vector field $X_h\in\X_H(M)$. A \emph{Hamiltonian curve}, or \emph{solution of the equations of motion}, is an immersion $\phi:\R\to M$ such that $(\partial_t\phi)(s)=X_h(\phi(s))$ for all $s\in\R$. Adapting the terminology of \cite{Smale70}, if $G$ acts symplectically on $(M,\omega)$ preserving $h$ then we consider $(M,\omega,G,h)$ to be an \emph{abstract mechanical system with symmetry}. The consideration of the field theoretic case motivates the following definition.

\begin{definition}
	An \emph{abstract field theory} $(M,\omega,h)$ consists of a $k$-plectic manifold $(M,\omega)$ and a Hamiltonian function $h\in C_H^\infty(M)$. The \emph{equations of motion} take the form of a $k$-vector field $X_h\in\X_H^k(M)$. A \emph{Hamiltonian $k$-curve} is an immersion $\phi:\Sigma\to M$ of a $k$-dimensional manifold $\Sigma$ with distinguished section $\partial_\Sigma\in\X^k(\Sigma)$ such that $(\phi_*\partial_\Sigma)(x) = X_h(\phi(x))$ for all $x\in\Sigma$. If the Lie group $G$ acts multisymplectically on $(M,\omega)$ and preserves $h$ then we call $(M,\omega,G,h)$ an \emph{abstract field theory with symmetry}.
\end{definition}

We refer to \cite[Section 2.3]{RyvkinWurzbacher19} and \cite{Helein12} for physical origins of this terminology.

\begin{definition}
	We define the \emph{bracket} $\{\,,\}$ on $\Omega_H^{k-1}(M)$ to be the bilinear map
	\[
		\{\alpha,\beta\} = \L_{X_\alpha}\,\beta
	\]
	for $\alpha,\beta\in\Omega_H^{k-1}(M)$.
\end{definition}

\begin{remark}
	\begin{enumerate}
		\item Note that the bracket is well-defined since the associated Hamiltonian vector field $X_\alpha$ is unique. 
		\item Our definition of $\{\,,\}$ differs from the usual assignment $(\alpha,\beta)\mapsto\iota_{X_\alpha}\d\beta$, which is antisymmetric but which satisfies the Jacobi identity only up to the addition of exact terms. Our choice of bracket will be shown to satisfy the Jacobi identity but is antisymmetric only up to exact terms, as
		\[
			\{\alpha,\beta\} + \{\beta,\alpha\} = \d(\iota_{X_\alpha}\beta + \iota_{X_\beta}\alpha).
		\]
		Both constructions coincide with the Poisson bracket in the symplectic case. See \cite{Rogers12} for a comparison of the two and \cite{CantrijnIbortdeLeon96} for a similar construction.
	\end{enumerate}
\end{remark}

\begin{definition}
	A \emph{Leibniz algebra} $(V,[\,,])$ consists of a vector space $V$ and a bilinear map $[\,,]:V\times V\to V$ which satisfies the Jacobi identity.
\end{definition}

\begin{lemma}
	The bracket $\{\,,\}$ preserves $\Omega_H^{k-1}(M)$ and endows it with the structure of a Leibniz algebra. Moreover, the pushforward of $\{\,,\}$ under the assignment of Hamiltonian vector fields is the standard Lie bracket on $\X(M)$.
\end{lemma}

\begin{proof}
	We obtain $X_{\{\alpha,\beta\}} = [X_\alpha,X_\beta]$ from
	\[
		\d\{\alpha,\beta\} = \L_{X_\alpha}\iota_{X_\beta}\omega = \iota_{[X_\alpha,X_\beta]}\omega,
	\]
	and the Jacobi identity follows as $[\L_{X_\alpha},\L_{X_\beta}] \gamma = \L_{X_{\{\alpha,\beta\}}}\gamma$.
\end{proof}

\begin{proposition}\label{prop:proportionally_closed_Hamiltonian_forms}
	If a closed form $\eta\in\Omega^{k-1}(M)$ divides $\omega$, so that $\omega=\sigma\wedge\eta$ for some $\sigma\in\Omega^2(M)$, and if $j:L\to M$ is an integral manifold of the kernel distribution $\F\subset TM$ of $\eta$, then
	\begin{enumerate}[i.]
		\item $j^*\sigma\in\Omega^2(M)$ is a presymplectic structure on $L$ and is independent of the choice of $\sigma$,
		\item if $f\eta\in\Omega_H^{k-1}(M)$ with associated vector field $X\in\X(M)$ then
			\[
				\d j^*f = j^*\iota_X\sigma,
			\]
		\item if $X$ is tangent to $L$, then $X|_L\in\X(L)$ is the Hamiltonian vector field associated to $j^*f\in C^\infty(L)$ with respect to the presymplectic structure $j^*\sigma\in\Omega^2(L)$.
	\end{enumerate}
\end{proposition}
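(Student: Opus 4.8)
The unifying device is to contract forms with vectors tangent to $L$, which lie in the kernel distribution $\F=\ker\eta$. I would first isolate the following observation: if $\tau\in\Omega^p(M)$ satisfies $\tau\wedge\eta=0$, then $j^*\tau=0$. Indeed, for $v_1,\dots,v_p\in T_xL=\F_x$ the graded Leibniz rule together with $\iota_{v_i}\eta=0$ shows that every term in $\iota_{v_p}\cdots\iota_{v_1}(\tau\wedge\eta)$ in which an interior product meets $\eta$ vanishes, leaving $\tau(v_1,\dots,v_p)\,\eta = \iota_{v_p}\cdots\iota_{v_1}(\tau\wedge\eta)=0$; since $\eta$ is nonvanishing (it divides the nondegenerate $\omega$), this forces $\tau(v_1,\dots,v_p)=0$, that is, $j^*\tau=0$. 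This lemma is the engine for all three parts.

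For (i), differentiating $\omega=\sigma\wedge\eta$ and using $\d\omega=0$ and $\d\eta=0$ gives $\d\sigma\wedge\eta=0$, so the lemma yields $j^*\d\sigma=\d\,j^*\sigma=0$ and $j^*\sigma$ is presymplectic. If also $\omega=\sigma'\wedge\eta$, then $(\sigma-\sigma')\wedge\eta=0$ and the same lemma gives $j^*\sigma=j^*\sigma'$, so $j^*\sigma$ is independent of the chosen factor.

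For (ii), since $\eta$ is closed the Hamiltonian condition reads $\iota_X\omega=\d(f\eta)=\d f\wedge\eta$, while expanding $\iota_X(\sigma\wedge\eta)$ gives $\iota_X\omega=(\iota_X\sigma)\wedge\eta+\sigma\wedge\iota_X\eta$. Subtracting and writing $\beta=\d f-\iota_X\sigma$ yields $\beta\wedge\eta=\sigma\wedge\iota_X\eta$. I would like to deduce $j^*\beta=0$ from the lemma, which needs $\beta\wedge\eta=0$; contracting the identity with a single $v\in\F$ and using $\iota_v\iota_X\eta=-\iota_X\iota_v\eta=0$ reduces the obstruction to the vanishing of the correction term $(\iota_v\sigma)\wedge\iota_X\eta$ for all $v\in\F$. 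Establishing this is the heart of the matter, and I expect it to be the main obstacle: I anticipate that the nondegeneracy of $\omega$ constrains $\iota_X\eta$ tightly enough that this term restricts to zero on $L$ — in the cleanest case by forcing $X$ tangent to the leaves of $\F$, so that $\iota_X\eta=0$ outright. Granting the vanishing, $\beta\wedge\eta=0$ and the lemma gives $j^*\beta=0$, that is, $\d\,j^*f=j^*\iota_X\sigma$.

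Finally, for (iii), if $X$ is tangent to $L$ then restriction commutes with contraction, $j^*\iota_X\sigma=\iota_{X|_L}\,j^*\sigma$, so part (ii) becomes $\iota_{X|_L}(j^*\sigma)=\d(j^*f)$; by definition this exhibits $X|_L$ as the Hamiltonian vector field of $j^*f$ with respect to the presymplectic form $j^*\sigma$, with the tangency hypothesis needed only so that $X|_L\in\X(L)$ is defined. Thus parts (i) and (iii) are direct applications of the single contraction lemma, and the sole delicate point is the nondegeneracy argument controlling the correction term $(\iota_v\sigma)\wedge\iota_X\eta$ in step (ii).
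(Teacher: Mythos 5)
Your contraction lemma and your treatment of parts (i) and (iii) are correct and essentially identical to the paper's proof, which performs the same contractions with kernel vectors directly. The genuine gap is exactly where you place it, in part (ii): having reduced the claim to the identity $\beta(v)\,\eta=(\iota_v\sigma)\wedge\iota_X\eta$ for $v\in\F_x$, where $\beta=\d f-\iota_X\sigma$, you do not prove that the right-hand side vanishes; you only ``grant'' it. Moreover, the mechanism you anticipate --- that nondegeneracy of $\omega$ forces $X$ tangent to $\F$, so that $\iota_X\eta=0$ outright --- is not a consequence of the hypotheses: on $M=\R^4$ with $\omega=\d x^1\wedge\d x^2\wedge\d x^3\wedge\d x^4$, $\eta=\d x^1\wedge\d x^2+\d x^3\wedge\d x^4$ and $\sigma=\d x^1\wedge\d x^2$, the form $x^1\eta$ is Hamiltonian with $X=-\partial/\partial x^2$, and $\iota_X\eta=\d x^1\neq0$ (here $\F=0$, so the proposition is vacuous, but the example rules out your proposed mechanism as a general fact). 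Note also that it would not suffice for the correction term to ``restrict to zero on $L$'': its pullback to $L$ vanishes automatically (just as $j^*\eta=0$), and what is actually needed is that it vanish as a $(k-1)$-covector at each point of $L$, since only then does the scalar $\beta(v)$ vanish.

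The gap closes with one more line, using nondegeneracy in a different way than you anticipated: wedge the identity $\beta(v)\,\eta=(\iota_v\sigma)\wedge\iota_X\eta$ with the $1$-form $\iota_v\sigma$. The right-hand side then carries the repeated factor $\iota_v\sigma$ and dies, giving
\[
	\beta(v)\,(\iota_v\sigma)\wedge\eta \;=\; \beta(v)\,\iota_v(\sigma\wedge\eta) \;=\; \beta(v)\,\iota_v\omega \;=\; 0,
\]
where the first equality uses $\iota_v\eta=0$. Since $\omega$ is nondegenerate and $v\neq0$, we have $\iota_v\omega\neq0$, hence $\beta(v)=0$ for every $v\in\F_x$, which is exactly $\d\, j^*f=j^*\iota_X\sigma$. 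You may take some satisfaction in the fact that the term you isolated is genuinely the heart of the matter: the paper's own one-line proof of (ii) asserts the identity $(\d f-\iota_X\sigma)(u)\wedge\eta=\iota_u\big[\iota_X\omega-\d(f\eta)\big]$, which silently discards precisely the correction term $(\iota_u\sigma)\wedge\iota_X\eta$ that you flagged; the wedge argument above is what is required to make that step, and hence the proposition, airtight.
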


\begin{proof}
	\begin{enumerate}[i.]
		\item Fix $x\in L$. From the nondegeneracy of $\omega$ we deduce that $\eta$ is nonvanishing and it follows from
		\[
			\d\sigma(u,v,w)\wedge\eta = \iota_w\iota_v\iota_u \d\omega = 0,\hspace{.8cm}u,v,w\in \F_x
		\]
		that $j^*\sigma$ is closed. Similarly, if $\sigma'\in\Omega^2(M)$ with $\omega=\sigma'\wedge\eta$ then the identity
		\[
			(\sigma-\sigma')(u,v)\wedge\eta = \iota_v\iota_u(\sigma\wedge\eta-\sigma'\wedge\eta) = 0,\hspace{.8cm}u,v\in \F_x
		\]
		implies that $j^*\sigma$ is independent of $\sigma$.

		\item This follows from the fact that
			\[
				(\d f - \iota_X\sigma)(u)\wedge\eta = \iota_u\big[\iota_X\omega - \d(f\eta)\big] = 0,	\hspace{.8cm}u\in\F_x.
			\]

		\item For $X\in T_xL$, then the previous assertion yields $\d j^*f = j^*\iota_X\sigma = \iota_X j^*\sigma$.
	\end{enumerate}
\end{proof}

\begin{remark}
	A similar argument shows that if $\F$ has constant rank, then $\iota\sigma:\F\to T^*M$ is an inclusion of vector bundles. The closed form $j^*\sigma$ is nondegenerate, and hence symplectic, when restriction along the fibers $T^*M\to \F^*$ yields an isomorphism $\F\to T^*M\to \F^*$.
\end{remark}


\section{Hamiltonian $G$-spaces}\label{sec:Hamiltonian_systems}

We begin with the fundamental construction of this paper.

\begin{definition}\label{def:moment_map}
	A \emph{comoment map} for a Lie algebra action $\g\curvearrowright M$ is a homomorphism of Leibniz algebra $\tilde\mu:\g\to\Omega_H^{k-1}(M)$ which completes the following commutative diagram.
	\begin{center}
	\begin{tikzpicture}
		\node (A) at (0,0) {$\g$};
		\node (B) at (2,0) {$\X(M)$};
		\node (C) at (2,2) {$\Omega_H^{k-1}(M)$};

		\node (D) at (0,-.6) {$\xi$};
		\node (E) at (2,-.6) {$\underline\xi$};

		\node (F) at (3.1,2) {$\alpha$};
		\node (G) at (3.1,0) {$X_\alpha$};

		\draw[->] (A) to (B);
		\draw[->,dashed] (A) to node[above left] {$\tilde\mu$} (C);
		\draw[->] (C) to (B);

		\draw[|->] (D) to (E);
		\draw[|->] (F) to (G);
	\end{tikzpicture}
	\end{center}
	The \emph{moment map} associated to $\tilde\mu$ is the differential form $\mu\in\Omega^{k-1}(M,\g^*)$ defined at each $x\in M$ by
	\begin{align*}
		\mu(x):	\g	&\to		\Lambda^{k-1}T_x^*M		\\
			\xi	&\mapsto	\tilde\mu(\xi)(x),
	\end{align*}
	under the natural identification of $\g^*\otimes\Lambda^{k-1}T_x^*M$ with $\Hom(\g,\Lambda^{k-1}T_x^*M)$. Together these data constitute a multisymplectic \emph{Hamiltonian $\g$-space} $(M,\omega,\g,\mu)$.

	If, additionally, $\g\curvearrowright M$ is induced by a multisymplectic Lie group action $G\curvearrowright M$, and if $\mu$ is $G$-equivariant as a map from $M$ to $\Lambda^{k-1}T^*M\otimes\g^*$, then we say that $\mu$ is a moment map for $G\curvearrowright M$ and that $(M,\omega,G,\mu)$ is a multisymplectic \emph{Hamiltonian $G$-space}.
\end{definition}

\begin{remark}
	\begin{enumerate}
		\item Here we consider that action of $G$ on $\g^*\otimes\Lambda^{k-1}T^*M$ given as the tensor product of the coadjoint action $G\curvearrowright\g^*$ and the induced action $G\curvearrowright\Lambda^{k-1}T^*M$.
		\item Except when explicitly stated otherwise, we will always take the moment map $\mu$ to be associated to a multisymplectic Lie group action $G\curvearrowright M$.
		\item There are numerous inequivalent generalizations of moment maps to the multisymplectic setting in the literature. Our definition of a Hamiltonian $G$-space $(M,\omega,G,\mu)$ generalizes the \emph{$\mathrm{Ad}^*$-equivariant covariant momentum maps} of \cite{GotayIsenbergMarsdenMontgomery1999} from the physical context (see also \cite[Equation 4.50]{MarsdenPatrickShkoller98}), and coincides with the construction of \cite{Echeverria-EnriquezMunoz-LecandaRoman-Roy18} consisting of a \emph{Coad-equivariant action} of $G$ on a multisymplectic manifold $(M,\omega)$ together with a \emph{momentum map} $\mu$.

		Other constructions include \emph{multi-moment maps} \cite{MadsenSwann12,MadsenSwann13}, \emph{homotopy moment maps} \cite{CalliesFregierRogersZambon16}, and \emph{weak moment maps} \cite{Herman18,Herman18a}. See \cite{MammadovaRyvkin20} for a comparison of the latter two.
	\end{enumerate}
\end{remark}

If $\mu\in\Omega^{k-1}(M,\g^*)$ is a moment map, the associated comoment map is given by $\tilde\mu(\xi) = \mu_\xi$ for each $\xi\in\g$. That is, $\tilde\mu(\xi)$ is the fiberwise contraction of $\mu$ and $\xi$.

\begin{example}\label{eg:standard_examples_Hamiltonian_systems}
	Consider again the multisymplectic manifolds of Example \ref{eg:standard_examples_manifolds}.
	\begin{enumerate}[i.]
		\item If $(M^{2n},\sigma,G,\nu)$ is a symplectic Hamiltonian $G$-space then $(M,\sigma^\ell,G,\,\ell\nu\wedge\sigma^{\ell-1})$ is a $(2\ell-1)$-plectic Hamiltonian $G$-space for $1\leq\ell\leq n$.
		\item Let $(G,\omega)$ be a semisimple Lie group with the canonical bi-invariant $2$-plectic structure. Since $\omega$ is bi-invariant, the left regular action of $G$ on itself is multisymplectic. The fundamental vector field $\underline\xi\in\X(G)$ associated to $\xi$ is the right invariant vector field extending $-\xi\in T_1G$. Using the $\Ad$-invariance of $\langle\,,\rangle$ we obtain
			\[
				\iota_\xi\omega = \langle \d\bar\theta,\bar\theta(\underline\xi)\rangle = \d\langle\bar\theta,\xi\rangle,
			\]
			from which $\langle\bar\theta,\xi\rangle\in\Omega_H^1(G)$ is a Hamiltonian $1$-form associated to $\underline\xi$. Since
			\[
				\L_\xi \,\langle\bar\theta,\zeta\rangle = -\langle[\xi,\bar\theta],\zeta\rangle = \langle\bar\theta,[\xi,\zeta]\rangle
			\]
			for every $\xi,\zeta\in\g$, it follows that $\langle\bar\theta,\cdot\,\rangle\in\Omega^1(G,\g^*)$ is a moment map for the left regular action of $G$. Likewise, $\langle\theta,\cdot\,\rangle$ is a moment map for the right regular action, and $\langle\bar\theta-\theta,\cdot\,\rangle$ is a moment map for the adjoint action.

		\item The smooth action of a Lie group $G$ on a fiber bundle $E\to\Sigma$ lifts to an action on $\pi:\Lambda_\ell^k\hspace{1pt}T^*E\to E$ in such a way that the canonical $k$-form $\theta\in\Omega^k(\Lambda_\ell^k\hspace{1pt}T^*E)$ is preserved. Explicitly, we define $(g\gamma)(X_1,\ldots,X_k) = \gamma(g^{-1}_*X_1,\ldots,g^{-1}_*X_k)$ for $\gamma\in\Lambda_\ell^k\hspace{1pt}T_x^*E$, $X_i\in T_{gx}E$, and $g\in G$. In particular, note that $\pi_*\underline\xi = \underline\xi_E$, where we write $\underline\xi$ for $\underline\xi_{\Lambda_\ell^k\hspace{1pt}T^*E}$. From
			\[
				\d\iota_\xi\theta = \L_\xi\theta - \iota_\xi\d\theta = -\iota_\xi\d\theta
			\]
			it follows that $\iota_\xi\theta\in\Omega^{k-1}(\Lambda_\ell^k\hspace{1pt}T^*E)$ is a Hamiltonian $(k-1)$-form associated to $\underline\xi\in\X(\Lambda_\ell^k\hspace{1pt}T^*E)$. Using again the $G$-invariance of $\theta$, we obtain
			\[
				\L_\xi \,\iota_\zeta\theta = \iota_{[\xi,\zeta]}\theta + \iota_\zeta\L_\xi\theta = \iota_{[\xi,\zeta]}\theta,
			\]
			and we conclude that
			\[
				\mu_\xi(\gamma) = (\iota_\xi\theta)_\gamma = \iota_{\xi_E}\gamma
			\]
			defines a moment map $\mu\in\Omega^{k-1}(\Lambda_\ell^k\hspace{1pt}T^*E,\g^*)$ for the action of $G$ on $\Lambda_\ell^k\hspace{1pt}T^*E$.

			More generally, any smooth action of a Lie group $G$ on an exact $k$-plectic manifold $(M,-\d\theta)$ that preserves the potential $\theta$ admits the moment map $\iota\hspace{1pt}\theta\in\Omega^{k-1}(M,\g^*)$.
	\end{enumerate}
\end{example}

\begin{example}
	A Hamiltonian form $\alpha\in\Omega_H^{k-1}(M)$ is said to be \emph{periodic} if the associated Hamiltonian vector field $X\in\X(M)$ generates an $S^1$-action on $(M,\omega)$. In this case, the assignment
	\begin{align*}
		\tilde\mu:	\R	&\to		\Omega^{k-1}(M)		\\
				t	&\mapsto	t\alpha
	\end{align*}
	is a comoment map precisely when $\L_X\alpha = \{\alpha,\alpha\} = [X,X] = 0$. Here we have identified $S^1$ with $\R/p\Z$ where $p>0$ is the period of $X$.
\end{example}

\begin{proposition}
	If $(M,\omega,\g,\mu)$ is a Hamiltonian $\g$-space, and if $G$ is connected, then $(M,\omega,G,\mu)$ is a Hamiltonian $G$-space.
\end{proposition}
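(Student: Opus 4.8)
The plan is to obtain the two defining properties of a Hamiltonian $G$-space — multisymplecticity of the action and $G$-equivariance of $\mu$ — from their infinitesimal counterparts, each of which is already built into the Hamiltonian $\g$-space data, by integrating along one-parameter subgroups and invoking the connectedness of $G$. Throughout I write $L_g:M\to M$ for the action of $g\in G$, and I recall that with the convention $\underline\zeta_x=\frac{\d}{\d t}e^{-t\zeta}x\big|_{t=0}$ the flow of $\underline\zeta$ is $\Phi_t^{\underline\zeta}=L_{\exp(-t\zeta)}$.

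First I would check that the $G$-action is multisymplectic. For each $\xi\in\g$ the relation $\iota_\xi\omega=\d\mu_\xi$ together with the closedness of $\omega$ gives $\L_\xi\omega=\d\iota_\xi\omega+\iota_\xi\d\omega=\d\d\mu_\xi=0$, so every fundamental vector field is an infinitesimal symmetry. Hence the flow of $\underline\xi$, namely the action of the one-parameter subgroup $\exp(t\xi)$, preserves $\omega$, giving $\exp(t\xi)^*\omega=\omega$. Since $G$ is connected it is generated by $\exp\g$, and pullback is contravariantly multiplicative, so $g^*\omega=\omega$ for every $g\in G$.

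Second — and this is the substance of the argument — I would establish $G$-equivariance. Unwinding the tensor-product action on $\Lambda^{k-1}T^*M\otimes\g^*$, equivariance of $\mu$ is equivalent to the identity $L_g^*\mu_\xi=\mu_{\Ad_{g^{-1}}\xi}$ for all $g\in G$ and $\xi\in\g$; this reformulation is a routine computation using that the induced action on forms is pullback by $g^{-1}$ and the coadjoint action is dual to $\Ad_{g^{-1}}$. Its infinitesimal form is $\L_\zeta\mu_\xi=\mu_{[\zeta,\xi]}$, which is precisely the Leibniz-homomorphism property of $\tilde\mu$, since $\L_\zeta\mu_\xi=\{\mu_\zeta,\mu_\xi\}=\tilde\mu([\zeta,\xi])=\mu_{[\zeta,\xi]}$. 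To integrate this I would fix $\zeta\in\g$ and consider the family $F(t):\g\to\Omega^{k-1}(M)$ given by $F(t)\xi=(\Phi_t^{\underline\zeta})^*\mu_\xi-\mu_{\Ad_{\exp t\zeta}\xi}$, which vanishes at $t=0$. Differentiating, the identity $\frac{\d}{\d t}(\Phi_t^{\underline\zeta})^*\mu_\xi=(\Phi_t^{\underline\zeta})^*\L_\zeta\mu_\xi=(\Phi_t^{\underline\zeta})^*\mu_{[\zeta,\xi]}$ and $\frac{\d}{\d t}\mu_{\Ad_{\exp t\zeta}\xi}=\mu_{\Ad_{\exp t\zeta}[\zeta,\xi]}$ combine to the linear ODE $\frac{\d}{\d t}F(t)=F(t)\circ\mathrm{ad}_\zeta$ with $F(0)=0$, whose unique solution $F(t)=F(0)\circ e^{t\,\mathrm{ad}_\zeta}$ is identically zero. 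Taking $g=\exp(-t\zeta)$ this reads $L_g^*\mu_\xi=\mu_{\Ad_{g^{-1}}\xi}$ for $g\in\exp\g$, and since the identity is multiplicative in $g$ — as $L_{g_1g_2}^*=L_{g_2}^*L_{g_1}^*$ and $\Ad$ is a homomorphism — connectedness propagates it to all of $G$.

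The main obstacle is precisely this integration step. Because the adjoint twist $\Ad_{\exp t\zeta}$ is itself $t$-dependent, one cannot simply exponentiate the $t=0$ equality; instead one must track the whole $\g$-parametrized family $\{F(t)\xi\}_{\xi\in\g}$ simultaneously and recognize it as the solution of a linear ODE governed by $\mathrm{ad}_\zeta$, which is what forces $F\equiv0$. Everything else reduces to keeping the fundamental-vector-field sign convention and the coadjoint pairing consistent.
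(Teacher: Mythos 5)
Your proof is correct and takes essentially the same route as the paper: the Leibniz-homomorphism property of $\tilde\mu$ gives the infinitesimal equivariance $\L_\zeta\mu_\xi=\{\mu_\zeta,\mu_\xi\}=\mu_{[\zeta,\xi]}$, which the connectedness of $G$ then integrates to global equivariance. The paper compresses the integration (and the multisymplecticity of the $G$-action) into ``the result follows as $G$ is connected,'' whereas you make both explicit --- the ODE argument $\frac{\d}{\d t}F(t)=F(t)\circ\mathrm{ad}_\zeta$, $F(0)=0$, and the propagation from $\exp\g$ to $G$ --- which is a filling-in of detail rather than a different method.
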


\begin{proof}
	Fix $\xi\in\g$. From
	\[
		\L_\xi\mu_\zeta= \{\mu_\xi,\mu_\zeta\} = \mu_{[\xi,\zeta]}
	\]
	we deduce that $\langle\L_\xi\mu,\zeta\rangle = \langle\mathrm{ad}_{-\xi}^*\,\mu,\zeta\rangle$ for all $\zeta\in\g$ and the result follows as $G$ is connected.
\end{proof}

Indeed, for connected $G$ the identity $\d\mu_\xi=\iota_\xi\omega$ yields the equivalent characterization of the moment map as a $G$-equivariant form $\mu\in\Omega^{k-1}(M,\g^*)$ satisfying
\[
	\big\langle\d\mu(X_1,\ldots,X_k),\xi\big\rangle = (-1)^k\, \omega(X_1,\ldots,X_k,\underline\xi)
\]
for all $X_1,\ldots,X_k\in\X(M)$ and $\xi\in\g$. In this case, we will say that $\mu$ is \emph{equivariant}.

For the remainder of this section we restrict our attention to a particularly tractable class of moment maps, given as follows.

\begin{definition}
	The moment map $\mu\in\Omega^{k-1}(M,\g^*)$ is said to \emph{split} if $\mu=\nu\wedge\eta$ for some $\nu\in C^\infty(M,\g^*)$ and some closed $\eta\in\Omega^{k-1}(M)$. We say that $\mu=\nu\wedge\eta$ is an
	\begin{enumerate}[i.]
		\item \emph{invariant splitting} if $\eta$ is $G$-invariant,
		\item \emph{basic splitting} if $\eta$ is $G$-basic.
	\end{enumerate}
	We call the associated Hamiltonian $G$-space a \emph{split Hamiltonian $G$-space}.
\end{definition}

\begin{remark}
	\begin{enumerate}
		\item Here we are using the fact that the usual wedge product $\wedge:\Omega^*(M)\otimes\Omega^*(M)\to \Omega^*(M)$ extends naturally to a product
		\[
			\wedge : \Omega^*(M,\g^*)\otimes\Omega^*(M) \longrightarrow \Omega^*(M,\g^*).
		\]
		\item Recall that a form $\eta$ is \emph{$G$-basic} when it is both $G$-invariant and $G$-horizontal. In turn, the form $\eta$ is said to be \emph{$G$-horizontal} when $\iota_\xi\eta=0$ for all $\xi\in\g$.
	\end{enumerate}
\end{remark}

The closedness of $\eta$ implies that the kernel distribution $\F=\ker\eta\subset TM$ is involutive, since
\[
	\iota_{[X,Y]}\eta = [\L_X,\iota_Y]\hspace{1pt}\eta = -\iota_Y(\d\iota_X+\iota_X\d)\hspace{1pt}\eta = 0
\]
for all $X,Y\in\X(M)$ with $\iota_X\eta=\iota_Y\eta=0$. We make a simple but descriptive observation.

\begin{proposition}
	If the splitting $\mu=\nu\wedge\eta$ is
	\begin{enumerate}[i.]
		\item invariant, then $G$ preserves $\F$,
		\item basic, and if $G$ is connected, then $G$ fixes the leaves of $\F$.
	\end{enumerate}
\end{proposition}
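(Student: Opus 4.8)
The plan is to treat the two parts separately, with both reducing to elementary properties of the kernel distribution $\F=\ker\eta$, whose involutivity has already been established above from the closedness of $\eta$. For part (i), I would show directly that the pointwise construction $x\mapsto\F_x=\{X\in T_xM:\iota_X\eta=0\}$ is equivariant under any diffeomorphism preserving $\eta$. Writing $\Phi_g$ for the action of $g\in G$, the $G$-invariance $\Phi_g^*\eta=\eta$ reads $\eta_{gx}\big((\Phi_g)_*Y_1,\ldots,(\Phi_g)_*Y_{k-1}\big)=\eta_x(Y_1,\ldots,Y_{k-1})$. First I would take $X\in\F_x$ and an arbitrary tuple $Z_2,\ldots,Z_{k-1}\in T_{gx}M$, set $Y_i=(\Phi_{g^{-1}})_*Z_i$, and substitute to obtain $\eta_{gx}\big((\Phi_g)_*X,Z_2,\ldots,Z_{k-1}\big)=\eta_x(X,Y_2,\ldots,Y_{k-1})=0$. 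This gives $(\Phi_g)_*\F_x\subseteq\F_{gx}$; running the same argument for $g^{-1}$ yields equality, so $G$ preserves $\F$.

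For part (ii), the basic hypothesis supplies in addition that $\eta$ is $G$-horizontal, i.e.\ $\iota_\xi\eta=0$ for all $\xi\in\g$. The plan is first to observe that this says precisely that every fundamental vector field $\underline\xi$ is a section of $\F$. Since $\F$ is involutive, I would then invoke the standard fact that an integral curve of a vector field tangent to $\F$ stays within a single leaf; because the flow of $\underline\xi$ through $x$ is $t\mapsto e^{-t\xi}x$, it follows that each one-parameter subgroup $\{e^{-t\xi}\}$ carries every leaf onto itself. Finally, using that a connected Lie group is generated by the image of the exponential map, every $g\in G$ is a finite product of such elements and hence also fixes each leaf.

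The two computations—the invariance identity in part (i) and the tangency observation $\underline\xi\in\F$ in part (ii)—are routine and present no difficulty. The hard part will be the geometric bookkeeping underlying part (ii): justifying that $\F$ organizes into leaves and that flows tangent to $\F$ preserve them. The cleanest route relies only on involutivity, already in hand, together with the uniqueness of maximal integral manifolds, so I would either assume $\F$ has constant rank—whereupon $\F$ is a genuine foliation by Frobenius—or phrase the leaf statement in the generalized (Stefan--Sussmann) sense. Once the foliation structure is granted, the passage from the infinitesimal statement to the group-level conclusion is immediate from connectedness.
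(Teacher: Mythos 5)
Your proposal is correct and follows essentially the same route as the paper's (very terse) proof: part (i) is exactly the observation that the kernel of a $G$-invariant form is a $G$-preserved distribution, and part (ii) rests on the same key point, namely that $G$-horizontality $\iota_\xi\eta=0$ says the fundamental vector fields $\underline\xi$ are sections of $\F$, after which flows tangent to the involutive distribution preserve leaves and connectedness of $G$ finishes the argument. Your additional care about when $\F$ actually integrates to leaves (constant rank via Frobenius, or the Stefan--Sussmann framework) is a legitimate refinement that the paper glosses over, but it does not change the method.
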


\begin{proof}
	\begin{enumerate}[i.]
		\item This is immediate from the invariance of $\eta$.
		\item The condition $\iota_\xi\eta = 0$ is precisely that $\underline\xi_x\in\F$ for every $x\in M$.
	\end{enumerate}
\end{proof}

\begin{example}\label{eg:split_Hamiltonian_systems}
	\begin{enumerate}[i.]
		\item If $(M,\sigma^\ell,G,\ell\nu\wedge\sigma^{\ell-1})$ is obtained from $(M^{2n},\sigma,G,\nu)$ as in Example \ref{eg:standard_examples_Hamiltonian_systems}, above, then $\mu=\nu\wedge\ell\sigma^{\ell-1}$ is an invariant splitting, and is basic if and only if the action of $G$ is discrete. Each point of $M$ is a leaf of the kernel distribution of $\sigma^{\ell-1}$.
		\item Following \cite[Definition 1.2.6]{GuilleminLermanSternberg96}, we define a \emph{symplectic fibration} to be a smooth fiber bundle $\pi:M\to\Sigma$ equipped with a $2$-form $\sigma\in\Omega^2(M)$ which restricts to a symplectic structure on fibers and which satisfies
			\[
				\iota_Y\iota_X\d\sigma = 0
			\]
			for all vertical vectors $X,Y\in V_xM$ at $x\in M$. Suppose $\eta=\pi^*\d\mathrm{vol}_\Sigma\in\Omega^{k-1}(M)$ is the lift of a volume form on $\Sigma$. Since the span of any $k+2$ linearly independent tangent vectors at $x$ has at least $3$-dimensional intersection with $V_xM$, the condition above implies that $\d\sigma\wedge\eta=0$. Since $\sigma\wedge\eta$ is nondegenerate in both the vertical and in any complementary horizontal directions, we conclude that $\omega=\sigma\wedge\eta$ is a $k$-plectic structure on $M$.

			The kernel distribution of $\eta$ is the vertical tangent bundle $VM\subset TM$ of $\pi:M\to\Sigma$. The leaves of $\ker\eta$ coincide with the fibers of $\pi$.
	\end{enumerate}
\end{example}

\begin{proposition}
	If $\mu=\nu\wedge\eta$ is an invariant splitting, if $\omega=\sigma\wedge\eta$ for some $\sigma\in\Omega^2(M)$, and if $j:L\to M$ is a $G$-invariant integral manifold of the kernel distribution of $\eta$, then $(L,j^*\sigma,G,j^*\nu)$ is a presymplectic Hamiltonian $G$-space on $L$.
\end{proposition}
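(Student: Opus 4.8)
The plan is to build the presymplectic Hamiltonian $G$-space structure directly on top of Proposition~\ref{prop:proportionally_closed_Hamiltonian_forms}, which already records the premultisymplectic geometry of a leaf $L$ of $\F=\ker\eta$, and then to extract the $G$-equivariant features from the invariance of $\eta$. First I would note that Proposition~\ref{prop:proportionally_closed_Hamiltonian_forms}(i) gives at once that $j^*\sigma$ is a presymplectic structure on $L$, independent of the choice of primitive $\sigma$ with $\omega=\sigma\wedge\eta$. Since $L$ is $G$-invariant the action restricts to $L$, and because each one-parameter subgroup $e^{-t\xi}$ preserves $j(L)$, the curve $t\mapsto e^{-t\xi}x$ lies in $j(L)$ for $x\in j(L)$, so the fundamental vector field $\underline\xi$ is tangent to $L$; write $\underline\xi|_L\in\X(L)$ for the resulting $j$-related fundamental vector field of the restricted action. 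To see that this action is presymplectic, I would use $g^*\omega=\omega$ and $g^*\eta=\eta$ to conclude that $(g^*\sigma)\wedge\eta=\omega$, so $g^*\sigma$ is again an admissible primitive; the independence clause then gives $j^*(g^*\sigma)=j^*\sigma$, and since $j\circ(g|_L)=g\circ j$ this is exactly $(g|_L)^*(j^*\sigma)=j^*\sigma$.

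Next I would verify the Hamiltonian (comoment) condition. For each $\xi\in\g$ we have $\mu_\xi=\langle\nu\wedge\eta,\xi\rangle=\nu_\xi\,\eta$, a Hamiltonian $(k-1)$-form of the shape $f\eta$ with $f=\nu_\xi$ and associated vector field $\underline\xi$. Since $\underline\xi$ is tangent to $L$, Proposition~\ref{prop:proportionally_closed_Hamiltonian_forms}(iii) yields $\d(j^*\nu_\xi)=\iota_{\underline\xi|_L}\,j^*\sigma$, so that $j^*\nu_\xi\in C_H^\infty(L)$ is a Hamiltonian function for $\underline\xi|_L$ with respect to $j^*\sigma$. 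This is precisely the Hamiltonian condition completing the comoment diagram of Definition~\ref{def:moment_map} in the instance $k=1$, with $\tilde{\jmath^*\nu}(\xi)=j^*\nu_\xi$.

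It then remains to check that the comoment map is a homomorphism of Leibniz algebras and that $j^*\nu$ is equivariant. Starting from $\{\mu_\xi,\mu_\zeta\}=\mu_{[\xi,\zeta]}$ on $M$, I would expand $\L_{\underline\xi}(\nu_\zeta\,\eta)=(\L_{\underline\xi}\nu_\zeta)\eta$, using $\L_{\underline\xi}\eta=0$ from the $G$-invariance of $\eta$; since $\eta$ is nonvanishing (as $\eta$ divides the nondegenerate $\omega$) the factor $\eta$ cancels to give $\L_{\underline\xi}\nu_\zeta=\nu_{[\xi,\zeta]}$. Restricting and using that $\underline\xi|_L$ is $j$-related to $\underline\xi$ gives $\{j^*\nu_\xi,\,j^*\nu_\zeta\}_L=\L_{\underline\xi|_L}(j^*\nu_\zeta)=j^*\nu_{[\xi,\zeta]}$, the required homomorphism property. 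Finally, the equivariance of $\mu$ together with $g^*\eta=\eta$ gives the coadjoint equivariance of $\nu$ after the same cancellation of $\eta$, and this restricts along the $G$-equivariant inclusion $j$ to equivariance of $j^*\nu$.

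The main obstacle will not be any single heavy computation but rather confirming that the hypotheses of Proposition~\ref{prop:proportionally_closed_Hamiltonian_forms}(iii) genuinely apply here, namely the tangency of $\underline\xi$ to $L$, and then carrying the Leibniz homomorphism and equivariance through cleanly. In both of the latter the decisive simplifications are the vanishing $\L_{\underline\xi}\eta=0$ supplied by the invariant splitting and the nonvanishing of $\eta$, which together permit cancellation of the auxiliary factor and reduce the multisymplectic identities on $M$ to their presymplectic counterparts on $L$.
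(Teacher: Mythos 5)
Your proof is correct and takes essentially the same approach as the paper, whose entire proof is the single sentence that the result is an immediate consequence of Proposition~\ref{prop:proportionally_closed_Hamiltonian_forms}. The details you supply---tangency of the fundamental vector fields to $L$, invariance of $j^*\sigma$ via the independence clause of part i., and cancellation of the nonvanishing factor $\eta$ in the Leibniz-homomorphism and equivariance identities---are precisely the verifications the paper leaves implicit.
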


\begin{proof}
	This is an immediate consequence of Proposition \ref{prop:proportionally_closed_Hamiltonian_forms}.
\end{proof}

\begin{lemma}
	If $G$ is connected and $\mu=\nu\wedge\eta$ is an invariant splitting then the map
	\begin{align*}
		\tilde\nu:	\g	&\to		C^\infty(M)		\\
				\xi	&\mapsto	\:\nu_\xi
	\end{align*}
	is equivariant on the complement of the fixed point set of $G$.
\end{lemma}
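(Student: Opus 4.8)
The plan is to reduce the asserted equivariance of $\tilde\nu$ to its infinitesimal form and then to exploit the fact that the obstruction to equivariance is a scalar multiple of $\eta$, which turns out to be nonvanishing precisely where $G$ acts nontrivially. Since $G$ is connected, it suffices to verify the infinitesimal identity $\L_\xi\nu_\zeta = \nu_{[\xi,\zeta]}$ pointwise on $M\setminus M^G$, where $M^G$ denotes the fixed point set.

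First I would recall the infinitesimal equivariance of the full moment map. From the Leibniz homomorphism property of $\tilde\mu$ one has $\L_\xi\mu_\zeta = \{\mu_\xi,\mu_\zeta\} = \mu_{[\xi,\zeta]}$ for all $\xi,\zeta\in\g$, exactly as in the proof that a connected Hamiltonian $\g$-space is a Hamiltonian $G$-space. Substituting the splitting $\mu_\zeta = \nu_\zeta\hspace{1pt}\eta$ and using the invariance $\L_\xi\eta=0$, the Leibniz rule gives $(\L_\xi\nu_\zeta)\eta = \L_\xi(\nu_\zeta\hspace{1pt}\eta) = \mu_{[\xi,\zeta]} = \nu_{[\xi,\zeta]}\hspace{1pt}\eta$, so that
\[
	\big(\L_\xi\nu_\zeta - \nu_{[\xi,\zeta]}\big)\,\eta = 0.
\]
Thus the scalar obstruction to equivariance is annihilated upon multiplication by $\eta$, and therefore vanishes at every point where $\eta$ is nonzero.

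The crux of the argument, and the step that forces the restriction to $M\setminus M^G$, is to show that $\eta$ is nonvanishing away from the fixed point set. Here I would use the Hamiltonian relation $\d\mu_\xi = \iota_\xi\omega$ together with the closedness of $\eta$ to write $\iota_\xi\omega = \d(\nu_\xi\hspace{1pt}\eta) = \d\nu_\xi\wedge\eta$. If $x\notin M^G$, then $\underline\xi_x\neq 0$ for some $\xi\in\g$, and the nondegeneracy of $\omega$ gives $\iota_\xi\omega|_x\neq 0$; hence $\d\nu_\xi\wedge\eta|_x\neq 0$, which is only possible if $\eta_x\neq 0$. Consequently the displayed identity forces $\L_\xi\nu_\zeta - \nu_{[\xi,\zeta]} = 0$ at every point of $M\setminus M^G$, which is precisely the desired infinitesimal equivariance; connectedness of $G$ then upgrades this to equivariance of $\tilde\nu$ on $M\setminus M^G$.

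I expect the only genuine subtlety to lie in this last step: one must confirm that $\d\nu_\xi\wedge\eta|_x\neq 0$ really does require $\eta_x\neq 0$ (immediate, as a wedge against a zero form vanishes) and that the relation $f\eta=0$ with $\eta_x\neq 0$ forces the scalar $f(x)=0$ (immediate in the fiber $\Lambda^{k-1}T_x^*M$). The conceptual content is entirely in recognizing that nondegeneracy of $\omega$ ties the support of the fundamental vector fields to the nonvanishing of $\eta$; the remaining manipulations are routine, and the symplectic case $k=1$, $\eta=1$ recovers the classical equivariance statement verbatim.
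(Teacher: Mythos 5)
Your proposal is correct and follows essentially the same route as the paper's own proof: both arguments use the nondegeneracy of $\omega$ together with $\iota_\xi\omega = \d\nu_\xi\wedge\eta$ to show that $\eta$ is nonvanishing off the fixed point set, and then cancel $\eta$ in the infinitesimal equivariance identity $(\L_\xi\nu_\zeta - \nu_{[\xi,\zeta]})\,\eta = 0$. The only cosmetic difference is that you derive $\L_\xi\mu_\zeta = \mu_{[\xi,\zeta]}$ from the Leibniz homomorphism property of the comoment map, while the paper cites the equivariance of $\mu$ built into the definition of a Hamiltonian $G$-space; these amount to the same fact.
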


\begin{proof}
	If $x\in M$ is not fixed by $G$, then there is a $\xi\in\g$ for which $\underline\xi_x\neq0$. The condition that
	\[
		\d\nu_\xi\wedge\eta = \iota_\xi\omega \neq 0
	\]
	implies that $\eta$ is nonzero at $x$. Since Definition \ref{def:moment_map} provides the equivariance of $\mu$, we conclude that
	\[
		\nu_{[\xi,\zeta]}\wedge\eta = \mu_{[\xi,\zeta]}=\L_\xi\mu_\zeta= \L_\xi\nu_\zeta\wedge\eta
	\]
	for all $\xi,\zeta\in\g$.
\end{proof}

\begin{proposition}
	If $\mu=\nu\wedge\eta$ is a splitting such that $\eta$ is $G$-horizontal, then 
	\begin{align*}
		\omega_\g:	\g	&\to		\Omega^*(M)	\\
				\xi	&\mapsto	\omega+\mu_\xi
	\end{align*}
	is an equivariantly closed differential form.
\end{proposition}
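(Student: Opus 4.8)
The plan is to verify the two conditions that together constitute equivariant closedness in the Cartan model: that $\omega_\g$ is a $G$-equivariant polynomial map $\g\to\Omega^*(M)$, and that it is annihilated by the Cartan differential $\d_\g$, which acts on equivariant forms by $(\d_\g\alpha)(\xi)=\d\big(\alpha(\xi)\big)-\iota_\xi\big(\alpha(\xi)\big)$. The minus sign here is the one compatible with the ``$+\mu_\xi$'' appearing in the statement.

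First I would establish equivariance. The constant term $\omega$ is $G$-invariant because the action is multisymplectic, and the linear term $\xi\mapsto\mu_\xi$ is $G$-equivariant by the defining property of the moment map of a Hamiltonian $G$-space (Definition \ref{def:moment_map}), where the coadjoint action on $\g^*$ is matched against the induced action on $\Lambda^{k-1}T^*M$. Hence the affine assignment $\xi\mapsto\omega+\mu_\xi$ is $G$-equivariant. I would also note that, assigning the polynomial generators degree $2$, the summands $\omega\in\Omega^{k+1}(M)$ and $\mu_\xi\in\Omega^{k-1}(M)$ share total degree $k+1$, so that $\omega_\g$ is homogeneous and lies in the appropriate graded piece of the Cartan complex.

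The main computation is then to expand $(\d_\g\omega_\g)(\xi)=\d\omega+\d\mu_\xi-\iota_\xi\omega-\iota_\xi\mu_\xi$ and show it vanishes. Here $\d\omega=0$ by closedness of $\omega$, while $\d\mu_\xi=\iota_\xi\omega$ is exactly the statement that $\mu_\xi$ is a Hamiltonian form for $\underline\xi$; these cancel against the term $-\iota_\xi\omega$. It remains to dispose of $\iota_\xi\mu_\xi$, and this is where the hypothesis enters: writing $\mu_\xi=\nu_\xi\,\eta$ with $\nu_\xi=\langle\nu,\xi\rangle\in C^\infty(M)$, we obtain $\iota_\xi\mu_\xi=\nu_\xi\,\iota_\xi\eta=0$ by the $G$-horizontality of $\eta$. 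Assembling the four terms gives $(\d_\g\omega_\g)(\xi)=0$ for every $\xi\in\g$.

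The step I expect to require the most care is conceptual rather than computational: recognizing that horizontality of $\eta$ is precisely the ingredient needed to kill $\iota_\xi\mu_\xi$. In the symplectic case $\mu_\xi$ is a function, so this term vanishes automatically and no condition on $\eta$ is needed; in the genuinely multisymplectic setting $\mu_\xi$ is a $(k-1)$-form and $\iota_\xi\mu_\xi$ need not vanish, so the assumption $\iota_\xi\eta=0$ is exactly what restores the classical identity and makes $\omega+\mu_\xi$ a $\d_\g$-cocycle.
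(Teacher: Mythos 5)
Your proof is correct and follows essentially the same route as the paper: expand the Cartan differential as $\d_\g(\omega+\mu)(\xi) = \d\omega + (\d\mu_\xi - \iota_\xi\omega) - \iota_\xi\mu_\xi$, note that the first term vanishes by closedness, the middle cancellation is the Hamiltonian condition, and the last term $\iota_\xi\mu_\xi = \nu_\xi\,\iota_\xi\eta$ dies by $G$-horizontality of $\eta$. The additional remarks on equivariance and grading in the Cartan model are fine but not part of the paper's (more terse) argument.
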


\begin{proof}
	From $\iota_\xi\mu=\nu\wedge\iota_\xi\eta=0$, we have
	\[
		\d_\g(\omega+\mu)(\xi) = \d\omega + (\d\mu_\xi - \iota_\xi\omega) - \iota_\xi\mu_\xi = 0
	\]
	for all $\xi\in\g$.
\end{proof}

We will have more to say about equivariant cohomology in Section \ref{sec:localization}. As in the symplectic setting \cite{Audin04}, we obtain stronger results for abelian group actions $T$.

\begin{proposition}
	If the action of a torus $T$ on a compact multisymplectic manifold $(M,\omega)$ admits a split moment map $\mu=\nu\wedge\eta$, then the fixed point set of $T$ is nonempty.
\end{proposition}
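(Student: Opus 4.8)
The plan is to reduce the statement to the single-variable result of Proposition \ref{prop:vanishing_vector_field} by selecting a generic one-parameter subgroup of $T$. Concretely, since $T$ is a torus I would first fix $\xi\in\t$ generating a dense one-parameter subgroup, $\overline{\{e^{t\xi}:t\in\R\}}=T$; such $\xi$ exist by Kronecker's theorem (take coordinates rationally independent relative to the integral lattice of $T$). With the convention $\underline\xi_x=\frac{\d}{\d t}e^{-t\xi}x\big|_{t=0}$, a point $x$ satisfies $\underline\xi_x=0$ exactly when $x$ is fixed by the whole subgroup $\{e^{t\xi}\}$, and by density and continuity of the action this is equivalent to $x$ being fixed by all of $T$. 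Thus the fixed-point set $M^T$ coincides with $\{x\in M:\underline\xi_x=0\}$, and it suffices to produce a single zero of the fundamental vector field $\underline\xi$.

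Next I would exploit the split structure. Since $\mu=\nu\wedge\eta$, the component $\mu_\xi=\langle\mu,\xi\rangle$ equals $\nu_\xi\,\eta$, a Hamiltonian $(k-1)$-form proportional to the closed form $\eta$, whose associated Hamiltonian vector field is $\underline\xi=X_{\mu_\xi}$. Proposition \ref{prop:vanishing_vector_field} then applies verbatim: from $\iota_\xi\omega=\d\mu_\xi=\d\nu_\xi\wedge\eta$ and the nondegeneracy of $\omega$, every critical point of $\nu_\xi$ is a zero of $\underline\xi$. Because $M$ is compact, the function $\nu_\xi\in C^\infty(M)$ attains a maximum, so its critical set is nonempty. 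Combining the two steps yields $\emptyset\neq\mathrm{Crit}(\nu_\xi)\subset\{\underline\xi=0\}=M^T$.

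The only genuinely delicate point is the density step: one must confirm that a generic direction $\xi$ identifies the zero locus of $\underline\xi$ with the full fixed-point set, rather than merely with the fixed locus of some proper subtorus — everything else is a direct invocation of the earlier proposition together with compactness. It is worth emphasizing that the splitting hypothesis is precisely what makes $\mu_\xi$ proportional to a closed form and hence brings Proposition \ref{prop:vanishing_vector_field} to bear; for a general (non-split) moment map the component $\mu_\xi$ need not be of the form $f\eta$, and this critical-point argument would not be available.
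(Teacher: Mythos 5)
Your proof is correct and follows essentially the same route as the paper: fix a generator $\xi\in\t$ of $T$, note that the split hypothesis makes $\mu_\xi=\nu_\xi\,\eta$ proportional to the closed form $\eta$, invoke Proposition \ref{prop:vanishing_vector_field} together with compactness to produce a zero of $\underline\xi$, and identify the zero set of $\underline\xi$ with the fixed-point set of $T$. Your elaboration of the density argument (Kronecker) simply makes explicit what the paper compresses into the phrase ``generator'' and ``connectedness of $T$.''
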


\begin{proof}
	Fix a generator $\xi\in\t$ of $T$, so that $\underline\xi$ generates the orbits of $T$ in $M$. Since $\mu_\xi=\nu_\xi\wedge\eta$ is proportional to the closed form $\eta$, Proposition \ref{prop:vanishing_vector_field} implies that $\underline\xi$ has nonempty vanishing set which, by the connectedness of $T$, constitutes the fixed point set of the action of $T$.
\end{proof}


\section{Reduction}\label{sec:reduction}

Let $(M,\omega,G,\mu)$ be a multisymplectic Hamiltonian $G$-space and let $\phi\in\Omega^{k-1}(M,\g^*)$ be a closed form. By identifying $\phi$ with its image in $\g^*\otimes\Lambda^{k-1}T^*M$, and by considering $\mu$ as a smooth function from $M$ to $\g^*\otimes\Lambda^{k-1}T^*M$, we denote by $\mu^{-1}(\phi)\subset M$ the set of points on which $\mu$ and $\phi$ agree. That is,
\[
	\mu^{-1}(\phi) = \{x\in M \,|\, \mu(x) = \phi(x)\}.
\]
In this setting, the isotropy subgroup $G_\phi\subset G$ consists of those elements $g\in G$ for which
\[
	\mathrm{Ad}_g^*\,\phi_{g^{-1}x}(g_*^{-1}X_1,\ldots,g_*^{-1}X_{k-1}) = \phi_x(X_1,\ldots,X_{k-1})
\]
for all $x\in M$ and $X_i\in T_xM$, where $\phi_{g^{-1}x}$ and $\phi_x$ denote the values of $\phi$ at $g^{-1}x$ and $x$, respectively.

We now present our main result.

\begin{theorem}[Multisymplectic reduction]\label{thm:reduction}
	Let $(M,\omega,G,\mu)$ be a $k$-plectic Hamiltonian $G$-space with moment map $\mu$, let $\phi\in\Omega^{k-1}(M,\g^*)$ be a closed form, and let $M_\phi= \mu^{-1}(\phi)/G_\phi$. If $\mu^{-1}(\phi)\subset M$ is an embedded submanifold and $G$ acts freely on $\mu^{-1}(\phi)$, then there is a unique, closed $\omega_\phi\in\Omega^{k+1}(M_\phi)$ satisfying $i^*\omega = \pi^*\omega_\phi$, where $i:\mu^{-1}(\phi)\to M$ is the inclusion and $\pi:\mu^{-1}(\phi)\to M_\phi$ is the quotient map.
	\begin{center}
		\begin{tikzpicture}
			\node (A) at (0,0) {$\mu^{-1}(\phi)$};
			\node (B) at (1.8,0) {$M$};
			\node (C) at (0,-1.8) {$M_\phi$};

			\draw[->] (A) to node[above] {$i$} (B);
			\draw[->] (A) to node[left] {$\pi$} (C);
		\end{tikzpicture}
	\end{center}
\end{theorem}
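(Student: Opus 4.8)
The plan is to follow the template of the symplectic proof: show that the pulled-back form $i^*\omega$ is \emph{basic} with respect to $\pi$, so that it descends uniquely to the sought-after $\omega_\phi$. First I would establish the smooth structure on $M_\phi$. The isotropy group $G_\phi$ preserves $\mu^{-1}(\phi)$: for $g\in G_\phi$ and $x\in\mu^{-1}(\phi)$, the equivariance of $\mu$ gives $\mu(gx) = g\cdot\mu(x) = g\cdot\phi(x) = \phi(gx)$, where the final equality is precisely the defining condition for membership in $G_\phi$. Since $G$ is compact its action is proper, and since $G$ acts freely on $\mu^{-1}(\phi)$ so does the subgroup $G_\phi$; hence the restricted $G_\phi$-action is free and proper and $\pi:\mu^{-1}(\phi)\to M_\phi$ is a principal $G_\phi$-bundle, in particular a surjective submersion.

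It then suffices to verify that $i^*\omega$ is $G_\phi$-invariant and $G_\phi$-horizontal, since a basic form on the total space of a principal bundle descends uniquely to the base. Invariance is immediate: the $G$-action preserves $\omega$, so for each $g\in G_\phi$ one has $g^*(i^*\omega) = i^*(g^*\omega) = i^*\omega$. For horizontality I must show $\iota_\xi(i^*\omega) = 0$ for every $\xi\in\g_\phi$. As $\underline\xi$ is tangent to $\mu^{-1}(\phi)$, this equals $i^*(\iota_\xi\omega) = i^*\d\mu_\xi = \d(i^*\mu_\xi)$. The crux is that on $\mu^{-1}(\phi)$ the sections $\mu$ and $\phi$ agree pointwise, so that $i^*\mu_\xi = i^*\phi_\xi$; together with the closedness of $\phi$ this yields
\[
	\iota_\xi(i^*\omega) = \d(i^*\mu_\xi) = \d(i^*\phi_\xi) = i^*(\d\phi)_\xi = 0.
\]

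With $i^*\omega$ shown to be basic, there is a unique $\omega_\phi\in\Omega^{k+1}(M_\phi)$ satisfying $\pi^*\omega_\phi = i^*\omega$, uniqueness following from the injectivity of $\pi^*$ for the surjective submersion $\pi$. Closedness is then automatic, since $\pi^*\d\omega_\phi = \d\pi^*\omega_\phi = \d i^*\omega = i^*\d\omega = 0$ forces $\d\omega_\phi = 0$ by the same injectivity. The main obstacle, and the only step that genuinely uses the multisymplectic hypotheses, is the horizontality computation: here the closedness of the $\g^*$-valued form $\phi$ plays exactly the role that regularity of a value $\lambda\in\g^*$ plays in the symplectic theorem. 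I would emphasize that the argument nowhere invokes the nondegeneracy of $\omega$, which is precisely the reason the reduced form $\omega_\phi$ may fail to be nondegenerate.
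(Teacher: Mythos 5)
Your proof is correct and follows essentially the same route as the paper's: both establish that $G_\phi$ preserves $\mu^{-1}(\phi)$ by equivariance, show $i^*\omega$ is invariant and horizontal via the computation $\iota_\xi\, i^*\omega = i^*\d\mu_\xi = i^*\d\phi_\xi = 0$, and obtain uniqueness and closedness of $\omega_\phi$ from the injectivity of $\pi^*$. Your additional remarks (the principal $G_\phi$-bundle structure and the observation that nondegeneracy of $\omega$ is never used) are consistent with the paper, the latter appearing there as a remark immediately following the proof.
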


\begin{proof}
	First observe that the conditions on the inclusion $i$ and the action of $G$ imply that $M_\phi$ is a smooth manifold. The action of $G_\phi$ preserves $\mu^{-1}(\phi)$ by the equivariance of $\mu:M\to\Lambda^{k-1}T^*M\otimes\g^*$, and $i^*\phi=i^*\mu$ implies
	\[
		\iota_\xi \,i^*\omega = i^*\d\mu_\xi = i^*\d\phi_\xi= 0
	\]
	for all $\xi\in\g_\phi$. Thus we have shown $i^*\omega$ to be horizontal and invariant and it follows by the smoothness of the quotient map $\pi$ that $i^*\omega$ descends to $M_\phi$. Since $\pi_*:T\mu^{-1}(\phi)\to TM_\phi$ is surjective on fibers, it follows that the dual map $\pi^*:T^*M_\phi\to T\mu^{-1}(\phi)$ is injective. We conclude that $\omega_0$ is unique and that $\d\pi^*\omega_\phi=0$ implies $\d\omega_\phi=0$.
\end{proof}

We call $M_\phi$ the \emph{reduced space} and $\omega_\phi$ the \emph{reduced $(k+1)$-form} or the \emph{reduced premultisymplectic structure}. Note that $\omega_\phi$ is not necessarily a multisymplectic structure, since it may be degenerate. This property is shared with the theory of polysymplectic reduction \cite{MarreroRoman-RoySalgadoVilarino15}.

\begin{remark}
	If $\omega\in\Omega^2(M)$ is a symplectic structure and $\phi\in C^\infty(M,\g^*)$ has constant value $\lambda\in\g^*$. Then $G_\phi$ is the stabilizer of $\lambda$ under the coadjoint action on $\g^*$ and $M_\phi= \mu^{-1}(\lambda)/G_\lambda$ is the usual symplectic reduced space.
\end{remark}

\begin{remark}\label{rem:multisymplectic_reduction_remark}
	The proof of Theorem \ref{thm:reduction} does not invoke the nondegeneracy or homogeneity of the multisymplectic structure. Indeed, the result extends naturally to a tuple $(M,\omega,G,\mu)$ consisting of a smooth manifold $M$, a form $\omega\in\Omega^*(M)$, an action of a Lie group $G$ on $M$ preserving $\omega$, an equivariant form $\mu\in\Omega^*(M,\g^*)$ satisfying the identity $\d\mu_\xi=\iota_\xi\omega$, and a closed form $\phi\in\Omega^*(M,\g^*)$ satisfying the condition that $\mu^{-1}(\phi)\subset M$ is an embedded submanifold on which $G$ restricts to a free action. The result is a smooth \emph{reduced space} $M_\phi=\mu^{-1}(\phi)/G$ equipped with a \emph{reduced form} $\omega_\phi\in\Omega^*(M_\phi)$.
\end{remark}

\begin{example}\label{eg:standard_examples_reduction}
	We continue the analysis of Examples \ref{eg:standard_examples_manifolds} and \ref{eg:standard_examples_Hamiltonian_systems}.
	\begin{enumerate}[i.]
		\item The reduction of $(M,\sigma^\ell,G,\,\ell\nu\wedge\sigma^{\ell-1})$ at the level $\phi=\ell\lambda\wedge\sigma^{\ell-1}$, where we identify $\lambda\in\g^*$ with the corresponding constant function on $M$, is the symplectic reduced space $M_\lambda$ equipped with the premultisymplectic structure $\sigma_\lambda^\ell$, which is either nondegenerate or constantly zero.

		\item Let $T\subset G$ be a subtorus of a compact connected Lie group $G$. Thus, $\theta|_\t,\bar\theta|_\t\in\Omega^1(G,\t^*)$ are closed and
			\[
				\ker \hspace{1pt}(\bar\theta - \theta)|_\t = \ker\hspace{1pt}\mathrm{Ad}|_\t = N_GT,
			\]
			where $N_GT\subset G$ is the normalizer of $T$ in $G$, and where $\Ad|_\t:G\to\mathrm{End}(\t,\g)$ is the application of the adjoint map on $\t\subset\g$. If $T$ acts on $G$ by left multiplication, with moment map $\mu = \langle\bar\theta|_\t,\,\rangle\in\Omega^1(G,\t^*)$, then the $T$-invariance of $\theta|_\t$ and the property that
			\[
				\mu^{-1}(\theta|_\t) = \big\{\theta|_\t = \bar\theta|_\t\big\} = N_GT
			\]
			imply that the reduced space is $G_{\theta|_\t} = N_G(T)/T$. In particular, if $T$ is a maximal torus then the reduced space is given by the Weyl group $W(G,T)$ and the reduced premultisymplectic structure is the zero form.
		\item Consider the lift of an action of $G$ on $E\to\Sigma$ to the total space of $\Lambda_\ell^k\hspace{1pt}T^*E\to E$. The $0$-level set of the canonical moment map $\mu:\gamma\mapsto-\iota_{\xi_E}\gamma$ contains precisely the $G$-horizontal elements of $\Lambda_\ell^k\hspace{1pt}T^*E$, with respect to the action of $G$ on $E$, and thus the associated reduced space is $(\Lambda_\ell^k\hspace{1pt}T^*E)_0 = (\Lambda_\ell^k\hspace{1pt}T^*E)_{\text{$G$-hor.}}/G$.
	\end{enumerate}
\end{example}

\begin{theorem}[Reduction of Dynamics]\label{thm:reduction_of_dynamics}
	Let $(M,\omega,G,\mu)$ and $\phi$ be given as in Theorem \ref{thm:reduction} and let $\alpha\in\Omega_H^{k-\ell}(M)$ be $G$-invariant with associated Hamiltonian $\ell$-vector field $X\in\X^\ell(M)$. If
	\begin{enumerate}[i.]
		\item $X$ is $G$-invariant and tangent to $\mu^{-1}(\phi)$, or
		\item $\ell=1$ and $\{\alpha,\mu_\xi\}=\{\alpha,\phi_\xi\}=0$ for all $\xi\in\g$,
	\end{enumerate}
	then $X$ and $\d\alpha$ descend to $\bar{X}\in\X^\ell(M_\phi)$ and $\overline{\d\alpha}\in\Omega^{k-\ell}(M_\phi)$, respectively, and $\overline{\d\alpha} = \iota_{\bar X}\omega_\phi$.
\end{theorem}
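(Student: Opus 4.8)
The plan is to reduce both hypotheses to the single situation in which $X$ is $G$-invariant and tangent to $\mu^{-1}(\phi)$, and then to descend $X$ and $\d\alpha$ separately before checking the stated identity. Under hypothesis (i) these two properties of $X$ are assumed outright (and since for $\ell>1$ a Hamiltonian $\ell$-vector field need not be unique, invariance genuinely must be assumed rather than derived); under (ii) I would derive them, so it suffices to treat (i) once (ii) has been reduced to it. Throughout I would use the defining relation $\pi^*\omega_\phi=i^*\omega$ of Theorem \ref{thm:reduction}, the naturality of the interior product, and the injectivity of $\pi^*$ noted in its proof.

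First, assume (ii), so that $\ell=1$ and $X=X_\alpha\in\X(M)$. Because $\{\alpha,\beta\}=\L_{X_\alpha}\beta$, and because the closedness of $\phi$ makes each $\phi_\xi$ a Hamiltonian form whose Hamiltonian vector field is zero, the hypotheses become $\L_X\mu_\xi=0$ and $\L_X\phi_\xi=0$, hence $\L_X(\mu_\xi-\phi_\xi)=0$ for all $\xi\in\g$. Setting $\psi=\mu-\phi\in\Omega^{k-1}(M,\g^*)$, this says the flow $\Phi_t$ of $X$ satisfies $\Phi_t^*\psi=\psi$; since $\Lambda^{k-1}\d\Phi_t$ is fibrewise an isomorphism, the zero locus $\psi^{-1}(0)=\mu^{-1}(\phi)$ is $\Phi_t$-invariant, so $X$ is tangent to $\mu^{-1}(\phi)$. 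Invariance of $X$ follows from the nondegeneracy of $\omega$: pulling $\d\alpha=\iota_X\omega$ back by any $g\in G$ and using $g^*\alpha=\alpha$, $g^*\omega=\omega$ shows $(g^{-1})_*X$ is again the Hamiltonian vector field of $\alpha$, so $(g^{-1})_*X=X$ by uniqueness. Thus (ii) implies the hypotheses of (i).

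Now assume (i). Restricting the $G$-invariant field $X$ to $\mu^{-1}(\phi)$ gives a $G_\phi$-invariant $\ell$-vector field $X|$ there, and since $\pi$ is a principal $G_\phi$-bundle its pushforward $\bar X=\pi_* X|\in\X^\ell(M_\phi)$ is well defined and $\pi$-related to $X|$. For the form, $G$-invariance of $\alpha$ gives invariance of $i^*\d\alpha$, and for horizontality I would compute, for $\xi\in\g_\phi$ (so that $\underline\xi$ is tangent to $\mu^{-1}(\phi)$),
\[
	\iota_\xi\,\d\alpha=\iota_\xi\iota_X\omega=(-1)^\ell\,\iota_X\iota_\xi\omega=(-1)^\ell\,\iota_X\,\d\mu_\xi.
\]
Pulling back by $i$, the tangency of $X$ together with $i^*\d\mu_\xi=\d\,i^*\mu_\xi=\d\,i^*\phi_\xi=i^*\d\phi_\xi=0$ gives $\iota_\xi\,i^*\d\alpha=i^*\iota_\xi\d\alpha=0$. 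Hence $i^*\d\alpha$ is basic and descends to $\overline{\d\alpha}$ with $\pi^*\overline{\d\alpha}=i^*\d\alpha$.

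It remains to verify $\overline{\d\alpha}=\iota_{\bar X}\omega_\phi$, which by injectivity of $\pi^*$ reduces to comparing the two sides after applying $\pi^*$. On one hand $\pi^*\overline{\d\alpha}=i^*\d\alpha=i^*\iota_X\omega=\iota_{X|}\,i^*\omega$ by tangency of $X$; on the other, naturality of the interior product along the $\pi$-related pair together with $\pi^*\omega_\phi=i^*\omega$ gives $\pi^*(\iota_{\bar X}\omega_\phi)=\iota_{X|}\,\pi^*\omega_\phi=\iota_{X|}\,i^*\omega$. The two coincide, completing the proof. I expect the main obstacle to be the tangency step under (ii): the content is to read the two bracket conditions as the single identity $\L_X(\mu-\phi)=0$ and then to argue that the flow of $X$ preserves the zero set of the $\g^*$-valued form $\mu-\phi$, which is exactly where the form-valued nature of the moment map enters and where the fibrewise invertibility of $\Lambda^{k-1}\d\Phi_t$ is needed. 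The accompanying sign and index bookkeeping for contracting an $\ell$-vector field is routine.
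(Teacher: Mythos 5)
Your proof is correct and follows essentially the same route as the paper's: case (ii) is reduced to case (i) via $\L_X(\mu_\xi-\phi_\xi)=\{\alpha,\mu_\xi-\phi_\xi\}=0$ forcing tangency, and in case (i) the $G$-invariance together with the identity $i^*\mu=i^*\phi$ makes $X$ and $i^*\d\alpha$ descend, with $\overline{\d\alpha}=\iota_{\bar X}\omega_\phi$ verified through the injectivity of $\pi^*$. The only cosmetic differences are that you push the restricted multivector field forward directly by $\Lambda^\ell\d\pi$ where the paper first reduces to decomposable $X=X_1\wedge\ldots\wedge X_\ell$ with invariant, tangent factors, and that you spell out steps the paper leaves implicit (the flow argument for tangency, and the uniqueness argument giving $G$-invariance of $X$ when $\ell=1$).
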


Note that the brackets in case ii.\ are well-defined since in the $\ell=1$ setting $\alpha$ is a Hamiltonian $(k-1)$-form.

\begin{proof}
	\begin{enumerate}[i.]
		\item It suffices to consider $X=X_1\wedge\ldots\wedge X_\ell\in\X^\ell(M)$ where each $X_i\in\X(M)$ is $G$-invariant and tangent to $\mu^{-1}(\phi)$. Invariance implies that the restriction of $X$ to $\mu^{-1}(\phi)$ descends by $\pi:\mu^{-1}(\phi)\to M_\phi$ to $\pi_*X_1\wedge\ldots\wedge\pi_*X_\ell$. Using the identity $i^*\mu=i^*\phi$, we further deduce that
		\[
			\iota_\xi i^*\d\alpha= i^*\iota_\xi\iota_X\omega = -\iota_X i^*\d\phi_\xi = 0
		\]
		and it follows again by equivariance that $\d\alpha$ descends to $M_\phi$. The identity $\overline{\d\alpha} = \iota_{\bar X}\omega_\phi$ is clear.
	\item The equality $\L_X(\mu_\xi - \phi_\xi) = \{\alpha,\mu_\xi-\phi_\xi\} = 0$ implies that $X$ is tangent to $\mu^{-1}(\phi)$, and the result follows by part i.
	\end{enumerate}
\end{proof}

We call $\bar{X}$ the \emph{reduced multivector field}, or the \emph{reduced dynamics}, associated to $\alpha$. If $\alpha$ descends to $\bar\alpha\in\Omega^{k-\ell}(M_\phi)$ then we call $\bar\alpha$ the \emph{reduced Hamiltonian form}.

\begin{proposition}\label{prop:split_moment_map_free_action}
	If $\mu=\nu\wedge\eta$ is an invariant splitting, and if $\lambda\in\g^*$ is chosen so the action of $G$ on $\mu^{-1}(\lambda\wedge\eta)$ is locally free, then
	\begin{enumerate}[i.]
		\item $\nu^{-1}(\lambda) = \mu^{-1}(\lambda\wedge\eta)$,
		\item $\lambda$ is a regular value of $\nu$, in particular $\nu^{-1}(\lambda)\subset M$ is smooth, and
		\item $M_{\lambda\eta} = \nu^{-1}(\lambda)/G_\lambda$.
	\end{enumerate}
\end{proposition}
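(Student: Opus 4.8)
The plan is to derive all three claims from the single identity
\[
	\iota_\xi\omega = \d\mu_\xi = \d(\nu_\xi\,\eta) = \d\nu_\xi\wedge\eta,
\]
valid for every $\xi\in\g$ because $\eta$ is closed. The recurring mechanism is that nondegeneracy of $\omega$ converts the hypothesis of local freeness—which is exactly the statement that $\underline\xi_x=0$ forces $\xi=0$—into a nonvanishing statement: whenever $(\d\nu_\xi\wedge\eta)_x=0$ the identity gives $(\iota_\xi\omega)_x=0$, hence $\underline\xi_x=0$, hence $\xi=0$. I would first record the consequence that $\eta$ is nonvanishing along $\mu^{-1}(\lambda\wedge\eta)$. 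Indeed, if $\eta_x=0$ at such a point, then $(\iota_\xi\omega)_x=(\d\nu_\xi)_x\wedge\eta_x=0$ for \emph{every} $\xi\in\g$, so $\underline\xi_x=0$ for all $\xi$, contradicting local freeness unless $\g=0$ (in which case every assertion is trivial).

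For part i, the inclusion $\nu^{-1}(\lambda)\subset\mu^{-1}(\lambda\wedge\eta)$ is immediate from $\mu=\nu\wedge\eta$. For the reverse inclusion I would fix $x\in\mu^{-1}(\lambda\wedge\eta)$, so that $(\nu(x)-\lambda)\wedge\eta_x=0$ in $\g^*\otimes\Lambda^{k-1}T_x^*M$; pairing with an arbitrary $\xi\in\g$ yields $\langle\nu(x)-\lambda,\xi\rangle\,\eta_x=0$, and since $\eta_x\neq0$ by the preceding step this forces $\langle\nu(x)-\lambda,\xi\rangle=0$ for all $\xi$, i.e.\ $\nu(x)=\lambda$. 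For part ii, I would show $\d\nu_x:T_xM\to\g^*$ is surjective at each $x\in\nu^{-1}(\lambda)$, equivalently that the transpose $\xi\mapsto(\d\nu_\xi)_x$ is injective. This is precisely the mechanism above: if $(\d\nu_\xi)_x=0$ then $(\iota_\xi\omega)_x=0$, so $\xi=0$. Thus $\lambda$ is a regular value and $\nu^{-1}(\lambda)$ is an embedded submanifold.

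For part iii it remains to identify the isotropy group $G_{\lambda\wedge\eta}$ with $G_\lambda$, after which part i gives $M_{\lambda\eta}=\mu^{-1}(\lambda\wedge\eta)/G_{\lambda\wedge\eta}=\nu^{-1}(\lambda)/G_\lambda$. Here I would unwind the defining condition of $G_\phi$ for $\phi=\lambda\wedge\eta$: using the $G$-invariance of $\eta$, the condition $g\cdot(\lambda\wedge\eta)=\lambda\wedge\eta$ becomes $(\Ad_g^*\lambda-\lambda)\wedge\eta=0$ pointwise on $M$. Since $\eta$ is nonvanishing at some point (for instance on the level set, which we may assume nonempty), pairing with each $\xi\in\g$ shows this is equivalent to $\Ad_g^*\lambda=\lambda$, so $G_{\lambda\wedge\eta}=G_\lambda$. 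The computations here are all routine once the engine identity is isolated; the only genuine bookkeeping—and thus the part I would treat most carefully—is this last equivalence, namely confirming that $(\Ad_g^*\lambda-\lambda)\wedge\eta=0$ collapses to $\Ad_g^*\lambda=\lambda$ by invoking the nonvanishing of $\eta$, and checking the direction of the $G$-action on $\g^*$-valued forms that appears in the definition of $G_\phi$.
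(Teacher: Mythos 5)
Your proposal is correct and follows essentially the same route as the paper: the identity $\iota_\xi\omega=\d\nu_\xi\wedge\eta$ plus nondegeneracy of $\omega$ converts local freeness into nonvanishing of $\eta$ along the level set, which yields part i, the injectivity of $\xi\mapsto\d\nu_\xi$ (hence regularity) for part ii, and the identification $G_{\lambda\wedge\eta}=G_\lambda$ via $G$-invariance of $\eta$ for part iii. Your write-up merely spells out two steps the paper leaves terse (the reverse inclusion in part i and the isotropy-group comparison), and does so correctly.
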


\begin{proof}
	\begin{enumerate}[i.]
		\item Clearly $\nu^{-1}(\lambda)\subset \mu^{-1}(\lambda\wedge\eta)$. The freeness of the induced Lie algebra action $\xi\mapsto\underline\xi$ along $\mu^{-1}(\lambda\wedge\eta)$ implies that
			\[
				\d\nu_\xi\wedge\eta = \iota_\xi\omega \neq 0,	\hspace{1.8cm}\xi\in\g\backslash\{0\}
			\]
			and consequently that $\eta$ is nonvanishing along $\mu^{-1}(\lambda\wedge\eta)$. The reverse inclusion follows.
		\item Part i.\ implies that the action of $G$ is locally free on $\nu^{-1}(\lambda)$. Thus, if $x\in\nu^{-1}(\lambda)$ then $\nu_*:T_xM\to T_\lambda\g^*$ is surjective since the condition
			\[
				\d\nu_\xi\wedge\eta = \iota_\xi\omega \neq 0,	\hspace{1.8cm}\xi\in\g\backslash\{0\}
			\]
			at $x$ implies that the dual map
			\begin{align*}
				\g	&\to		T_xM	\\
				\xi	&\mapsto	\langle\nu_*,\xi\rangle = \d\nu_\xi
			\end{align*}
			is injective.
		\item We deduce from the $G$-invariance of $\eta$ that $G_{\lambda\eta}=G_\lambda$, and the result follows by part i.
	\end{enumerate}
\end{proof}


\section{Variation of the reduced space}\label{sec:DH}

Let $(M,\omega,G,\mu)$ be a multisymplectic Hamiltonian $G$-space. Our aim in this section is to investigate the dependence of the reduced space $M_\phi=\mu^{-1}(\phi)/G_\phi$ on the closed form $\phi\in\Omega^{k-1}(M,\g^*)$. Our notation and approach follow \cite[Section 2]{DuistermaatHeckman82}.

We begin with an example.

\begin{example}
	Let $T$ be a torus and consider the $(2\ell-1)$-plectic Hamiltonian $T$-space $(M^{2n},\sigma^\ell,T,\ell\nu\wedge\sigma^{\ell-1})$ associated to the symplectic Hamiltonian $T$-space $(M,\sigma,T,\nu)$. The Duistermaat--Heckman theorem \cite[Theorem 1.1]{DuistermaatHeckman82} asserts that, for $\lambda$ and $\tau$ in the same connected component $C\subset\g^*$ of regular values of $\nu$,
	\[
		[\sigma_\lambda] = [\sigma_\tau] + \langle c,\lambda-\tau\rangle.
	\]
	Thus, according to Example \ref{eg:standard_examples_reduction}, the cohomology class of the reduced form
	\[
		[\sigma_{\ell\lambda\sigma^{\ell-1}}^\ell] = \big([\sigma_\tau] + \langle c,\lambda-\tau\rangle\big)^\ell
	\]
	exhibits polynomial dependence on $\lambda\in C$.
\end{example}

Before specializing to torus actions, let us first consider the general situation. Fix two $G$-invariant closed forms $\phi,\psi\in\Omega^{k-1}(M,\g^*)$ such that $\mu^{-1}(\phi)$ and $\mu^{-1}(\psi)\subset M$ are embedded submanifolds on which $G$ acts freely, let $\epsilon>0$, let $\ell\subset\Omega^{k-1}(M,\g^*)$ be the affine hull of $\phi-\epsilon\psi$ and $\phi+\epsilon\psi$ in $\Omega^{k-1}(M,\g^*)$, and suppose the diagram
\begin{center}
	\begin{tikzpicture}
		\node (A) at (0,0) {$\mu^{-1}(\ell)$};
		\node (B) at (2.5,0) {$\mu^{-1}(\phi) \times \ell$};
		\node (C) at (0,-1.8) {$\ell$};

		\draw[->] (A) to node[above] {$\sim$} (B);
		\draw[->] (A) to node[left] {$\mu$} (C);
		\draw[->] (B) to node[below right] {$\pi_2$} (C);
	\end{tikzpicture}
\end{center}
endows $\mu^{-1}(\ell)\to\ell$ with the structure of a trivialized fiber bundle with typical fiber $\mu^{-1}(\phi)$. As the diagram is equivariant we may take quotients by $G$ and descend to cohomology along the fibers to obtain
\begin{center}
	\begin{tikzpicture}
		\node (A) at (0,0) {$H_v^{k+1}(M_\ell)$};
		\node (B) at (2.5,0) {$H^{k+1}(M_\phi) \times \ell$};
		\node (C) at (0,-1.8) {$\ell$};

		\draw[->] (A) to node[above] {$\sim$} (B);
		\draw[->] (A) to node[left] {$\bar\mu$} (C);
		\draw[->] (B) to node[below right] {$\pi_2$} (C);
	\end{tikzpicture}
\end{center}
where $H_v^*(M_\ell)$ denotes the $\mu$-vertical cohomology of $M_\ell=\mu^{-1}(\ell)/G$. A key observation of \cite{DuistermaatHeckman82} is that that this trivialization is independent of the original identification $\mu^{-1}(\ell)\overset{\sim}{\longrightarrow}\mu^{-1}(\phi)\times\ell$. Thus, while the variation of the reduced form in terms of this trivialization,
\[
	\partial_\psi\hspace{1pt}\omega_\phi = \frac{\d}{\d t}\Big|_{t=0}\,\omega_{\phi+t\psi} \in \Omega^{k+1}(M_\phi),
\]
depends on the particular identification of $\mu^{-1}(\ell)$ with $\mu^{-1}(\phi)\times\ell$, the variation of the cohomology, $\partial_\psi\hspace{1pt}[\omega_\phi] \in H^{k+1}(M_\phi)$, depends only on $\phi$ and $\psi$.

We will also consider the case in which the $1$-dimensional parameter space $\ell$ is replaced by a neighborhood $P$ on an affine subspace of $\Omega^{k-1}(M,\g^*)$ modeled on $\g^*$.

\begin{lemma}\label{lem:general_variation}
	If $\tilde\psi\in\X(\mu^{-1}(\ell))$ is identified with $\partial_\psi\in\X(\ell)$ under the trivialization $\mu^{-1}(\ell)\overset{\sim}{\longrightarrow}\mu^{-1}(\phi)\times\ell$, then
	\[
		\pi^*\partial_\psi\omega_\phi = \d\hspace{1pt}i^*\iota_{\tilde\psi}\omega,
	\]
	in terms of this trivialization, where $\pi:\mu^{-1}(\phi)\to M_\phi$ is the quotient map on the model space and $i:\mu^{-1}(\phi)\to M$ is the inclusion.
\end{lemma}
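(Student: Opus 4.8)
The plan is to differentiate the defining relation of the reduced form along the parameter $t\in\ell$ and to evaluate the resulting derivative of a pullback by Cartan's formula, the closedness of $\omega$ being exactly what converts the Lie derivative into an exact form.

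First I would parameterize $\ell$ by $t\mapsto\phi+t\psi$, so that $\partial_\psi=\partial_t$, and unwind the trivialization $\mu^{-1}(\ell)\overset{\sim}{\longrightarrow}\mu^{-1}(\phi)\times\ell$ into a smooth family of embeddings $i_t\colon\mu^{-1}(\phi)\to M$ whose image is the level set $\mu^{-1}(\phi+t\psi)$. Concretely, $i_t=\hat\imath\circ j_t$, where $j_t\colon\mu^{-1}(\phi)\to\mu^{-1}(\ell)$ is the slice $x\mapsto(x,t)$ and $\hat\imath\colon\mu^{-1}(\ell)\hookrightarrow M$ is the inclusion. Because the diagram is $G$-equivariant, passing to quotients identifies every reduced space $M_{\phi+t\psi}$ with the model $M_\phi$ and every quotient map with the fixed $\pi$, so Theorem~\ref{thm:reduction} supplies the one-parameter family of identities $\pi^*\omega_{\phi+t\psi}=i_t^*\omega$ on $\mu^{-1}(\phi)$. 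This family is smooth in $t$ by smoothness of the trivialization, and differentiating at $t=0$ gives $\pi^*\partial_\psi\omega_\phi=\frac{\d}{\d t}\big|_{t=0}\,i_t^*\omega$.

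It remains to compute the right-hand side. Writing $\Omega=\hat\imath^*\omega\in\Omega^{k+1}(\mu^{-1}(\ell))$ and $i_t^*\omega=j_t^*\Omega$, the slices satisfy $j_t=\Theta_t\circ j_0$, where $\Theta_t$ is the flow of the trivialized direction $\tilde\psi$ (the vector field on $\mu^{-1}(\ell)$ identified with $\partial_t$), so $\frac{\d}{\d t}\big|_{t=0}\,j_t^*\Omega=j_0^*\L_{\tilde\psi}\Omega$. The crucial input is that $\omega$ is closed: then $\d\Omega=\hat\imath^*\d\omega=0$ and Cartan's formula collapses to $\L_{\tilde\psi}\Omega=\d\,\iota_{\tilde\psi}\Omega$. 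Since $\tilde\psi$ is tangent to $\mu^{-1}(\ell)$, contraction commutes with $\hat\imath^*$, giving $\iota_{\tilde\psi}\hat\imath^*\omega=\hat\imath^*\iota_{\tilde\psi}\omega$; and as $\hat\imath\circ j_0=i$ is the inclusion of $\mu^{-1}(\phi)$, I conclude $\frac{\d}{\d t}\big|_{t=0}\,i_t^*\omega=j_0^*\,\d\,\iota_{\tilde\psi}\Omega=\d\,i^*\iota_{\tilde\psi}\omega$, which is the claim.

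The routine ingredients are Cartan's formula and the naturality identity $\iota_V\Phi^*\omega=\Phi^*\iota_{\Phi_*V}\omega$. The step I expect to demand the most care is the first: verifying that the equivariant trivialization genuinely permits treating $\pi^*\omega_{\phi+t\psi}=i_t^*\omega$ as a smooth family of identities on the \emph{fixed} model $\mu^{-1}(\phi)$ with a single quotient map $\pi$, since both sides of the asserted formula are defined only relative to this trivialization. At $k=1$ the computation reduces to the corresponding step in the original Duistermaat--Heckman argument.
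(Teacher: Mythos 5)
Your proof is correct and takes essentially the same route as the paper's: the paper's compressed chain $\pi^*\partial_\psi\omega_\phi = \partial_\psi\pi^*\omega_\phi = \partial_\psi i^*\omega = i^*\L_{\tilde\psi}\omega = i^*\d\iota_{\tilde\psi}\omega = \d\hspace{1pt}i^*\iota_{\tilde\psi}\omega$ is exactly your differentiation of the family of reduction identities $\pi^*\omega_{\phi+t\psi}=i_t^*\omega$ followed by Cartan's formula, closedness of $\omega$, and naturality of contraction along the inclusion. Your unwinding of the trivialization into slice embeddings $i_t$ and the flow $\Theta_t$ simply makes explicit the steps the paper leaves implicit.
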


\begin{proof}
	This follows as
	\begin{align*}
		&\pi^*\partial_\psi \omega_\phi= \partial_\psi\pi^*\omega_\phi= \partial_\psi i^*\omega  \\
		&\hspace{.5cm}= i^*\L_{\tilde\psi}\omega = i^*\d\iota_{\tilde\psi}\omega = \d\hspace{1pt} i^*\iota_{\tilde\psi}\omega,
	\end{align*}
	which is precisely \cite[Equation 2.3]{DuistermaatHeckman82} in the symplectic setting.
\end{proof}

When $(M,\omega)$ is symplectic and the Lie group $G$ is a torus $T$, the term $i^*\iota_{\tilde\psi}\omega$ encodes a connection $1$-form on the $T$-principal bundle $\mu^{-1}(\phi)\to M_\phi$, so that the variation $\partial_\psi\omega_\phi$ arises as a curvature $2$-form on $M_\phi$. The cohomology class of $\partial_\psi\omega_\phi$ is the Chern class $c\in H^2(M,\t)$ of $\mu^{-1}(\phi)\to M_\phi$ and does not depend on the choice of connection $1$-form. See the original derivation \cite[Section 2]{DuistermaatHeckman82} for more details and \cite[Chapter XII]{KobayashiNomizu69} for relevant background on Chern--Weil theory. 

With this in mind, our approach is to introduce suitable auxiliary data on $(M,\omega,T,\mu)$ which will enable us to relate the Chern class of $\mu^{-1}(\phi)\to M_\phi$ to the variations of $[\omega_\phi]$. This will take the form of a \emph{strongly conjugate distribution} $\underline\g^*\subset TM$ to the fundamental distribution $\underline\g$, defined as follows.

\begin{definition}
	Let $(E,\omega)$ be a $k$-plectic vector space. We will say that two subspaces $U$ and $V\subset E$ are \emph{conjugate subspaces} if the pairing
	\begin{align*}
			U\times V	&\rightarrow	\Lambda^{k-1}E^*		\\
			(X,Y)		&\mapsto	\iota_Y\iota_X\omega
	\end{align*}
	is nondegenerate and of rank $1$. In this case, we say that any nonzero element $\eta\in\Lambda^{k-1}E^*$ in the image of this map \emph{conjugates} $U$ and $V$. If, additionally, there is a $\sigma\in\Lambda^2E^*$ such that $\iota_Y\iota_X\omega = \sigma(X,Y)\,\eta$ for every $X\in U$ and $Y\in V$ then we say that $U$ and $V$ are \emph{strongly conjugate subspaces}.
\end{definition}

If $\eta$ conjugates $U$ and $V\subset E$, then 
\[
	\langle X,Y\rangle \,\eta = \iota_Y\iota_X\omega
\]
defines a nondegenerate bilinear pairing $\langle\,,\rangle: U\times V\to\R$. Note that this pairing depends by a factor of $\pm1$ on the order in which $X$ and $Y$ are applied to $\omega$. If $U$ and $V$ are strongly conjugate, with $\iota_Y\iota_X\omega = \sigma(X,Y)\,\eta$, then $\sigma$ extends the pairing $\langle\,,\rangle$ to $E$ in a one-sided manner in the sense that the following diagrams commute.
\begin{center}
	\begin{tikzpicture}
		\node (A) at (0,0) {$U$};
		\node (B) at (2,0) {$V^*$};
		\node (C) at (2,1.5) {$E^*$};

		\node (D) at (0,-.6) {$X$};
		\node (E) at (2,-.6) {$\langle X,\:\rangle$};

		\draw[->] (A) to (B);
		\draw[->] (A) to node[above left] {$\iota\hspace{.5pt}\sigma$} (C);
		\draw[->] (C) to (B);

		\begin{scope}[shift={(5,0)}]
			\node (A) at (2,0) {$V$};
			\node (B) at (0,0) {$U^*$};
			\node (C) at (0,1.5) {$E^*$};

			\node (D) at (2,-.6) {$Y$};
			\node (E) at (0,-.6) {$\langle\;,Y\rangle$};

			\draw[->] (A) to (B);
			\draw[->] (A) to node[above right] {$\iota\hspace{.5pt}\sigma$} (C);
			\draw[->] (C) to (B);
		\end{scope}
	\end{tikzpicture}
\end{center}
Here the diagonal maps $Z\mapsto\iota_Z\sigma$ represent contraction with $\sigma$, and the vertical maps are the natural restrictions. Thus, for example, the resulting pairing of two elements $X,X'\in U$ is given by $\sigma(X,X')$.

\begin{definition}
	Two distributions $U$ and $V\subset TM$ are said to be \emph{conjugate distributions} if there is a closed form $\eta\in\Omega^{k-1}(M)$ which conjugates the fibers of $U$ and $V$ at every point of $M$. They are \emph{strongly conjugate distributions} if there is a $2$-form $\sigma\in\Omega^2(M)$ such that $\iota_Y\iota_X\omega=\sigma(X,Y)\,\eta$ for $X\in U_x$ and $Y\in V_x$ at every $x\in M$.
\end{definition}

If $G$ acts locally freely, so that the fibers of the fundamental distribution $\underline\g$ are linearly isomorphic to $\g$, and if $\eta\in\Omega^{k-1}(M)$ conjugates $\underline\g$ with a distribution $U\subset TM$ then the fibers of $U$ are naturally identified with $\g^*$. In this case, we write $\underline\lambda\in\X(M)$ for the image of $\lambda\in\g^*$ under this identification and we denote $U$ by $\underline\g^*$. Note that the assignment $\lambda\mapsto\underline\lambda$ depends on the choice of both $U$ and $\eta$.

We now specialize to abelian actions. Let $T$ be a torus.

\begin{lemma}\label{lem:multisymplectic_variation_main_lemma}
	Suppose a $T$-equivariant distribution $\underline\t^*\subset TM$ is strongly conjugate to $\underline\t$ on a neighborhood of $\mu^{-1}(\phi)\subset M$ with respect to a $T$-invariant form $\eta\in\Omega^{k-1}(M)$, and suppose that $T$ acts freely on $\mu^{-1}(\phi)$.
	\begin{enumerate}[i.]
		\item There is a $T$-invariant $\alpha\in\Omega^1(M,\t)$ with
			\[
				\iota_\lambda\omega = \alpha_\lambda \wedge\eta,	\hspace{1.5cm}\lambda\in\t^*
			\]
		\item $i^*\alpha\in\Omega^1(\mu^{-1}(\phi),\t)$ is a connection $1$-form on the $T$-principal bundle $\mu^{-1}(\phi)\to M_\phi$,
		\item $i^*\eta = \pi^*\eta_\phi$ for a unique, closed $\eta_\phi\in\Omega^{k-1}(M_\phi)$, and
		\item $\partial_\lambda[\omega_\phi] = \langle c,\lambda\rangle \wedge [\eta_\phi]$, where $c$ is the Chern class of $\mu^{-1}(\phi)\to M_\phi$, and $\partial_\lambda\in\X(\Omega^{k-1}(M,\t^*))$ is tangent to the affine action of $\t^*$ on $\Omega^{k-1}(M)$ given by $\lambda+\psi = \lambda\wedge\eta+\psi$.
	\end{enumerate}
\end{lemma}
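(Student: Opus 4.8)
The plan is to reduce the statement to a Chern--Weil computation on the principal bundle $\mu^{-1}(\phi)\to M_\phi$, taking the variational formula of Lemma \ref{lem:general_variation} as the point of entry. Parts i--iii are scaffolding: part i produces a $\t$-valued $1$-form $\alpha$ that plays the role of the symplectic primitive $\iota_{\tilde\psi}\omega$, part ii identifies its restriction as a connection, and part iii extracts the descended factor $\eta_\phi$. Part iv then follows by feeding these ingredients into Lemma \ref{lem:general_variation} along the affine direction $\lambda\wedge\eta$ and recognizing the resulting curvature.

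For part i, I would build $\alpha$ from the auxiliary $2$-form $\sigma$ furnished by strong conjugacy, first averaging $\sigma$ over $T$ so that it is $T$-invariant (possible since $\omega$ and $\eta$ are $T$-invariant and $\underline\t^*$ is $T$-equivariant), and taking $\alpha_\lambda$ to be the $1$-form extracted from $\iota_\lambda\sigma$ via the one-sided extension recorded after the definition of strong conjugacy. The content is the pointwise identity $\iota_\lambda\omega=\alpha_\lambda\wedge\eta$, i.e.\ that $\iota_\lambda\omega$ lies in the ideal generated by $\eta$; here the nondegeneracy of $\omega$ together with the strong conjugacy relation $\iota_\lambda\iota_\xi\omega=\sigma(\underline\xi,\underline\lambda)\,\eta=\langle\xi,\lambda\rangle\,\eta$ should pin $\iota_\lambda\omega$ down on $\underline\t$ and force the remaining legs to factor through $\eta$. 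Granting part i, contracting $\iota_\lambda\omega=\alpha_\lambda\wedge\eta$ with $\underline\xi$ gives
\[
	\alpha_\lambda\wedge\iota_\xi\eta=\big(\iota_\xi\alpha_\lambda+\langle\xi,\lambda\rangle\big)\,\eta,
\]
and contracting once more with $\underline\xi$ yields $\langle\xi,\lambda\rangle\,\iota_\xi\eta=0$; letting $\lambda$ vary shows $\iota_\xi\eta=0$ for all $\xi\in\t$, so $\eta$ is horizontal on a neighborhood of $\mu^{-1}(\phi)$ and the displayed identity collapses to $\iota_\xi\alpha_\lambda=-\langle\xi,\lambda\rangle$. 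This last relation, together with the $T$-invariance of $\alpha$, is precisely the assertion that $i^*\alpha$ reproduces the fundamental vector fields (up to the sign in the convention $\underline\xi_x=\tfrac{\d}{\d t}e^{-t\xi}x|_{t=0}$) and is $T$-invariant, hence a connection $1$-form on $\mu^{-1}(\phi)\to M_\phi$, which is part ii.

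For part iii, $i^*\eta$ is $T$-invariant because $\eta$ is, and horizontal by the previous step, hence $T$-basic, so it descends to a unique $\eta_\phi\in\Omega^{k-1}(M_\phi)$ with $i^*\eta=\pi^*\eta_\phi$; closedness of $\eta_\phi$ follows from $\pi^*\d\eta_\phi=i^*\d\eta=0$ and the injectivity of $\pi^*$. For part iv, I invoke Lemma \ref{lem:general_variation} in the direction $\psi=\lambda\wedge\eta$ dictated by the affine action. Since the variation of the cohomology class is independent of the lift, I may compute with any vector field projecting to $\partial_\lambda$, and $\underline\lambda$ is admissible because $\L_\lambda\mu_\xi=\iota_\lambda\d\mu_\xi+\d\iota_\lambda\mu_\xi=\langle\xi,\lambda\rangle\,\eta+\d\iota_\lambda\mu_\xi$ agrees with $(\lambda\wedge\eta)_\xi$ modulo an exact term. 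Substituting parts i and iii,
\[
	\pi^*\partial_\lambda\omega_\phi=\d\,i^*\iota_\lambda\omega=\d\big((i^*\alpha_\lambda)\wedge\pi^*\eta_\phi\big)=(\d\,i^*\alpha_\lambda)\wedge\pi^*\eta_\phi,
\]
using $\d\eta_\phi=0$. For the abelian bundle of part ii the curvature $\d\,i^*\alpha$ is basic and descends to a representative $F_\phi$ of the Chern class $c$, so $\d\,i^*\alpha_\lambda=\pi^*\langle F_\phi,\lambda\rangle$; injectivity of $\pi^*$ gives $\partial_\lambda\omega_\phi=\langle F_\phi,\lambda\rangle\wedge\eta_\phi$ at the level of forms, and passing to cohomology yields $\partial_\lambda[\omega_\phi]=\langle c,\lambda\rangle\wedge[\eta_\phi]$.

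I expect the genuine obstacle to be part i: establishing that $\iota_\lambda\omega$ is divisible by $\eta$ from strong conjugacy alone, since that relation only constrains contractions against $\underline\t$ and must be leveraged, through the nondegeneracy of $\omega$, to control the remaining directions. A secondary point needing care is the justification that $\underline\lambda$ is an admissible lift within the trivialization framework of Lemma \ref{lem:general_variation}, namely that replacing the trivialization's variation field by $\underline\lambda$ alters $i^*\iota\omega$ only by an exact form and hence leaves $\partial_\lambda[\omega_\phi]$ unchanged.
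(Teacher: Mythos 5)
Your proposal follows essentially the same route as the paper's proof, part for part: average the strong-conjugacy form $\sigma$ over $T$ and set $\alpha_\lambda=\iota_\lambda\sigma$; deduce $\alpha(\underline\xi)=-\xi$ to obtain the connection property; descend the basic form $i^*\eta$ to a unique closed $\eta_\phi$; and get part iv by feeding $\iota_\lambda\omega=\alpha_\lambda\wedge\eta$ and $i^*\eta=\pi^*\eta_\phi$ into Lemma \ref{lem:general_variation} and recognizing $\d\, i^*\alpha$ as a Chern--Weil representative of $c$. The only divergences are cosmetic: you derive the horizontality $\iota_\xi\eta=0$ by double contraction of the part i identity, whereas the paper obtains it inside part iii from $\eta\propto\iota_\xi\iota_\lambda\omega$ and $\iota_\xi\iota_\xi=0$; and the paper proves part ii directly from the strong conjugacy relation $\alpha_\lambda(\underline\xi)\,\eta=\iota_\xi\iota_\lambda\omega=-\langle\xi,\lambda\rangle\,\eta$ together with the nonvanishing of $\eta$ on $\mu^{-1}(\phi)$ (freeness), so it does not route part ii through the full strength of part i.

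Concerning the two obstacles you flag: in both cases the paper is silent rather than more complete, so you have not overlooked an argument that exists in the text. The paper's entire proof of part i is exactly the construction you describe; the divisibility $\iota_\lambda\omega=\alpha_\lambda\wedge\eta$ is asserted, not derived. Moreover, your suspicion that it cannot be derived from strong conjugacy plus nondegeneracy alone is correct: on $\R^6$ the constant-coefficient $2$-plectic form $\omega=e^{1}\wedge e^{2}\wedge e^{3}+e^{4}\wedge e^{5}\wedge e^{6}+e^{1}\wedge e^{4}\wedge e^{5}+e^{2}\wedge e^{4}\wedge e^{5}$ is closed and nondegenerate, and $U=\mathrm{span}(e_1)$, $V=\mathrm{span}(e_2)$ are strongly conjugate with respect to $\eta=e^{3}$ and $\sigma=e^{1}\wedge e^{2}$, yet $\iota_{e_2}\omega=-e^{1}\wedge e^{3}+e^{4}\wedge e^{5}$ is not divisible by $e^{3}$ (wedge it with $e^{3}$). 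So the identity of part i must be read as part of what the hypothesis is intended to supply, not as a consequence of the stated definition of strong conjugacy; no leveraging of nondegeneracy will close that gap. Similarly, in part iv the paper simply computes $\d\, i^*\iota_\lambda\omega$ with the conjugate field $\underline\lambda$ and cites Lemma \ref{lem:general_variation}, with no discussion of whether $\underline\lambda$ is an admissible lift of $\partial_\lambda$ for the trivialized family; your observation that $\L_\lambda\mu_\xi=\langle\xi,\lambda\rangle\,\eta+\d\iota_\lambda\mu_\xi$ agrees with $(\lambda\wedge\eta)_\xi$ only modulo an exact term is precisely the point that would need to be settled, and it is care beyond what the paper provides.
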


\begin{proof}
	\begin{enumerate}[i.]
		\item The strong conjugacy condition guarantees a $\sigma\in\Omega^2(M)$ with $\iota_\xi\iota_\lambda\omega = \iota_\xi\iota_\lambda\sigma\wedge\eta$. Since $\t^*$ is equivariant, since $\eta$ and $\omega$ are invariant, and since $T$ is compact, we may assume that $\sigma$ is invariant by averaging over the action of $T$. The form $\alpha\in\Omega^1(M,\t)$ is defined at each $x\in M$ by
			\begin{align*}
				\alpha:	\t^*	&\to		T_x^*M			\\
					\lambda	&\mapsto	\iota_\lambda\sigma_x
			\end{align*}
			under the natural identification of $\t\otimes T_x^*M$ with $\Hom(\t^*,T_x^*M)$.
		\item This follows by the $T$-invariance of $\alpha$, the identity $\underline\xi_x  = -\frac{\d}{\d t}\hspace{1pt}e^{t\xi}\hspace{1pt} x \hspace{1pt}\big|_{t=0}$, and the fact that
			\[
				\langle\alpha(\underline\xi),\lambda\rangle \,\eta= \alpha_\lambda(\underline\xi)\wedge\eta = \iota_\xi\iota_\lambda\omega = -\langle\xi,\lambda\rangle\,\eta
			\]
			implies $i^*\alpha(\underline\xi)=\alpha(\underline\xi)=-\xi$. Note that the freeness of $T$ ensures that $\eta$ is nowhere vanishing on $\mu^{-1}(\phi)$.
		\item Since $\eta_x=\iota_\xi\iota_\lambda\omega_x$ for some $\xi\in\g$ and $\lambda\in\g^*$, we have
			\[
				\iota_\xi i^*\eta_x = i^*\iota_\xi\iota_\xi\iota_\lambda\omega_x = 0,
			\]
			so that $i^*\eta$ descends along $\pi:\mu^{-1}(\phi)\to M_\phi$ by equivariance. The uniqueness and closedness of $\eta_\phi$ both follow from the injectivity of $\pi^*:TM_\phi\to T\mu^{-1}(\phi)$ and the closedness of $i^*\eta$.
		\item Using part ii., part iii., and the closedness of $\eta$, we obtain
			\[
				\d i^*\iota_\lambda\omega = \d i^*(\alpha_\lambda\wedge\eta)
					= \langle\d i^*\alpha,\lambda\rangle \wedge i^*\eta = \pi^*\langle F_\alpha,\lambda\rangle\wedge\pi^*\eta_\phi.
			\]
			By Lemma \ref{lem:general_variation} and the injectivity of $\pi^*$, we conclude that
			\[
				\partial_\lambda[\omega_\phi] = \langle[F_\alpha],\lambda\rangle \wedge [\eta_\phi] = \langle c,\lambda\rangle \wedge [\eta_\phi],
			\]
		where $F_\alpha=\d i^*\alpha\in\Omega^2(M_\phi,\t)$ is the curvature of the connection $1$-form $i^*\alpha$.
	\end{enumerate}
\end{proof}

Consolidating the above developments, we arrive at the main result of this section.

\begin{theorem}[Variation of the multisymplectic reduced space]\label{thm:multisymplectic_variation}
	Let $T$ be a torus, let $(M,\omega,T,\mu)$ be a $k$-plectic Hamiltonian $T$-space, fix a $T$-invariant closed form $\phi\in\Omega^{k-1}(M,\t^*)$ such that $\mu^{-1}(\phi)\subset M$ is an embedded submanifold on which $T$ acts freely, choose an open subset $C\subset\t^*$, let $\eta\in\Omega^{k-1}(M)$ be $T$-invariant, and write $P=C\wedge\eta+\phi$. If
	\begin{enumerate}[i.]
		\item the diagram
			\begin{center}
			\begin{tikzpicture}
				\node (A) at (0,0) {$\mu^{-1}(P)$};
				\node (B) at (2.5,0) {$\mu^{-1}(\phi) \times P$};
				\node (C) at (0,-1.8) {$P$};

				\draw[->] (A) to node[above] {$\sim$} (B);
				\draw[->] (A) to node[left] {$\mu$} (C);
				\draw[->] (B) to node[below right] {$\pi_2$} (C);
			\end{tikzpicture}
			\end{center}
			is a trivialization of a family of $T$-principal bundles modeled on $\mu^{-1}(\phi)$, and
		\item the fundamental distribution $\underline\t$ is strongly conjugate to a distribution $\underline\t^*\subset TM$ with respect $\eta$,
	\end{enumerate}
	then, 
	\[
		\partial_\lambda\, [\omega_\psi] = \langle c,\lambda\rangle \wedge [\eta_\psi],		\hspace{1.5cm}\lambda\in C,\;\psi\in P
	\]
	where $c\in\Omega^2(M_\phi,\t)$ is the Chern class of the model space $\mu^{-1}(\phi)\to M_\phi$.
\end{theorem}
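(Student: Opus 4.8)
The plan is to deduce the theorem from Lemma \ref{lem:multisymplectic_variation_main_lemma} by applying it at each parameter $\psi\in P$ separately, and then to invoke the trivialization of hypothesis i.\ to recognize that the Chern classes produced at the various $\psi$ are all identified with the single class $c$ of the model bundle $\mu^{-1}(\phi)\to M_\phi$. In effect, the theorem is the ``all of $P$'' version of the pointwise statement already established in the lemma, and the work lies in transporting the data along the fibration.

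First I would fix $\psi\in P$, write $\psi=\lambda_0\wedge\eta+\phi$ for the corresponding $\lambda_0\in C$, and verify that the hypotheses of Lemma \ref{lem:multisymplectic_variation_main_lemma} hold with $\phi$ replaced by $\psi$. The restriction of the trivialization in hypothesis i.\ to the fiber over $\psi$ is a $T$-equivariant diffeomorphism $\mu^{-1}(\psi)\overset{\sim}{\longrightarrow}\mu^{-1}(\phi)$; since the family is one of $T$-principal bundles modeled on $\mu^{-1}(\phi)$, this exhibits $\mu^{-1}(\psi)$ as an embedded submanifold on which $T$ acts freely. The strong conjugacy of hypothesis ii.\ is asserted on all of $M$, hence in particular on a neighborhood of $\mu^{-1}(\psi)$, and $\eta$ is $T$-invariant by assumption; after averaging over $T$ the $2$-form $\sigma$ furnished by strong conjugacy, as in the proof of part i.\ of the lemma, we may take $\underline\t^*$ to be $T$-equivariant. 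Lemma \ref{lem:multisymplectic_variation_main_lemma} then yields, for each $\psi\in P$,
\[
	\partial_\lambda[\omega_\psi] = \langle c_\psi,\lambda\rangle\wedge[\eta_\psi], \qquad \lambda\in C,
\]
where $c_\psi\in\Omega^2(M_\psi,\t)$ is the Chern class of $\mu^{-1}(\psi)\to M_\psi$ and $\eta_\psi$ is the descent of $i^*\eta$ supplied by part iii.\ of the lemma.

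The decisive step, which I expect to be the main obstacle, is to identify every $c_\psi$ with the single class $c$ of the model space. The $T$-equivariant fiber diffeomorphism $\mu^{-1}(\psi)\overset{\sim}{\longrightarrow}\mu^{-1}(\phi)$ descends to a diffeomorphism $M_\psi\overset{\sim}{\longrightarrow}M_\phi$ over which it is an isomorphism of $T$-principal bundles, so it carries $c_\psi$ to $c$ under the induced isomorphism on cohomology. The essential point, following the observation of Duistermaat and Heckman that the trivialization is rigid—independent of the original identification $\mu^{-1}(P)\cong\mu^{-1}(\phi)\times P$—is that this transported class is well-defined on the model space and therefore equals the single Chern class $c$ of $\mu^{-1}(\phi)\to M_\phi$, uniformly in $\psi$. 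Substituting $c_\psi=c$ into the displayed formula gives the stated conclusion for all $\psi\in P$ and $\lambda\in C$.
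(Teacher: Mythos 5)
Your proposal is correct and follows essentially the same route as the paper: the paper's proof likewise applies Lemma \ref{lem:multisymplectic_variation_main_lemma} part iv.\ at each parameter and then notes that the Chern class, as an invariant of $T$-principal bundles, is independent of the choice of model space $\mu^{-1}(\psi)\to M_\psi$ for $\psi\in P$. Your additional verifications (that the trivialization transports the hypotheses of the lemma to each fiber $\mu^{-1}(\psi)$, and that the fiber identifications descend to bundle isomorphisms carrying $c_\psi$ to $c$) simply make explicit what the paper leaves implicit.
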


\begin{proof}
	This follows from of Lemma \ref{lem:multisymplectic_variation_main_lemma} part iv.\ and the observation that the Chern form $c\in H^2(M_\phi,\t)$, as an invariant of $T$-principal bundles, does not depend on the choice of model space $\mu^{-1}(\phi)\to M_\phi$ for $\phi\in P$.
\end{proof}


\section{Localization for split Hamiltonian $G$-spaces}\label{sec:localization}

In this section we collect some observations and results relating to the interaction of equivariant localization with multisymplectic geometry. The statements and proofs are straightforward adaptations of corresponding results in symplectic geometry, and can serve as guides for further generalizations. We refer to \cite[Chapter 7]{BerlineGetzlerVergne92} and \cite[Chapter 9]{DwivediHermanJeffreyvandenHurk19} for background on equivariant differential forms.

\begin{lemma}\label{lem:basic_splitting_equivariant_measure}
	If $\mu=\nu\wedge\eta$ is a basic splitting with respect to a Hamiltonian action $G\curvearrowright(M,\omega)$, and if $\omega=\sigma\wedge\eta$ for some $\sigma\in\Omega^2(M)$, then $e^{z(\sigma+\nu)}\eta\in\Omega_\g^*(M)$ is equivariantly closed for all $z\in\C$.
\end{lemma}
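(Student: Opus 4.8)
The plan is to write $e^{z(\sigma+\nu)}\eta$ as a product of equivariantly closed factors and to lean on the earlier proposition that $\omega+\mu$ is equivariantly closed whenever $\eta$ is $G$-horizontal. Set $\gamma=\sigma+\nu$, the equivariant $2$-form $\xi\mapsto\sigma+\nu_\xi$, so that $\gamma\wedge\eta=\omega+\mu=\omega_\g$. Since $\mu=\nu\wedge\eta$ is a \emph{basic} splitting, $\eta$ is closed and $G$-horizontal, whence $(\d_\g\eta)(\xi)=\d\eta-\iota_\xi\eta=0$; thus $\eta$ is equivariantly closed. As $\omega$ and $\eta$ are $G$-invariant and $G$ is compact, I would first average $\sigma$ to an invariant representative, which leaves $\sigma\wedge\eta=\omega$ unchanged and ensures that $e^{z(\sigma+\nu)}\eta$ genuinely lies in the invariant subalgebra $\Omega_\g^*(M)$.

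The key identity is $(\d_\g\gamma)\wedge\eta=0$. This follows at once from the Leibniz rule together with $\d_\g\eta=0$ and the equivariant closedness of $\gamma\wedge\eta=\omega_\g$:
\[
	(\d_\g\gamma)\wedge\eta=\d_\g(\gamma\wedge\eta)-\gamma\wedge\d_\g\eta=\d_\g\omega_\g=0.
\]
Concretely this encodes the cancellations $\d\sigma\wedge\eta=\d\omega=0$, $\d\nu_\xi\wedge\eta=\d\mu_\xi=\iota_\xi\omega$, and $\iota_\xi\sigma\wedge\eta=\iota_\xi\omega$, the last two offsetting each other.

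The remaining computation is short. Because $\gamma$ has even total degree, the exponential $e^{z\gamma}$ --- a finite sum in form degree, entire in $z$ --- satisfies $\d_\g e^{z\gamma}=z(\d_\g\gamma)\wedge e^{z\gamma}$, and the Leibniz rule then gives $\d_\g(e^{z\gamma}\wedge\eta)=z(\d_\g\gamma)\wedge e^{z\gamma}\wedge\eta$. Every term of $e^{z\gamma}$ sits in even form degree, so $\eta$ commutes past $e^{z\gamma}$ with no sign, and the right-hand side may be rewritten as $z(\d_\g\gamma)\wedge\eta\wedge e^{z\gamma}$, which vanishes by the key identity. Hence $e^{z(\sigma+\nu)}\eta$ is equivariantly closed for every $z\in\C$.

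I expect the only real friction to be sign bookkeeping: justifying the antiderivation formula $\d_\g e^{z\gamma}=z(\d_\g\gamma)e^{z\gamma}$ for the even equivariant form $\gamma$, and verifying the Koszul signs that let $\eta$ pass through $e^{z\gamma}$ --- which works precisely because $e^{z\gamma}$ is supported in even form degrees. None of this is conceptually hard, but it is where care is needed.
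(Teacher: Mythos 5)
Your proof is correct and takes essentially the same route as the paper: both arguments rest on the key identity $\d_\g(\sigma+\nu)(\xi)\wedge\eta=(\d\sigma+\d\nu_\xi-\iota_\xi\sigma)\wedge\eta=0$ together with $\d_\g\eta=0$, and then apply the derivation property of $\d_\g$ to powers of $\sigma+\nu$ (the paper sums term by term over $\ell$, you phrase the same computation via the exponential). Your two refinements --- obtaining the key identity by citing the earlier proposition that $\omega+\mu$ is equivariantly closed, and averaging $\sigma$ so that the form genuinely lies in the Cartan model --- are harmless touches that the paper leaves implicit.
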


\begin{proof}
	From $\d_\g\eta=0$ and
	\[
		\d_\g(\sigma+\nu)(\xi)\hspace{1pt}\wedge\eta = (\d\sigma+\d\nu_\xi - \iota_\xi\sigma)\wedge\eta = 0,\hspace{1.1cm}\xi\in\g
	\]
	we obtain
	\[
		\d_\g \big[z^\ell(\sigma+\nu)^\ell\wedge\eta\big] = \ell\hspace{1pt}z^\ell\hspace{1pt}(\sigma+\nu)^{\ell-1}\wedge\hspace{1pt}\d_\g(\sigma+\nu)\hspace{1pt}\wedge\eta =0,	\hspace{1cm}\ell\geq1.
	\]
	The result follows by summing over $\ell\geq0$, since $\d_\g\big[(\sigma+\nu)^0\wedge\eta\big]=0$.
\end{proof}

The following corollary is a adaptation of \cite[Theorem 10.11]{DwivediHermanJeffreyvandenHurk19}. See \cite[Section 7.4]{BerlineGetzlerVergne92} for more general results in the symplectic setting, and \cite[Section 33]{GuilleminSternberg84} for background on the physical context for stationary phase approximations more generally.

\begin{theorem}[Exact stationary phase approximation]
	If $(M,\omega,T,\mu)$ is a $k$-plectic Hamiltonian $T$-space with $M$ compact and $T$ a torus, if $\mu=\nu\wedge\eta$ is a basic splitting with nowhere vanishing $\eta\in\Omega^{k-1}(M)$, if each component $F\subset M$ of the fixed point set of $T$ is tangent to $\ker\eta\subset TM$ in the sense that $\iota_X\eta=0$ for all $X\in TF$, and if $\omega=\sigma\wedge\eta$ for some $\sigma\in\Omega^2(M)$, then
	\[
		\int_M e^{\i\nu_\xi}\,e^{\i\sigma}\eta = \sum_{F\in\mathcal{F}} e^{\i\nu_\xi(F)} \int_F\frac{e^{\i\sigma}}{e_F(\xi)}\,\eta
	\]
	for all generators $\xi\in\t$ of $T$, where $\mathcal{F}$ contains the connected components of the fixed point set of $T$, and where $e_F$ is the equivariant Euler class of $F$.
\end{theorem}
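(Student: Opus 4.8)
The plan is to mirror the classical Atiyah--Bott--Berline--Vergne localization argument, using Lemma \ref{lem:basic_splitting_equivariant_measure} to produce the equivariantly closed form on which localization operates. Specifically, I would fix a generator $\xi\in\t$ and set $z=\i$ in Lemma \ref{lem:basic_splitting_equivariant_measure}, so that $e^{\i(\sigma+\nu)}\eta\in\Omega_\g^*(M)$ is equivariantly closed. Evaluating the equivariant form at $\xi$ gives the integrand $e^{\i\nu_\xi}e^{\i\sigma}\eta$, since the equivariant exponential $e^{\i(\sigma+\nu)}(\xi)=e^{\i\sigma}e^{\i\nu_\xi}$ factors as $\nu_\xi$ is a $0$-form. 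The left-hand side of the claimed identity is therefore the integral over $M$ of (the top-degree part of) an equivariantly closed form, which is precisely the setting in which the abelian localization formula applies.

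With this in place, the core step is to invoke the Berline--Vergne localization theorem for the $T$-action on the compact manifold $M$: for any equivariantly closed form $\beta\in\Omega_\t^*(M)$ one has $\int_M\beta(\xi)=\sum_{F}\int_F \beta(\xi)/e_F(\xi)$, where the sum ranges over the connected components $F$ of the fixed point set and $e_F$ is the equivariant Euler class of the normal bundle of $F$. Applying this to $\beta=e^{\i(\sigma+\nu)}\eta$ yields
\[
	\int_M e^{\i\nu_\xi}e^{\i\sigma}\eta = \sum_{F\in\F}\int_F \frac{e^{\i\nu_\xi|_F}\,e^{\i\sigma|_F}\,\eta|_F}{e_F(\xi)}.
\]
It remains to simplify the fixed-point contributions. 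Since $\nu_\xi$ is constant along each connected component $F$ (because $F$ is $T$-fixed and $\d\nu_\xi|_F$ pairs trivially with $TF$ via $\iota_\xi\omega$ vanishing on $F$), the scalar $e^{\i\nu_\xi}$ factors out as $e^{\i\nu_\xi(F)}$, producing exactly the stated right-hand side $\sum_F e^{\i\nu_\xi(F)}\int_F e^{\i\sigma}\eta/e_F(\xi)$.

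The step requiring genuine care, and the one I expect to be the main obstacle, is justifying that the restriction $i_F^*(e^{\i\sigma}\eta)$ behaves well enough to make the right-hand integrals meaningful, and in particular verifying that the hypothesis $\iota_X\eta=0$ for all $X\in TF$ is the correct condition to control the pullback $\eta|_F$. The tangency assumption ensures that $\eta$ restricts to $F$ in a controlled way rather than vanishing outright or obstructing the integral; I would check carefully that $i_F^*\eta$ and $i_F^*\sigma$ assemble into the claimed fixed-point integrand and that no anomalous contributions from the normal directions spoil the factorization. A secondary technical point is confirming the constancy of $\nu_\xi$ on each $F$ directly from $\d\nu_\xi=\iota_\xi\omega$ together with the tangency hypothesis, since on the fixed set $\underline\xi=0$ forces $\iota_\xi\omega=0$, hence $\d\nu_\xi|_F=0$ and $\nu_\xi$ is locally constant on the connected $F$. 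Once these restriction and constancy facts are secured, the remainder of the argument is a direct transcription of the symplectic exact stationary phase computation.
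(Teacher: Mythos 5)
Your proposal follows the paper's proof: take $z=\i$ in Lemma \ref{lem:basic_splitting_equivariant_measure} to obtain the equivariantly closed form $e^{\i(\sigma+\nu)}\eta$, apply the abelian localization theorem, and factor $e^{\i\nu_\xi(F)}$ out of each fixed-point integral by showing that $\nu_\xi$ is constant on $F$. However, your justification of that last step rests on a false identity. You write $\d\nu_\xi=\iota_\xi\omega$, which cannot hold for $k>1$ (the left side is a $1$-form, the right side a $k$-form); the split moment map satisfies $\d\mu_\xi=\d\nu_\xi\wedge\eta=\iota_\xi\omega$. With the correct relation, the vanishing of $\iota_\xi\omega$ on $F$ gives only $\d\nu_\xi\wedge\eta=0$ there, which does not force $\d\nu_\xi|_{TF}=0$: a $1$-form can wedge to zero against $\eta$ without vanishing. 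This is precisely where the tangency hypothesis and the nowhere-vanishing of $\eta$ must enter. The paper's computation is, for $X\in TF$,
\[
	(X\nu_\xi)\,\eta = \iota_X(\d\nu_\xi\wedge\eta) = \iota_X\iota_\xi\omega = 0,
\]
where the first equality uses $\iota_X\eta=0$ and the last uses $\underline\xi|_F=0$; since $\eta$ is nowhere vanishing, this yields $X\nu_\xi=0$. So you correctly sensed that tangency is needed, but your written derivation bypasses it and is dimensionally inconsistent as stated.

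Two smaller remarks. Your worry that the restriction $i_F^*(e^{\i\sigma}\eta)$ might not behave well is a non-issue: pullbacks of forms are always defined and the localization formula applies verbatim, so the only role of the tangency hypothesis is the constancy computation above. Also, your statement of the localization theorem, with the sum running over the components of the fixed point set of $T$, is valid only because $\xi$ is a generator, so that the zero set of $\underline\xi$ coincides with the fixed point set of $T$; the paper records this observation explicitly, and it is the sole place where the generator hypothesis is used.
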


Here we say that $\xi\in\t$ is a \emph{generator} of $T$ when $\mathrm{exp}(\R\xi)$ is dense in $T$.

\begin{proof}
	Fix $F\in\mathcal{F}$. Since $\underline\xi=0$ on $F$, and since $F$ is tangent to $\ker\eta$, it follows that for all $X\in TF$ we have
	\[
		(X\nu_\xi)\wedge\eta = \iota_X(\d\nu_\xi\wedge\eta) = \iota_X\iota_\xi\omega = 0,
	\]
	so that $X\nu_\xi=0$. Consequently, $\nu_\xi(F)\in\R$ is well defined. Since $\xi$ generates $T$, the vanishing set of $\underline\xi\in\X(M)$ is precisely the fixed point set of $T$, and the result follows by Lemma \ref{lem:basic_splitting_equivariant_measure} and the equivariant localization theorem \cite[Theorem 9.50]{DwivediHermanJeffreyvandenHurk19}.
\end{proof}

Suppose that $\mu=\nu\wedge\eta$ splits, that $\nu^{-1}(0)\subset M$ is an embedded submanifold on which $G$ acts freely, and that $\eta$ conjugates $\underline\g$ with some distribution $\underline\g^*\subset TM|_{\nu^{-1}(0)}$ along $\nu^{-1}(0)$. If additionally $\omega=\sigma\wedge\eta$ for some $\sigma\in\Omega^2(M)$, if the form $\alpha\in\Omega^1(M,\g)|_{\nu^{-1}(0)}$ given at each $x\in\nu^{-1}(0)$ by
	\begin{align*}
		\alpha:	\g^*	&\to		T^*M			\\
			\lambda	&\mapsto	\iota_\lambda\sigma,
	\end{align*}
	under the natural identification of $\g\otimes T_x^*M$ with $\Hom(\g^*,T_x^*M)$, is $G$-equivariant with respect to the adjoint action of $G$ on $\g$, and if $\alpha$ vanishes on $\underline\g^*$, then we will say that $\alpha$ is the (extended) \emph{connection $1$-form associated to $\sigma$}. This terminology is chosen in light of the equivariance of $\alpha$ and the fact that the identity $\alpha_\lambda=\iota_\lambda\sigma$ yields
	\[
		\langle\alpha(\underline\xi),\lambda\rangle = \sigma(\underline\lambda,\underline\xi) = -\langle \xi,\lambda\rangle,
	\]
	from which $\alpha(\underline\xi)=-\xi$ for all $\xi\in\g$. We equip $\g$ with a $G$-invariant metric, which exists by the compactness of $G$, and we endow $\g^*$ with its dual metric.

\begin{lemma}\label{lem:split_moment_map_structure}
	We have
	\begin{enumerate}[i.]
		\item $\iota_\g\eta = \iota_{\g^*}\eta=0$.
		\item $\nu_*\underline\lambda=\lambda$ for all $\lambda\in\g^*$,
		\item $\underline\g^*$ is a normal bundle to $\nu^{-1}(0)\subset M$, and
	\end{enumerate}
	Moreover, if $\omega=\sigma\wedge\eta$ for some $\sigma\in\Omega^2(M)$ with associated connection $1$-form $\alpha\in\Omega^1(M,\g)|_{\nu^{-1}(0)}$, and if $\sigma\in\Omega^2(M)$ descends to $\sigma_0\in\Omega^2(M_0)$, then
	\begin{enumerate}[i.]\setcounter{enumi}{3}
		\item $\d\langle\alpha,\nu\rangle(\underline\xi,\underline\lambda) = \langle\xi,\lambda\rangle$,
		\item $\sigma=\pi^*\sigma_0$ on $A=\ker\alpha$, and
		\item $\sigma=\d\langle\alpha,\nu\rangle$ on $\underline\g\oplus\underline\g^*$.
	\end{enumerate}
\end{lemma}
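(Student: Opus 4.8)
The plan is to prove the six claims of Lemma \ref{lem:split_moment_map_structure} by systematically unwinding the definitions of the conjugate distribution, the extended connection $1$-form $\alpha$, and the split moment map $\mu=\nu\wedge\eta$, each reducing to a short verification at points of $\nu^{-1}(0)$. Throughout I would work pointwise at $x\in\nu^{-1}(0)$ and use the defining relations $\d\nu_\xi\wedge\eta=\iota_\xi\omega$, the strong conjugacy identity $\iota_Y\iota_X\omega=\sigma(X,Y)\,\eta$ for $X\in\underline\g$, $Y\in\underline\g^*$, and the characterization $\alpha(\underline\xi)=-\xi$ already established in the surrounding text. The first three assertions are about the distributions and the map $\nu$ alone; the last three require the factorization $\omega=\sigma\wedge\eta$ and the descent of $\sigma$ to $\sigma_0\in\Omega^2(M_0)$.

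\textbf{Parts i–iii.}
For i, I would observe that $\eta$ conjugates $\underline\g$ with $\underline\g^*$ means $\eta\in\Lambda^{k-1}T_x^*M$ arises in the image of $(X,Y)\mapsto\iota_Y\iota_X\omega$; since $\iota_X\eta$ can be computed from $\iota_X\iota_Y\iota_Z\omega$-type expressions, the horizontality $\iota_\xi\eta=0$ follows just as in the proof of Lemma \ref{lem:multisymplectic_variation_main_lemma} part iii., and the same argument with $\underline\lambda$ in place of $\underline\xi$ gives $\iota_{\g^*}\eta=0$. For ii, I would contract the defining relation: from $\iota_\lambda\omega=\alpha_\lambda\wedge\eta$ and the pairing $\langle\underline\xi,\underline\lambda\rangle\,\eta=\iota_{\underline\lambda}\iota_{\underline\xi}\omega$, together with $\d\nu_\xi\wedge\eta=\iota_\xi\omega$, I extract $\langle\nu_*\underline\lambda,\xi\rangle=\langle\d\nu_\xi,\underline\lambda\rangle=\langle\lambda,\xi\rangle$ for all $\xi$, whence $\nu_*\underline\lambda=\lambda$. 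Part iii is then immediate: ii shows $\nu_*$ maps $\underline\g^*$ isomorphically onto $\g^*=T_0\g^*$, so $\underline\g^*$ is transverse to $\ker\nu_*=T_x\nu^{-1}(0)$ and, having the complementary dimension, is a normal bundle.

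\textbf{Parts iv–vi.}
For iv, I would differentiate $\langle\alpha,\nu\rangle$ using Cartan's formula and the relations $\alpha(\underline\xi)=-\xi$, $\nu(x)=0$ on $\nu^{-1}(0)$, and $\nu_*\underline\lambda=\lambda$ from ii, so that only the term $\langle\alpha(\underline\xi),\underline\lambda\,\nu\rangle=\langle\xi,\lambda\rangle$ survives. For v, the definition of $\sigma_0$ as the descent of $\sigma$ along $\pi$ means $\pi^*\sigma_0$ and $\sigma$ agree on horizontal vectors, i.e.\ on $A=\ker\alpha$, which is precisely the assertion. For vi, I would verify $\sigma=\d\langle\alpha,\nu\rangle$ on the four bracket types drawn from $\underline\g\oplus\underline\g^*$: the pairing $(\underline\xi,\underline\lambda)$ matches by iv against $\sigma(\underline\lambda,\underline\xi)=-\langle\xi,\lambda\rangle$, while the pairings among $\underline\g$ with itself and $\underline\g^*$ with itself require checking that both sides vanish or agree using $\alpha|_{\underline\g^*}=0$ and the equivariance of $\alpha$.

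\textbf{Main obstacle.}
I expect the principal difficulty to lie in part vi, specifically in pinning down the $\sigma(\underline\lambda,\underline\lambda')$ and $\sigma(\underline\xi,\underline\xi')$ components, since the strong conjugacy hypothesis directly controls only the mixed pairing $\iota_{\underline\lambda}\iota_{\underline\xi}\omega$, not the restriction of $\sigma$ to $\underline\g$ or to $\underline\g^*$ separately. Resolving this will require combining the one-sided extension property of $\sigma$ described after the definition of strong conjugacy with the vanishing $\alpha|_{\underline\g^*}=0$ and the equivariance of $\alpha$, and carefully matching these against the corresponding terms in $\d\langle\alpha,\nu\rangle$ obtained from Cartan's formula.
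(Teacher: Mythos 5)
Your proposal follows essentially the same route as the paper's proof: parts i--iii by contracting the conjugacy identity $\iota_\lambda\iota_\xi\omega=\langle\xi,\lambda\rangle\,\eta$ and the moment map relation $\iota_\xi\omega=\d\nu_\xi\wedge\eta$, part iv via Cartan's formula together with equivariance of $\alpha$ and part ii, and parts v--vi from the descent hypothesis and strong conjugacy. The only difference is in part vi, where the paper records just the mixed pairing $\sigma(\underline\xi,\underline\lambda)=\langle\xi,\lambda\rangle=\d\langle\alpha,\nu\rangle(\underline\xi,\underline\lambda)$ and leaves the $\underline\g\times\underline\g$ and $\underline\g^*\times\underline\g^*$ pairings implicit; your plan to check these as well (both sides vanish, using $\alpha|_{\underline\g^*}=0$, equivariance, and $\nu=0$ on $\nu^{-1}(0)$) is sound and in fact slightly more complete than the paper's own argument.
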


\begin{proof}
	First recall that Proposition \ref{prop:split_moment_map_free_action} ensures $\nu^{-1}(0)=\mu^{-1}(0)$, and that $\eta$ is nowhere vanishing on $\nu^{-1}(0)$ since the action of $G$ on $\nu^{-1}(0)$ is free.
	\begin{enumerate}[i.]
		\item This is a consequence of $\iota_\lambda\iota_\xi\omega = \langle\xi,\lambda\rangle\,\eta$.
		\item From part i.\ and the identity $\iota_\xi\omega=\d\nu_\xi\wedge\eta$ we obtain
			\[
				\langle\xi,\lambda\rangle\,\eta = \iota_\lambda\iota_\xi\omega = \langle\xi,\nu_*\underline\lambda\rangle\,\eta.
			\]
		\item Proposition \ref{prop:split_moment_map_free_action} ensures that $\nu^{-1}(0)$ is smooth and the result follows by part ii.
		\item From the $G$-equivariance of $\alpha$ and the equality $\alpha(\underline\lambda)=0$, we obtain $(\L_\xi\alpha)(\underline\lambda) = -\mathrm{ad}_\xi\hspace{1pt}\alpha(\underline\lambda) = 0$. Consequently,
			\[
				\iota_\lambda\L_\xi\langle\alpha,\nu\rangle
					= \langle\L_\xi\alpha(\underline\lambda),\nu\rangle
						+ \langle\alpha(\underline\lambda),\L_\xi\nu\rangle
					= 0,
			\]
			and an application of part ii.\ yields
			\[
				\iota_\lambda\iota_\xi\d\langle\alpha,\nu\rangle
					= \iota_\lambda\L_\xi\langle\alpha,\nu\rangle + \iota_\lambda\d\langle\xi,\nu\rangle
					= \langle\xi,\nu_*\underline\lambda\rangle
					= \langle\xi,\lambda\rangle.
			\]
		\item The restriction $\pi_*|_A:A\to TM_0$ is a linear isomorphism on fibers, $\iota_\g i^*\sigma=0$, and $\pi^*\sigma_0=i^*\sigma$.
		\item Since $\underline\g$ and $\underline\g^*$ are strongly conjugate with respect to $\sigma$ and $\eta$, we have $\sigma(\underline\xi,\underline\lambda)=\langle\xi,\lambda\rangle$ and the result follows by part iv.
	\end{enumerate}
\end{proof}

We apply this lemma to obtain the following structure result.

\begin{proposition}
	If $\omega=\sigma\wedge\eta$ for some $\sigma\in\Omega^2(M)$ with associated connection $1$-form $\alpha\in\Omega^1(M,\g)|_{\nu^{-1}(0)}$, and if $\sigma$ descends to $\sigma_0\in\Omega^2(M_0)$, then
	\[
		TM|_{\nu^{-1}(0)}=A\oplus\underline\g\oplus\underline\g^*
	\]
	where $A=\ker\alpha$. If additionally $\omega=\sigma\wedge\eta$ then, in terms of the splitting $A\oplus(\underline\g\oplus\underline\g^*)$, we have
	\[
		\sigma|_{\nu^{-1}(0)} =
		\begin{pmatrix}
			\pi^*\sigma_0	&\gamma		\\
			-\gamma		&\d\langle\alpha,\nu\rangle|_{\underline\g\oplus\underline\g^*}
		\end{pmatrix}
	\]
	for some $\gamma\in\Gamma\big(\nu^{-1}(0),A^*\otimes(\underline\g\oplus\underline\g^*)^*\big)$.
\end{proposition}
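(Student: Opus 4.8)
The plan is to establish the block-diagonal (or block-structured) form of $\sigma|_{\nu^{-1}(0)}$ by systematically evaluating $\sigma$ on each of the three pairwise combinations of the summands $A$, $\underline\g$, and $\underline\g^*$ coming from the direct-sum decomposition $TM|_{\nu^{-1}(0)}=A\oplus\underline\g\oplus\underline\g^*$. The decomposition itself follows almost immediately from the preceding lemma: part iii.\ of Lemma~\ref{lem:split_moment_map_structure} identifies $\underline\g^*$ as a normal bundle to $\nu^{-1}(0)$, while $A=\ker\alpha$ is transverse to the fiber direction $\underline\g$ (since $\alpha(\underline\xi)=-\xi$ forces $\underline\g\cap A=0$), and a dimension count using the fact that $\alpha(\underline\lambda)=0$ places $\underline\g^*\subset A$'s complement appropriately; the rank bookkeeping closes the sum.

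**Next I would** compute the entries of the matrix. For the lower-right block I simply invoke part vi.\ of the lemma, which states $\sigma=\d\langle\alpha,\nu\rangle$ on $\underline\g\oplus\underline\g^*$, giving the $\d\langle\alpha,\nu\rangle|_{\underline\g\oplus\underline\g^*}$ entry directly. For the upper-left block I invoke part v., which gives $\sigma=\pi^*\sigma_0$ on $A=\ker\alpha$. The genuinely new content is the off-diagonal structure: I must verify that the mixed evaluations $\sigma(A,\underline\g\oplus\underline\g^*)$ are antisymmetric across the diagonal, which is automatic from the antisymmetry of the $2$-form $\sigma$, so that the upper-right block $\gamma$ and the lower-left block $-\gamma$ are genuinely negatives of one another. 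The section $\gamma\in\Gamma\big(\nu^{-1}(0),A^*\otimes(\underline\g\oplus\underline\g^*)^*\big)$ is then \emph{defined} to be exactly this mixed pairing, so its existence requires no argument beyond naming it.

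**The main obstacle**, such as it is, lies in confirming that the three summands are genuinely complementary and that the claimed block entries are consistent — in particular that $A$, $\underline\g$, and $\underline\g^*$ have the right dimensions and intersect trivially pairwise. Here I would argue that $\underline\g$ and $\underline\g^*$ are strongly conjugate with respect to $\eta$ (by hypothesis), so the pairing $\sigma(\underline\xi,\underline\lambda)=\langle\xi,\lambda\rangle$ from part vi.\ is nondegenerate, forcing $\underline\g\cap\underline\g^*=0$; combined with $\underline\g\cap A=0$ (from $\alpha(\underline\xi)=-\xi\neq0$) and $\underline\g^*\cap A=0$ (which follows from the normal-bundle description in part iii.\ together with $\pi_*|_A$ being an isomorphism onto $TM_0$ in part v., since $\underline\g^*$ maps nontrivially under $\nu_*$ by part ii.), the three spaces are independent. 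A rank count then confirms the direct sum exhausts $TM|_{\nu^{-1}(0)}$.

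**I would close** by noting that no degeneracy or nondegeneracy of $\sigma_0$ is needed; the matrix representation is purely a consequence of the bilinear form $\sigma$ respecting the given splitting, with the lemma supplying each of the named blocks. The whole argument is a bookkeeping exercise assembling the parts of Lemma~\ref{lem:split_moment_map_structure} into matrix form.
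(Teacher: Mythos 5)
Your overall strategy---assembling parts iii, v, and vi of Lemma \ref{lem:split_moment_map_structure}---is exactly the paper's proof, which is a one-line citation of those parts. Your treatment of the matrix entries is fine: part v gives the upper-left block, part vi the lower-right, and the off-diagonal pair $(\gamma,-\gamma)$ is just the mixed pairing together with antisymmetry of $\sigma$, requiring no further argument. The problems are concentrated in your argument for the direct sum decomposition, where two of your steps are invalid as written.

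First, the inference ``the pairing $\sigma(\underline\xi,\underline\lambda)=\langle\xi,\lambda\rangle$ is nondegenerate, forcing $\underline\g\cap\underline\g^*=0$'' is a non sequitur: a nondegenerate pairing between two subspaces does not preclude their intersecting---a symplectic form on a plane pairs that plane nondegenerately with itself. The correct reason is part iii itself: $\underline\g$ is tangent to $\nu^{-1}(0)$ (the action preserves the level set), while $\underline\g^*$ is a normal bundle, i.e.\ a complement to $T\nu^{-1}(0)$ in $TM|_{\nu^{-1}(0)}$, so the intersection is trivial. Second, pairwise trivial intersections plus a rank count do not make a triple sum direct: three distinct concurrent lines lying in a plane inside $\R^3$ have pairwise trivial intersections and dimensions summing to $3$, yet span only the plane. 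You also need spanning, which you never establish. Both defects disappear if you argue by nested splittings, which is what the paper's citation of part iii amounts to: $TM|_{\nu^{-1}(0)}=T\nu^{-1}(0)\oplus\underline\g^*$ by part iii, and $T\nu^{-1}(0)=\ker(i^*\alpha)\oplus\underline\g$ because $i^*\alpha$ is a connection form ($\alpha(\underline\xi)=-\xi$). Relatedly, be careful with the meaning of $A$: the extended connection form was \emph{defined} to vanish on $\underline\g^*$, so the literal kernel of $\alpha$ inside $TM|_{\nu^{-1}(0)}$ \emph{contains} $\underline\g^*$, and your claim $A\cap\underline\g^*=0$ is false under that reading. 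It becomes correct only with $A=\ker\alpha\cap T\nu^{-1}(0)$, the horizontal bundle---an interpretation forced anyway by part v, where $\pi_*|_A$ must be defined---and it is this bundle that enters the decomposition.
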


\begin{proof}
	The first assertion follows from part iii.\ of Lemma \ref{lem:split_moment_map_structure}, the second from parts v.\ and vi.
\end{proof}

We now extend a nonabelian localization theorem of Liu, following very closely the derivations of \cite[Lemma 2]{Liu95} and \cite[Proposition 1]{Liu99}. This result was motivated originally in the symplectic setting by previous work of Witten \cite{Witten91} and Jeffrey--Kirwan \cite{JeffreyKirwan95}.

\begin{theorem}[Nonabelian localization]\label{thm:localization}
	Suppose that $(M,\omega,G,\mu)$ is a multisymplectic Hamiltonian $G$-space with $M$ and $G$ compact, that $\mu=\nu\wedge\eta$ is an invariant splitting, that $G$ is connected and acts freely on $\mu^{-1}(0)$, that $\omega=\sigma\wedge\eta$ for some $\sigma\in\Omega^2(M)$ with associated connection $1$-form $\alpha\in\Omega^1(M,\g)|_{\nu^{-1}(0)}$, and that $\sigma$ descends to $\sigma_0\in\Omega^2(M_0)$. Choose $\delta>0$ so that $\nu^{-1}(B_\delta)\cong\nu^{-1}(0)\times B_\delta$ as smooth $G$-manifolds, and let $\lambda:B_\delta\to\g^*$ be the inclusion.

	If $\sigma$ is identified with $\pi^*\sigma_0+\d\langle\alpha,\lambda\rangle$ on $\nu^{-1}(B_\delta)\cong\mu^{-1}(0)\times B_\delta$, then
	\[
		\int_\g e^{-t\|\xi\|^2} \int_M e^{\sigma+\i\nu_\xi}\hspace{1pt}\eta\,\d\xi
			\;=\; (2\pi)^\ell\;|G|\int_{M_0} e^{\sigma_0+t\|F\|^2}\eta_0 \,+\, O(e^{-\delta^2/4t}),
	\]
	where $\ell=\dim G$ and $F\in\Omega^2(M_0,\mathrm{ad}\,\nu^{-1}(0))$ is the curvature of $\alpha$.
\end{theorem}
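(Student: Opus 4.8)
The plan is to emulate Liu's argument \cite{Liu95,Liu99}: first integrate out the Lie algebra to turn the oscillatory factor into a Gaussian concentrating on $\mu^{-1}(0)$, then localize to a tubular neighborhood and perform the remaining fiber integrations using the product structure. First I would apply Fubini and exploit the linearity of $\nu_\xi=\langle\nu,\xi\rangle$ in $\xi$ to evaluate the Gaussian integral over $\g$ pointwise on $M$,
\[
	\int_\g e^{-t\|\xi\|^2}e^{\i\nu_\xi}\,\d\xi = \Big(\tfrac{\pi}{t}\Big)^{\ell/2}e^{-\|\nu\|^2/4t},
\]
so that the left-hand side becomes $(\pi/t)^{\ell/2}\int_M e^\sigma\,e^{-\|\nu\|^2/4t}\,\eta$. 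Since $M$ is compact and $\|\nu\|\geq\delta$ off $\nu^{-1}(B_\delta)$, the Gaussian factor is bounded there by $e^{-\delta^2/4t}$, so that portion of the integral is $O(e^{-\delta^2/4t})$ (the polynomial factor $(\pi/t)^{\ell/2}$ being absorbed), and it remains to analyze the integral over the tube $\nu^{-1}(B_\delta)\cong\mu^{-1}(0)\times B_\delta$.

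On this tube I would invoke Proposition \ref{prop:split_moment_map_free_action} (giving $\nu^{-1}(0)=\mu^{-1}(0)$ and nonvanishing $\eta$) and Lemma \ref{lem:split_moment_map_structure} (giving $\iota_\g\eta=\iota_{\g^*}\eta=0$, so $\eta$ descends to $\eta_0=i^*\eta$), together with the hypotheses $\nu=\lambda$, $\eta=\pi^*\eta_0$, and $\sigma=\pi^*\sigma_0+\d\langle\alpha,\lambda\rangle$. Choosing a metric-orthonormal frame $\{e_a\}$ of $\g$ and writing $\d\langle\alpha,\lambda\rangle=\langle F,\lambda\rangle-\tfrac12\langle[\alpha,\alpha],\lambda\rangle+\textstyle\sum_a\d\lambda_a\wedge\alpha^a$, I would factor $e^\sigma=e^{\pi^*\sigma_0}\,e^{\d\langle\alpha,\lambda\rangle}$ and extract the component of top degree in the $2\ell$ fiber-and-orbit directions. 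The only source of the fiber volume is the $\ell$-th power $\tfrac1{\ell!}\big(\sum_a\d\lambda_a\wedge\alpha^a\big)^\ell=\pm\,\alpha^1\wedge\cdots\wedge\alpha^\ell\wedge\d\lambda_1\wedge\cdots\wedge\d\lambda_\ell$, which simultaneously produces the vertical volume form $\alpha^1\wedge\cdots\wedge\alpha^\ell$ along the $G$-orbits; since all the $\alpha^a$ are thereby consumed, the purely vertical factor $e^{-\frac12\langle[\alpha,\alpha],\lambda\rangle}$ contributes only its constant term, and the surviving integrand is $e^{\pi^*\sigma_0}\,e^{\langle F,\lambda\rangle}\,\alpha^1\wedge\cdots\wedge\alpha^\ell\wedge\d\lambda\wedge\pi^*\eta_0$ up to sign.

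Finally I would carry out the three residual integrations. Integrating the Maurer--Cartan volume $\alpha^1\wedge\cdots\wedge\alpha^\ell$ (recall $\alpha(\underline\xi)=-\xi$) over the $G$-orbits yields $|G|$; extending $B_\delta$ to all of $\g^*$ at the cost of a further $O(e^{-\delta^2/4t})$ error and completing the square in the form-valued source $F$ gives
\[
	\int_{\g^*}e^{\langle F,\lambda\rangle-\|\lambda\|^2/4t}\,\d\lambda = (4\pi t)^{\ell/2}\,e^{t\|F\|^2};
\]
and the remaining horizontal factor integrates over $M_0$ to $\int_{M_0}e^{\sigma_0+t\|F\|^2}\eta_0$. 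Assembling the constants, $(\pi/t)^{\ell/2}(4\pi t)^{\ell/2}=(2\pi)^\ell$, produces the stated identity. The hard part will be this penultimate step, the form-valued Gaussian integral: here the metric image $F^*\in\Omega^2(M_0,\g^*)$ of $F$ is nilpotent in the exterior algebra of $M_0$, so the shift $\lambda\mapsto\lambda-2tF^*$ reduces the integral to a finite sum of ordinary Gaussian moments, and checking that the odd moments cancel while the even moments reassemble into $e^{t\|F\|^2}$, with all signs and orientation conventions consistent, is the delicate computational core. This is precisely where the bookkeeping of \cite[Lemma 2]{Liu95} and \cite[Proposition 1]{Liu99} must be reproduced.
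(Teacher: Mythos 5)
Your proposal follows the paper's own proof essentially step for step: the paper likewise integrates out $\g$ via the Gaussian identity (Lemma \ref{lem:gaussian_integral} part i., packaged through the heat-kernel integral $I(t)$ in Lemma \ref{lem:localization_lhs}), localizes to the tube $\nu^{-1}(B_\delta)\cong\nu^{-1}(0)\times B_\delta$ with an $O(e^{-\delta^2/4t})$ error, extracts the top fiber-degree part of $e^{\d\langle\alpha,\lambda\rangle}$ so that the $\langle[\alpha\wedge\alpha],\lambda\rangle$ terms drop out and only $e^{\langle\pi^*F,\lambda\rangle}\,\d\mathrm{vol}_G\,\d\mathrm{vol}_{\g^*}$ survives, and concludes with the form-valued Gaussian integral (Lemma \ref{lem:gaussian_integral} part ii.) in Lemma \ref{lem:localization_rhs}. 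The only differences are organizational: the paper compares both sides of the identity to $I(t)$ rather than transforming the left side into the right, and it cites Varadhan's short-time heat-kernel asymptotics where you use a direct Gaussian tail bound, but the mathematical content, including the delicate nilpotent completion of the square in $F$, is the same.
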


\begin{remark}
	The Fourier transform of the measure associated to $\nu_*e^\sigma\eta$ on $\g^*$ is given by
	\[
		\widehat{\nu_*e^\sigma\eta}\,(\xi) = \int_M e^{\sigma-\i\nu_\xi}\hspace{1pt}\eta
	\]
	for $\xi\in\g$. By taking complex conjugates, we see that Theorem \ref{thm:localization} computes the integral of $\widehat{\nu_*e^\sigma\eta}$ over $\g$ with respect to the Gaussian measure $e^{-t\|\xi\|^2}\d\xi$.
\end{remark}

We devote the remainder of this section to proving Theorem \ref{thm:localization}. For all $t>0$ and $\lambda,\tau\in\g^*$, define the heat kernel
\[
	H(t,\lambda,\tau) = \frac{1}{(4\pi t)^{\ell/2}}\,e^{-\|\lambda-\tau\|^2/4t}
\]
and the integral
\[
	I(t) = \int_M H(t,\nu,0) \,e^\sigma\eta.
\]
Following \cite{Liu99}, our approach will be to compare each side of the equality in Theorem \ref{thm:localization} with $I(t)$. We begin with the left-hand side.

\begin{lemma}\label{lem:localization_lhs}
	For all $t>0$,
	\[
		I(t) = (2\pi)^{-\ell} \int_\g e^{-t\|\xi\|^2} \int_M e^{\sigma+\i\nu_\xi}\hspace{1pt}\eta\:\d\xi.
	\]
\end{lemma}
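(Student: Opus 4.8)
The plan is to recognize the heat-kernel factor $H(t,\nu,0)$ as a Gaussian Fourier transform over $\g$ and then to exchange the order of integration. The essential input is the elementary $\ell$-dimensional Gaussian integral: equipping $\g$ with the chosen $G$-invariant metric and $\g^*$ with its dual, so that $\langle\nu,\xi\rangle$ pairs $\nu\in\g^*$ against $\xi\in\g$, a choice of orthonormal basis diagonalizes the quadratic form and reduces the computation to a product of one-dimensional Gaussians, yielding
\[
	\int_\g e^{-t\|\xi\|^2}\,e^{\i\langle\nu,\xi\rangle}\,\d\xi = \Big(\frac{\pi}{t}\Big)^{\ell/2}\,e^{-\|\nu\|^2/4t}
\]
for every $\nu\in\g^*$ and $t>0$, where $\d\xi$ denotes the Lebesgue measure on $\g$ normalized by the metric.

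First I would solve this identity for the Gaussian $e^{-\|\nu\|^2/4t}$ and substitute into the definition $H(t,\nu,0)=(4\pi t)^{-\ell/2}e^{-\|\nu\|^2/4t}$. A direct comparison of the normalizing constants, namely $(4\pi t)^{-\ell/2}\cdot(t/\pi)^{\ell/2} = (4\pi^2)^{-\ell/2} = (2\pi)^{-\ell}$, gives the pointwise identity
\[
	H(t,\nu,0) = (2\pi)^{-\ell}\int_\g e^{-t\|\xi\|^2}\,e^{\i\nu_\xi}\,\d\xi,
\]
where $\nu_\xi=\langle\nu,\xi\rangle\in C^\infty(M)$. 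Inserting this into $I(t)=\int_M H(t,\nu,0)\,e^\sigma\eta$ and recalling that $\i\nu_\xi$ is a $0$-form, so that $e^{\i\nu_\xi}e^\sigma = e^{\sigma+\i\nu_\xi}$, produces the stated right-hand side once the $\g$-integral is pulled outside the integral over $M$.

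The only point requiring care is this interchange of the order of integration, and I do not expect it to pose a genuine obstacle. Since $M$ is compact and $\sigma,\eta$ are smooth, for each fixed $t>0$ the integrand $e^{-t\|\xi\|^2}e^{\sigma+\i\nu_\xi}\eta$ is jointly smooth in $(\xi,x)\in\g\times M$ and is dominated in $\xi$ by the integrable Gaussian $e^{-t\|\xi\|^2}$, uniformly over the compact $M$. Applying Fubini's theorem to each homogeneous component of the inhomogeneous form $e^{\sigma+\i\nu_\xi}\eta$ then legitimizes the exchange, and no further estimate is needed.
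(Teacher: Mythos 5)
Your proposal is correct and follows essentially the same route as the paper: the paper likewise rewrites the heat-kernel factor $H(t,\nu,0)$ via the Gaussian Fourier identity (its Lemma \ref{lem:gaussian_integral} part i., which you re-derive inline), matches the constant $(4\pi t)^{-\ell/2}(t/\pi)^{\ell/2}=(2\pi)^{-\ell}$, and exchanges the order of integration. Your explicit Fubini justification merely makes precise a step the paper leaves implicit.
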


\begin{proof}
	An application of Lemma \ref{lem:gaussian_integral} part i., to follow, yields
	\[
		(4\pi t)^{-\ell/2} \int_M e^{-\|\nu\|^2/4t} e^\sigma\eta
			= (2\pi)^{-\ell} \int_M \int_\g e^{-t\|\xi\|^2+\i\nu_\xi}\,\d\xi \,e^\sigma\eta.
	\]
\end{proof}

\begin{lemma}\label{lem:gaussian_integral}
	If $V$ is an $\ell$-dimensional Euclidean vector space, then for all $y\in V$ and $t>0$,
	\begin{enumerate}[i.]
		\item 
		\[
			e^{-\|y\|^2/4t} = \left(\frac{t}{\pi}\right)^{\ell/2} \int_V e^{-t\|x\|^2+\i\langle x,y\rangle}\,\d x,
		\]
		\item 
		\[
			e^{t\|y\|^2} = (4\pi t)^{-\ell/2} \int_V e^{-\|x\|^2/4t} e^{\langle x,y\rangle}\,\d x.
		\]
	\end{enumerate}
\end{lemma}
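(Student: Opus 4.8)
The plan is to reduce both identities to the one-dimensional case by exploiting the product structure of the Gaussian. Fixing an orthonormal basis $e_1,\ldots,e_\ell$ of $V$ and writing $x=\sum_j x_j e_j$ and $y=\sum_j y_j e_j$, one has $\|x\|^2=\sum_j x_j^2$ and $\langle x,y\rangle=\sum_j x_j y_j$, so in each case the integrand factors as a product over the coordinates and the integral over $V$ splits as a product of $\ell$ integrals over $\R$. Since the prefactors $(t/\pi)^{\ell/2}$ and $(4\pi t)^{-\ell/2}$ are likewise the $\ell$th powers of their one-dimensional analogues, it suffices to establish each formula when $\ell=1$ and take the product.

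For part i.\ I would complete the square in the exponent,
\[
	-t x^2 + \i x y = -t\Big(x - \tfrac{\i y}{2t}\Big)^2 - \frac{y^2}{4t},
\]
and shift the contour of integration from $\R$ to the horizontal line $\R - \i y/2t$. This shift is justified by Cauchy's theorem together with the vanishing of $e^{-t z^2}$ along the vertical segments at $\pm\infty$, and it reduces the integral to the standard value $\int_\R e^{-t x^2}\,\d x = \sqrt{\pi/t}$. Hence the one-dimensional integral equals $e^{-y^2/4t}\sqrt{\pi/t}$; multiplying by $(t/\pi)^{1/2}$ and passing to the product over coordinates yields the claim.

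For part ii.\ the completion of the square is purely real,
\[
	-\frac{x^2}{4t} + x y = -\frac{(x - 2 t y)^2}{4t} + t y^2,
\]
so a real translation $x \mapsto x + 2 t y$ reduces the integral to $\int_\R e^{-x^2/4t}\,\d x = \sqrt{4\pi t}$. The one-dimensional integral is therefore $e^{t y^2}\sqrt{4\pi t}$, and multiplying by $(4\pi t)^{-1/2}$ and taking the product over the $\ell$ coordinates gives the result.

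The only step demanding genuine care is the contour deformation in part i.; once the decay estimate along the vertical sides is in place, both identities follow immediately. I note that part ii.\ may alternatively be recovered from part i.\ by the substitution exchanging the roles of $t$ and $1/4t$ together with the analytic continuation $y \mapsto -\i y$ in the quadratic form, so that in fact a single Gaussian computation underlies both statements.
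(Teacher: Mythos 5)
Your proof is correct and follows essentially the same route as the paper: both parts reduce, after completing the square, to the standard Gaussian integral $\int e^{-\|x\|^2}\,\d x = \pi^{\ell/2}$. The only difference is organizational — the paper works directly in $\ell$ dimensions, performing the imaginary shift as a formal change of variables in the complexification $V^\C$, whereas you factor through the one-dimensional case and justify that shift by an explicit contour deformation, a step the paper leaves implicit.
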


See \cite[Equation 9.A.6]{Weinberg95} and the surrounding discussion for more general results.

\begin{proof}
	Let $\langle\,,\rangle$ denote the $\C$-bilinear extension of the Euclidean structure on $V$ to the complexification $V^\C$ and write $\|z\|^2=\langle z,z\rangle$ for $z\in V^\C$. Notwithstanding our notation, note that $\langle\,,\rangle$ is not a Hermitian structure on $V^\C$.
	\begin{enumerate}[i.]
		\item From
		\[
			t\|x\|^2 - \i\langle x,y\rangle = t\hspace{.5pt}\Big\|x-\frac{\i}{2t}y\Big\|^2 + \frac{1}{4t}\|y\|^2,	\hspace{1cm}x,y\in V
		\]
		we obtain
		\begin{align*}
			\int_V e^{-t\|x\|^2+\i\langle x,y\rangle}\,\d x
				&=	\int_V e^{-t\|x-\i y/2t\|^2} \, e^{-\|y\|^2/4t} \,\d x								\\
				&=	e^{-\|y\|^2/4t} \,t^{-\ell/2} \int_V e^{-\|x'\|^2}\,\d x', \hspace{1cm}x'=t^{1/2}\hspace{1pt}(x-\i y/2t)	\\
				&=	e^{-\|y\|^2/4t} \,(\pi/t)^{\ell/2}.
		\end{align*}
		\item The equality
		\[
			\|x\|^2 - 4t\hspace{1pt}\langle x,y\rangle = \|x-2ty\|^2 - \|2ty\|^2,	\hspace{1cm}x,y\in V
		\]
		yields
		\begin{align*}
			\int_V e^{-\|x\|^2/4t} e^{\langle x,y\rangle}\,\d x
				&= \int_V e^{-\|x-2ty\|^2/4t} \,e^{t\|y\|^2}\,\d x	 			\\
				&= e^{t\|y\|^2} (4t)^{\ell/2}\int_V e^{-\|x'\|^2}\,\d x', \hspace{1cm} x'=(x-2ty)/2t^{1/2}	\\
				&= e^{t\|y\|^2} (4\pi t)^{\ell/2}.
		\end{align*}
	\end{enumerate}
\end{proof}

We now consider the right-hand side. Note that Lemma \ref{lem:split_moment_map_structure} part i.\ implies that $\eta$ is basic.

\begin{lemma}\label{lem:localization_rhs}
	If $\pi^*\sigma_0+\d\langle\alpha,\lambda\rangle$ is identified with $\sigma$ on $\nu^{-1}(B_\delta)\cong\nu^{-1}(0)\times B_\delta$, then
	\[
		I(t) = |G|\int_{M_0} e^{\sigma_0+t\|F\|^2}\eta_0 \;+\; O(e^{-\delta^2/4t})
	\]
	as $t\to0$.
\end{lemma}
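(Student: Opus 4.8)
The plan is to exploit the concentration of the Gaussian factor $H(t,\nu,0)$ near $\nu^{-1}(0)$ and then carry out the integration over the normal directions by means of the Gaussian identity of Lemma~\ref{lem:gaussian_integral} part ii. First I would split the domain of integration at the boundary of $\nu^{-1}(B_\delta)$. Since $M$ is compact and $e^\sigma\eta$ is a fixed smooth form, the integrand is uniformly bounded, while on the complement $M\setminus\nu^{-1}(B_\delta)$ one has $\|\nu\|\geq\delta$ and hence $e^{-\|\nu\|^2/4t}\leq e^{-\delta^2/4t}$. Thus the contribution of $M\setminus\nu^{-1}(B_\delta)$ to $I(t)$ is $O(e^{-\delta^2/4t})$, and up to this error $I(t)$ equals the integral over $\nu^{-1}(B_\delta)\cong\nu^{-1}(0)\times B_\delta$, where $\nu$ is identified with the coordinate $\lambda$ on $B_\delta$.

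Next I would insert the model structure. Under the identification, $\sigma=\pi^*\sigma_0+\d\langle\alpha,\lambda\rangle$ by hypothesis, while $\eta=\pi^*\eta_0$, since $\eta$ is $G$-basic and, by Lemma~\ref{lem:split_moment_map_structure} part i, horizontal with respect to $\underline\g^*$, so that it carries no legs in either the orbit directions or the $B_\delta$ directions. Writing $\d\langle\alpha,\lambda\rangle=\langle\d\alpha,\lambda\rangle+\langle\alpha,\d\lambda\rangle$ and using that even-degree forms commute, I would factor $e^\sigma=e^{\pi^*\sigma_0}\,e^{\langle\d\alpha,\lambda\rangle}\,e^{\langle\alpha,\d\lambda\rangle}$, where $\langle\d\alpha,\lambda\rangle$ is a $\lambda$-dependent two-form on $\nu^{-1}(0)$ and $\langle\alpha,\d\lambda\rangle$ mixes the one-forms $\alpha^a$ on $\nu^{-1}(0)$ with the fiber one-forms $\d\lambda_a$.

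Then I would perform the fiber integration over $B_\delta$. The forms $\d\lambda_a$ occur only in $e^{\langle\alpha,\d\lambda\rangle}=\prod_a(1+\alpha^a\wedge\d\lambda_a)$, since each factor squares to zero and distinct factors commute; extracting the top vertical degree yields $\pm\,\alpha^1\wedge\cdots\wedge\alpha^\ell\wedge\d\lambda_1\wedge\cdots\wedge\d\lambda_\ell$. Extending the surviving Gaussian in $\lambda$ from $B_\delta$ to all of $\g^*$, again at the cost of an $O(e^{-\delta^2/4t})$ error, and applying Lemma~\ref{lem:gaussian_integral} part ii with $y=\d\alpha$, the prefactor $(4\pi t)^{-\ell/2}$ cancels and produces the factor $e^{t\|\d\alpha\|^2}$. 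On the horizontal bundle $A=\ker\alpha$ the curvature $F$ agrees with $\d\alpha$, so this descends to $e^{t\|F\|^2}$, and together with $e^{\pi^*\sigma_0}\pi^*\eta_0$ it forms the pullback of $e^{\sigma_0+t\|F\|^2}\eta_0$. Finally, the remaining factor $\alpha^1\wedge\cdots\wedge\alpha^\ell$ restricts to the invariant volume form along the fibers of $\nu^{-1}(0)\to M_0$, so integration over the fiber contributes $|G|$ and gives $|G|\int_{M_0}e^{\sigma_0+t\|F\|^2}\eta_0$.

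The main obstacle is the bookkeeping in the fiber integration: tracking the signs from reordering $\alpha^1\wedge\cdots\wedge\alpha^\ell\wedge\d\lambda_1\wedge\cdots\wedge\d\lambda_\ell$, checking that the form-valued Gaussian manipulation in Lemma~\ref{lem:gaussian_integral} is legitimate (it is, because the two-form coefficients of $\d\alpha$ commute and the exponential series terminates), and confirming the normalizations under which $\int_G\alpha^1\wedge\cdots\wedge\alpha^\ell=|G|$ and $\|\d\alpha\|^2$ descends exactly to $\|F\|^2$ with respect to the invariant metric on $\g$. These are the same computations carried out by Liu in the symplectic case, and the remaining steps are routine once they are in place.
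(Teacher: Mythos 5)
Your proposal is correct and follows essentially the same route as the paper's proof, which likewise adapts Liu's argument: Gaussian concentration onto $\nu^{-1}(B_\delta)$ with error $O(e^{-\delta^2/4t})$, extraction of the top vertical degree $\d\mathrm{vol}_G\,\d\mathrm{vol}_{\g^*}$ from $e^{\langle\alpha,\d\lambda\rangle}$, fiber integration over $G$ giving $|G|$, and Lemma~\ref{lem:gaussian_integral} part ii.\ producing $e^{t\|F\|^2}$. Your replacement of $\d\alpha$ by $\pi^*F$ via wedging against the full vertical volume form is the same mechanism the paper uses when it discards the $\langle[\alpha\wedge\alpha],\lambda\rangle$ terms as vanishing at top degree.
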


\begin{proof}
	We adapt the argument of \cite[Section 3]{Liu95}. Identifying the fibers of $\nu^{-1}(0)\to M_0$ with $G$ and choosing suitable orientations on $G$ and $\g^*$ yields
	\[
		e^{\langle\alpha,\d\lambda\rangle} = -\d\mathrm{vol}_G\,\d\mathrm{vol}_{\g^*} \;+\:\text{ lower degree terms}.
	\]
	Since $\langle[\alpha\wedge\alpha],\lambda\rangle^j \:e^{\langle \pi^*F,\lambda\rangle} e^{\langle\alpha,\d\lambda\rangle}$ vanishes at top degree for $j>0$, we have
	\[
		e^{\d\langle\alpha,\lambda\rangle}
			=	e^{\langle \pi^*F,\lambda\rangle} \,e^{-\langle[\alpha\wedge\alpha],\lambda\rangle} e^{-\langle\alpha,\d\lambda\rangle}
			=	e^{\langle \pi^*F,\lambda\rangle} \;\d\mathrm{vol}_G\;\d\mathrm{vol}_{\g^*} \;+\; \text{LDT}
	\]
	on $\nu^{-1}(0)\times B_\delta$. Applying Lemma \ref{lem:gaussian_integral} part ii.\ and Varadhan's formula for the short time asymptotics of the heat kernel \cite{Saloff-Coste10}, we obtain
	\begin{align*}
		\int_{\nu^{-1}(B_\delta)} H(t,\nu,0)\,e^{\sigma}\eta
		&=	(4\pi t)^{-\ell/2}\int_{\nu^{-1}(0)} e^{\pi^*\sigma_0}\eta\,\d\mathrm{vol}_G
			\int_{B_\delta} e^{-\|\lambda\|^2/4t}\,e^{\langle\pi^*F,\lambda\rangle}\,\d\mathrm{vol}_{\g^*}	\\
		&=	|G|\int_{M_0} e^{\sigma_0}\eta_0 \,
			\Big[(4\pi t)^{-\ell/2}\!\int_{B_\delta} e^{-\|\lambda\|^2/4t} e^{\langle F,\lambda\rangle}\,\d\mathrm{vol}_{\g^*}\Big]	\\
		&=	|G|\int_{M_0} e^{\sigma_0+t\|F\|^2}\eta_0 \;+\;O(e^{-\delta^2/4t}),
	\end{align*}
	and the result follows as
	\[
		\int_{M\backslash\nu^{-1}(B_\delta)} H(t,\nu,0)\,e^\sigma\eta = O(e^{-\delta^2/4t}).
	\]
\end{proof}

Lemmas \ref{lem:localization_lhs} and \ref{lem:localization_rhs} together yield Theorem \ref{thm:localization}.


\section{Outlook}\label{sec:outlook}

We suggest five directions for further development.

\begin{enumerate}
	\item \textbf{Nondegenerate and generalized reduction.} Perhaps the most immediate question is whether it is possible to obtain precise conditions on a multisymplectic Hamiltonian system under which a reduced premultisymplectic form is nondegenerate. A related problem is to determine those conditions under which a reduced Hamiltonian multivector field is guaranteed to be Hamiltonian. It would be interesting if such conditions could be interpreted in terms of classical field theory or Nambu mechanics.

	In the opposite direction, as observed in Remark \ref{rem:multisymplectic_reduction_remark} the hypotheses of the multisymplectic reduction theorem can be significantly weakened. It would be interesting to see how far this can be taken and what form the associated theory of generalized Hamiltonian systems takes. As a related application, it may be possible to apply these ideas in conjunction with \cite{BursztynMartinezAlbaRubio19,BursztynCavalcantiGualtieri07} to obtain a reduction theory in more general settings. It may also be interesting to investigate foliations tangent to the kernel distribution of an auxiliary component $\eta\in\Omega^{k-1}(M)$ of a split moment map $\mu=\nu\wedge\eta$.

	\item \textbf{Geometry of split moment maps.} Many results of symplectic geometry involve the comparison of the values of the moment map $\mu:M\to\g^*$ at different points of the $M$. For example, the convexity of the image of the moment map \cite{Atiyah82,GuilleminSternberg82a}, the classification of toric symplectic manifolds \cite{Delzant88}, and the Kirwan surjectivity theorem \cite{Kirwan84}. Split Hamiltonian $G$-spaces $(M,\omega,G,\nu\wedge\eta)$ provides a natural setting for the reinterpretation of these statements in the $k$-plectic setting: specifically, in terms of the function $\nu:M\to\g^*$ under various conditions on the auxiliary form $\eta\in\Omega^{k-1}(M)$.

	\item \textbf{Homotopy moment maps and weak moment maps.} Invoking the theory of $L_\infty$-algebras, Callies, Fr\'{e}gier, Rogers, and Zambon have introduced the \emph{homotopy moment map} \cite{CalliesFregierRogersZambon16}. Employing the framework of Lie algebra cohomology, Madsen and Swann have developed the \emph{multi-moment map} \cite{MadsenSwann12,MadsenSwann13}. Generalizing both of these, Herman has introduced the \emph{weak (homotopy) moment map} \cite{Herman18b}. The homotopy moment map and the weak moment map each refine our construction of the multisymplectic comoment map with the addition of a family of functions involving the Lie algebra of the acting group $G$ and the space of differential forms on the underlying manifold $M$. It would be interesting to explore the interaction between these more nuanced approaches to the moment map and the framework of multisymplectic reduction.
	
	\item \textbf{Infinite-dimensional symplectic geometry.} Multisymplectic manifolds first arose in classical field theory through the \emph{multimomentum space}, a finite-dimensional bundle the sections of which form an infinite-dimensional \emph{space of fields}. The solutions of the field equations often possess a canonical symplectic structure \cite{Helein12}. In this setting, the physical situation is modeled by a finite-dimensional multisymplectic and an infinite-dimensional symplectic Hamiltonian $G$-spaces. It would be interesting to determine the extent to which the multisymplectic and symplectic reduction procedures yield equivalent reduced spaces.

	It may also be interesting to investigate the relation between certain infinite-dimensional symplectic manifolds and finite-dimensional multisymplectic manifolds more generally. The case of a symplectic fibration equipped with a volume form on its base, as exhibited in Example \ref{eg:split_Hamiltonian_systems} part ii., and the induced symplectic structure on the associated space of sections presents a natural domain of applications. Interactions between multisymplectic geometry, hydrodynamics, and knot theory have appeared in \cite{MitiSpera19}, which may suggest further interesting applications. General introductions to infinite-dimensional symplectic geometry are presented in \cite{Marsden67,ChernoffMarsden74}. 

	\item \textbf{Quantization.} Approaches to multisymplectic quantization have been advanced by Barron and Seralejahi \cite{BarronSerajelahi17}, Barron and Shafiee \cite{BarronShafiee19}, de Bellis, Samann, and Szabo \cite{DeBellisSamannSzabo10,DeBellisSamannSzabo11}, H{\'e}lein \cite[Section 3]{Helein12}, Rogers \cite{Rogers13}, and Serajelahi \cite{Serajelahi15}. It would be interesting to understand the interaction between these theories and multisymplectic reduction. One natural question, for example, is to determine conditions under which a suitably reformulated Guillemin--Sternberg $[Q,R]=0$ conjecture \cite{GuilleminSternberg82} obtains in the multisymplectic setting.
\end{enumerate}


\subsection*{Acknowledgments}
The author would like to thank Manuel de Le{\'o}n for helpful comments on an early draft of this paper, and Takuya Sakasai for suggesting the topic of generalized Duistermaat--Heckman theorems. The author would also like to acknowledge the support of the East China Normal University, the China Postdoctoral Science Foundation, and the Euler International Mathematical Institute in Saint Petersburg. This work is supported by the Ministry of Science and Higher Education of the Russian Federation, agreement N\textsuperscript{\underline{o}}\hspace{-1.2pt} 075--15--2019--1619.


\bibliography{multisymplectic}
\bibliographystyle{abbrv}

\par
\medskip
\begin{tabular}{@{}l@{}}
	\textsc{Saint Petersburg State University}, \textit{and} \\
	\textsc{Leonhard Euler International Mathematical Institute in Saint Petersburg,} \\
	\textsc{14th Line 29B, Vasilyevsky Island, Saint Petersburg, 199178, Russia} \\[1.5pt]
	\textit{E-mail address}: \texttt{c.blacker@eimi.ru}
\end{tabular}

\end{document}